\documentclass[12pt,a4paper, oneside,reqno]{article}

\usepackage[utf8]{inputenc}
\usepackage[english]{babel}

\usepackage[left=1in,right=1in,top=1in,bottom=1in]{geometry}

\usepackage{amsfonts,amssymb,amsmath,amsthm}

\usepackage[symbol]{footmisc} 
\usepackage{longtable}
\usepackage{enumerate}
\usepackage{cite}
\usepackage[colorlinks=true,urlcolor=blue,linkcolor=black,citecolor=black,filecolor=black]{hyperref}
 
\usepackage{tikz}
\usepackage{graphicx}
\usepackage{comment}
\usepackage{booktabs}
\usepackage{array}
\usepackage{pdflscape}
\usepackage{caption}
\usepackage{fancyhdr} 
\theoremstyle{plain}
\newtheorem{theorem}{Theorem}
\newtheorem{lemma}[theorem]{Lemma}
\newtheorem{proposition}[theorem]{Proposition}
\newtheorem{remark}[theorem]{Remark}

\newtheorem{definition}[theorem]{Definition}
\newtheorem{corollary}[theorem]{Corollary}

\usetikzlibrary{arrows,arrows.meta,backgrounds}

\newtheorem*{theoremA1}{Theorem A1}
\newtheorem*{theoremA2}{Theorem A2}

\newtheorem*{theoremB1}{Theorem B1}
\newtheorem*{theoremB2}{Theorem B2}

\usepackage{fouriernc} 

\begin{document}

\bigskip

\noindent{\Large
The algebraic and geometric classification of \\
  semi-alternative  superalgebras}
 \footnote{The  authors would like to thank the SRMC 
$\big($Sino-Russian Mathematics Center in Peking University, Beijing, China$\big)$ for its hospitality and excellent working conditions, where some parts of this work have been done.
The  work is supported by FCT   2023.08031.CEECIND and UID/00212/2025.}\footnote{The original concept, theoretical framework, and primary results were developed by H.A., I.K. and R.L.; 
while Aristotle (https://aristotle.harmonic.fun/) acted as a non-human research assistant, assisting in error verification and proof simplification.}

 \bigskip

\begin{center}

 {\bf
Hani Abdelwahab\footnote{Mansoura University, Egypt; \ haniamar1985@gmail.com}, 
   Ivan Kaygorodov\footnote{University of Beira Interior, Portugal; \ kaygorodov.ivan@gmail.com} \&
   Roman Lubkov\footnote{Saint Petersburg University, Russia; \ r.lubkov@spbu.ru, romanlubkov@yandex.ru}
}

\end{center}

\noindent {\bf Abstract:}
{\it  
The algebraic and geometric classifications of complex 
$3$-dimensional  semi-alternative  superalgebras are given.
As corollaries, we obtain the algebraic and geometric classifications of 
complex $3$-dimensional $\mathfrak{perm}$, 
associative,  and assosymmetric superalgebras.}

\noindent {\bf Keywords}:
{\it 
assosymmetric algebra, 
semi-alternative  algebra,
superalgebra,
algebraic classification,
geometric classification.}

\noindent {\bf MSC2020}:  
17A30 (primary);
17D15,
14L30 (secondary).

	 \medskip

\tableofcontents

\section*{Introduction}
The algebraic classification (up to isomorphism) of algebras and superalgebras of small dimension in a variety defined by polynomial identities is a classic problem in the theory of non-associative algebras (see, for example, \cite{FFKMN, afm, akh, G62, AKK, AE, kz, FKS, ikm21}). Among others, the classifications of $3$-dimensional right alternative and semi-alternative algebras \cite{AKL26}, simple Bol algebras \cite{FFKMN}, Poisson algebras \cite{afm}, noncommutative Jordan superalgebras \cite{akh} and right alternative superalgebras \cite{AKK}, and of $4$-dimensional alternative algebras \cite{G62}, nilpotent assosymmetric algebras \cite{ikm21} and Malcev superalgebras \cite{AE} have been obtained. Degenerations and geometric properties of varieties of algebras have been studied since the 1970s (see \cite{ben, BC99, GRH, FKS25, ADS26, GRH3} and references in \cite{k23, l24, MS}); in particular, Burde and Steinhoff constructed the graphs of degenerations of $3$- and $4$-dimensional Lie algebras \cite{BC99}, and Grunewald and O'Halloran studied the degenerations of $5$-dimensional nilpotent Lie algebras \cite{GRH}.\medskip

One of the most important generalizations of associative algebras is the class of alternative algebras, defined by the identities
\begin{equation}\label{ident}\big(x,y,z\big) \ = \ - \ \big(x,z,y\big)\mbox{ \ and \ }\big(x,y,z\big) \ = \ \big(y,z,x\big),\end{equation}
where $(x,y,z)=(xy)z-x(yz)$ is the associator; the main example of a non-associative alternative algebra is the octonion algebra discovered in the 19th century, and the variety is under active investigation (see \cite{C24, C25, CLV26, G24, AA, hnt}). The first identity in \eqref{ident} defines right alternative algebras, and the second one defines semi-alternative algebras. The latter identity dates back to a 1933 article by Zorn \cite{Z}; the term ``semi-alternative'' seems to have been first used by Schafer in 1992 \cite{Schafer92}, and these algebras are also called weakly alternative \cite{V93} and assocyclic \cite{AS}. Semi-alternative algebras also generalize assosymmetric algebras, defined by \begin{center} $(x,y,z)=(y,x,z)=(x,z,y)$,\end{center} introduced by Kleinfeld in 1957 \cite{K57} and studied by Dzhumadildaev  \cite{DZA}.\medskip

Outcalt proved that each simple or primitive semi-alternative ring is alternative \cite{065}, and Sterling proved the same for semi-alternative rings without non-zero ideals of zero square \cite{S68}. Radical properties of semi-alternative rings were discussed by Pokrass \cite{P78}. Kleinfeld proved that rings satisfying $(x,x,y)=(x,y,x)=(y,x,x)$ are semi-alternative \cite{K88}, and Kleinfeld and Widmer proved that semi-alternative rings satisfy $2(x,x,x)^2=0$ and $((y,x,x),x,x)=0$ \cite{KW}. Schafer proved that each solvable semi-alternative algebra is nilpotent and that the Wedderburn principal theorem fails for semi-alternative algebras \cite{S68}. Vakhitov proved that each semi-alternative nil-ring is solvable \cite{V93} and described semi-alternative modules over composition algebras \cite{V94}. The commutator algebra of a semi-alternative algebra is a binary Lie algebra, and Arenas and Shestakov proved that exceptional binary Lie algebras exist \cite{AS}. The present paper continues the series of papers \cite{AKK,akh,ADS26} on $3$-dimensional superalgebras (for other super-generalizations of non-associative varieties, see \cite{FKS25, abr,  BM26}).\medskip

In Section~1 we classify complex $3$-dimensional semi-alternative superalgebras of types $(1,2)$ and $(2,1)$ (Theorems~A1 and~A2) and, as corollaries, $\mathfrak{perm}$, associative and assosymmetric superalgebras. In Section~2 we find the irreducible components and the rigid superalgebras of the corresponding varieties (Theorems~B1 and~B2 and Section~\ref{sec:subvar}). Throughout the paper, all elements of superalgebras are homogeneous.

\section{The algebraic classification of  superalgebras}

\begin{definition}
A superalgebra $A=A_{\bar{0}}\oplus A_{\bar{1}}$ is called \emph{%
semi-alternative} if for all $x,y,z\in A$
\begin{equation}
(x,y,z)=(-1)^{\bar{x}\bar{y}+\bar{x}\bar{z}}(y,z,x), \label{semi-alt}
\end{equation}%
where $\bar{x},\bar{y},\bar{z}\in \mathbb{Z}_{2}$ denote the
parities of $x,y,z$, respectively.
\end{definition}

\subsection{The method for algebraic classification}

In this section, we establish a straightforward method to obtain the
algebraic classification of the semi-alternative superalgebra structures of
type $\left( n,m\right) $ defined over an arbitrary Malcev superalgebra of
type $\left( n,m\right) $.

The Jordan superproduct and supercommutator are defined as follows:%
\begin{center}
$x\bullet y=\frac{1}{2}\left( xy+(-1)^{\bar{x}\bar{y}}yx\right),$
\qquad
$\lbrack x,y]=\frac{1}{2}\left( xy-(-1)^{\bar{x}\bar{y}}yx\right).$
\end{center}

The superalgebra $\left( \mathrm{{A},}[\cdot ,\cdot ]\right) $ is called the
supercommutator superalgebra of $\mathrm{A}$ and is denoted by $\mathrm{{A}%
^{-}.}$ In the situation considered in this paper the supercommutator superalgebra is a Malcev superalgebra:

\begin{lemma}\label{lem:malcev}
Let $\mathrm{A}$ be a complex $3$-dimensional semi-alternative superalgebra of type $(1,2)$ or $(2,1)$. Then $\mathrm{A}^{-}$ is a Malcev superalgebra.
\end{lemma}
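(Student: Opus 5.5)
The plan is to reduce the statement to a small number of super-identities and check them directly, using the smallness of the odd or even part. Write $\mathrm{A}=\mathrm{A}_{\bar 0}\oplus\mathrm{A}_{\bar 1}$ and set $[x,y]=\frac12(xy-(-1)^{\bar x\bar y}yx)$. The Malcev super-identity is multilinear of degree $4$ in homogeneous arguments. In the super setting it is equivalent, via the Grassmann envelope, to the ordinary Malcev identity for the commutator algebra of $G(\mathrm{A})=G_{\bar0}\otimes \mathrm{A}_{\bar0}\oplus G_{\bar1}\otimes\mathrm{A}_{\bar1}$. Likewise, $\mathrm{A}$ is a semi-alternative superalgebra iff $G(\mathrm{A})$ is a semi-alternative algebra. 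So the first route I would try is entirely general: show that the commutator algebra of any semi-alternative algebra satisfies the Malcev identity. This fails in general: the introduction notes that the commutator algebra is only binary Lie, and exceptional binary Lie algebras exist. So the dimension hypothesis must be used, and the general route only yields binary-Lie-type identities. The actual proof will therefore be a direct check exploiting parity counts.

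First I would write the super-Malcev identity in the form of vanishing of the superjacobian relation
\[
J(x,y,[x,z]) = [J(x,y,z),x]
\]
and its superlinearizations, where $J(x,y,z)$ is the superjacobian. By multilinearity it suffices to check it on basis elements. For each parity pattern of the four arguments, I would express both sides through associators. In any superalgebra, the superjacobian of the supercommutator is a signed sum of six associators, and semi-alternativity \eqref{semi-alt} collapses the cyclic ones. So $J(x,y,z)$ becomes a multiple of a single signed combination such as $(x,y,z)-(-1)^{\bar y\bar z}(x,z,y)$.

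Next I would split by type. In type $(2,1)$ the odd part is one-dimensional, spanned by $f$, and in type $(1,2)$ the even part is one-dimensional, spanned by $e$. Parity forces many products to lie in a one-dimensional graded piece. This makes many associators with repeated arguments vanish or become proportional. For instance, in type $(2,1)$ every associator with two or more odd entries is a multiple of $f$ or lies in $\mathrm{A}_{\bar0}$, and the sign twists in \eqref{semi-alt} force relations such as $(f,f,x)=-(f,x,f)=(x,f,f)$. Combining these with the binary Lie identities, which hold for all semi-alternative superalgebras by the Grassmann-envelope argument, handles every case in which some argument is repeated. By linearity of the Malcev identity, the only remaining cases are those with four distinct basis elements. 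Since $\dim \mathrm{A}=3$, there are no such cases, and every instance of the identity has a repeated basis argument.

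The main obstacle will be making this last reduction rigorous with signs. For a superalgebra, "repeated argument" identities such as $(f,f,f)$-type relations for odd $f$ behave like alternating conditions, not vanishing conditions. The binary Lie super-identities therefore have to be stated in their fully linearized super-form before being specialized. I would first derive, from \eqref{semi-alt}, the linearized binary Lie super-identity on $\mathrm{A}^-$, namely that every subsuperalgebra generated by two elements is Lie. Then I would verify that in dimension $3$, each quadruple of basis elements in the linearized Malcev identity involves at most three distinct elements, two of which carry the repetition. That reduces the check to a binary-type identity, up to the cases where all three basis vectors appear. For those cases I expect to need an explicit, if short, computation with structure constants on the one-dimensional graded piece.
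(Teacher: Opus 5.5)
Your proposal has a genuine gap: the reduction you rely on does not work, and what it leaves over is the whole lemma.

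\textbf{Repeated arguments do not give a binary identity.} The step ``every instance of the identity has a repeated basis argument, hence reduces to a binary-type identity'' confuses having a repeated argument with involving only two distinct elements. The Malcev identity $J(x,y,[x,z])=[J(x,y,z),x]$ already has a repeated argument $x$. It still involves three distinct variables, and it is not a consequence of the binary Lie identities, since Malcev algebras form a proper subvariety of binary Lie algebras. In dimension $3$, the instances in which $x,y,z$ run over three distinct basis vectors are exactly the content of the statement. You defer precisely these cases to an unspecified ``short computation''. Your last paragraph also contradicts the claim, two sentences earlier, that no cases remain.

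\textbf{The binary Lie tool fails in the super setting.} A superalgebra being binary Lie (i.e.\ having a binary Lie Grassmann envelope) does not imply that a subsuperalgebra generated by two homogeneous elements is Lie. The reason is that repeated odd basis elements correspond to distinct elements $g_i\otimes f$ of the Grassmann envelope. The paper itself contains a counterexample. ${\rm S}_{06}^-$ is isomorphic to the non-Lie Malcev superalgebra ${\rm A}_{05}$. That superalgebra is generated by $e_1$ and $f_2$, since $[e_1,f_2]=f_1$, and its super-Jacobian $J(e_1,f_2,f_2)$ is a nonzero multiple of $e_1$. So even the cases with only two distinct basis elements are not settled by a binary Lie argument.

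\textbf{What is actually needed.} The lemma requires a genuine computation in the structure constants. The paper does this by ideal membership. Each cubic polynomial obtained by evaluating the super Sagle identity on basis elements is written as a combination of the quadratic semi-alternative equations in the $13$ (resp.\ $14$) structure constants. The coefficients are linear forms, found by exact linear algebra and checked in Lean. Your observation that the semi-alternative identity collapses $J(x,y,z)$ to a multiple of $(x,y,z)-(-1)^{\bar y\bar z}(x,z,y)$ is correct and could shorten such a computation. It does not replace it.
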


\begin{proof}
Over a field of characteristic $0$, a super-anticommutative superalgebra is a Malcev superalgebra if and only if it satisfies the super version (with the signs given by the Koszul rule) of the Sagle identity
\[
[[x,z],[y,t]]=[[[x,y],z],t]+[[[y,z],t],x]+[[[z,t],x],y]+[[[t,x],y],z]
\]
(see \cite{Sagle}; the super version follows by passing to the Grassmann envelope). Write the multiplication of $\mathrm{A}$ by its structure constants in a homogeneous basis; by the grading, $13$ (resp. $14$) of them may be nonzero for type $(1,2)$ (resp. $(2,1)$). The super semi-alternative identity \eqref{semi-alt} evaluated on basis elements gives a system of quadratic equations in these structure constants, and the super Sagle identity for $\mathrm{A}^{-}$ evaluated on basis elements gives a family of cubic polynomials. A direct computation shows that each of these cubic polynomials is a linear combination of the quadratic equations with coefficients that are linear forms in the structure constants; hence it vanishes for every semi-alternative $\mathrm{A}$. The coefficients were found by exact linear algebra and the resulting identities were checked in Lean~4.
\end{proof}

We do not use (and do not claim) this property for semi-alternative superalgebras of other dimensions or types.

Let $\mathbb{V}=\mathbb{V}_{\bar0}\oplus\mathbb{V}_{\bar1}$ be a complex ${\mathbb{Z}}_{2}$-graded
vector space. A bilinear map%
\begin{equation*}
\theta\colon\mathbb{V}\times \mathbb{V\rightarrow V}
\end{equation*}%
is called super-symmetric if for all $x,y\in \mathbb{V}$ the following graded commutativity condition holds:%
\begin{equation*}
\theta \left( x,y\right) =\left( -1\right) ^{\bar{x}\bar{y}}\theta \left(
y,x\right) .
\end{equation*}%
Moreover, $\theta $ is called super-skew-symmetric if for all $x,y\in \mathbb{V}$ the following graded anticommutativity condition
holds:%
\begin{equation*}
\theta \left( x,y\right) =-\left( -1\right) ^{\bar{x}\bar{y}}\theta \left(
y,x\right) .
\end{equation*}

\begin{definition}
Let $\left( \mathrm{A}, [\cdot, \cdot] \right)$ be a Malcev superalgebra of type $(n,m)$. Define $\mathrm{Z}^{2}(\mathrm{A}, \mathrm{A})$ to be the set of all super-symmetric bilinear maps $\theta\colon \mathrm{A} \times \mathrm{A} \rightarrow \mathrm{A}$ such that for all $x,y,z \in \mathrm{A}$
\begin{align*}
&\theta(\theta(x,y), z) + [\theta(x,y), z] + \theta([x,y], z) + [[x,y], z] \\
&\quad + (-1)^{\bar{x}\bar{y}+\bar{x}\bar{z}}
\bigl( \theta(y, \theta(z,x)) + [y, \theta(z,x)] + \theta(y,[z,x]) + [y,[z,x]] \bigr) \\
&= \theta(x, \theta(y,z)) + [x, \theta(y,z)] + \theta(x,[y,z]) + [x,[y,z]] \\
&\quad + (-1)^{\bar{x}\bar{y}+\bar{x}\bar{z}}
\bigl( \theta(\theta(y,z), x) + [\theta(y,z), x] + \theta([y,z], x) + [[y,z], x] \bigr).
\end{align*}
\end{definition}

Observe that, for $\theta \in \mathrm{Z}^{2}\left( \mathrm{{A},{A}}\right) $%
, if we define a multiplication $\ast _{\theta }$ on $\mathrm{A}$ by

\begin{center}
$x\ast _{\theta }y\ =\ \left[ x,y\right] +\theta \left( x,y\right) $ for all 
$x,y$ in $\mathrm{A}$,
\end{center}
then $\left( \mathrm{{A},\ast _{\theta }}\right) $ is a semi-alternative
superalgebra of type $\left( n,m\right) $. Conversely, if $\left( \mathrm{{A}%
,\cdot }\right) $ is a semi-alternative superalgebra of type $\left(
n,m\right) $, then there exists $\theta \in \mathrm{Z}^{2}\left( \mathrm{{A},%
{A}}\right) $ such that $\left( \mathrm{{A},\cdot }\right) \cong \left( 
\mathrm{{A},\ast _{\theta }}\right)$. To see this, consider the
super-symmetric bilinear map $\theta\colon\mathrm{{A}\times {A}\rightarrow {A}}$
defined by $\theta \left( x,y\right) =x\bullet y$ for all $x,y$ in $\mathrm{A%
}$. Then $\theta \in \mathrm{Z}^{2}\left( \mathrm{{A},{A}}\right) $ and $%
\left( \mathrm{{A},\cdot }\right) \cong \left( \mathrm{{A},\ast _{\theta }}%
\right) $.

\medskip

Now, let $\left( \mathrm{{A},[\cdot ,\cdot ] }\right) $ be a Malcev
superalgebra and $\mathrm{{Aut}\left( {A}\right) }$ be the automorphism
group of $\mathrm{A}$. Then $\mathrm{{Aut}\left( {A}\right) }$ acts on $%
\mathrm{Z}^{2}\left( \mathrm{{A},{A}}\right) $ by 
\begin{equation*}
\left( \theta \ast \phi \right) \left( x,y\right) =\phi ^{-1}\left( \theta
\left( \phi \left( x\right) ,\phi \left( y\right) \right) \right) , 
\end{equation*}%
for $\phi \in \text{Aut}\left( \mathrm{A}\right) $, and $\theta \in \mathrm{Z%
}^{2}\left( \mathrm{{A},{A}}\right) $.

\begin{lemma}
Let $\left( \mathrm{{A},[\cdot, \cdot] }\right) $ be a Malcev superalgebra and $%
\theta ,\vartheta \in \mathrm{Z}^{2}\left( \mathrm{{A},{A}}\right) $. Then $%
\left( \mathrm{{A},\ast _{\theta }}\right) $ and $\left( \mathrm{{A},\ast
_{\vartheta }}\right) $ are isomorphic if and only if there is a $\phi \in 
\mathrm{{Aut}\left( {A}\right) }$ with $\theta \ast \phi =\vartheta $.
\end{lemma}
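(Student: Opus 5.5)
The plan is to use the canonical decomposition of a product into its super-symmetric and super-skew-symmetric parts, and to observe that isomorphisms respect this decomposition. Since $\theta$ and $\vartheta$ are super-symmetric and $[\cdot,\cdot]$ is super-skew-symmetric, the supercommutator of $\ast_\theta$ is
\[
\tfrac12\bigl(x\ast_\theta y-(-1)^{\bar x\bar y}\,y\ast_\theta x\bigr)=[x,y],
\]
and its Jordan superproduct is $\theta(x,y)$; the same holds for $\vartheta$. Here we use that we are over $\mathbb{C}$, so $\tfrac12$ makes sense.

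For the ``if'' direction, suppose $\phi\in\mathrm{Aut}(\mathrm{A})$ satisfies $\theta\ast\phi=\vartheta$, that is, $\phi(\vartheta(x,y))=\theta(\phi(x),\phi(y))$. Because $\phi$ is an automorphism of the Malcev superalgebra, we also have $\phi([x,y])=[\phi(x),\phi(y)]$. Adding the two relations gives $\phi(x\ast_\vartheta y)=\phi(x)\ast_\theta\phi(y)$. Since $\phi$ is an even bijection, it is an isomorphism $(\mathrm{A},\ast_\vartheta)\to(\mathrm{A},\ast_\theta)$.

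For the ``only if'' direction, let $\phi\colon(\mathrm{A},\ast_\vartheta)\to(\mathrm{A},\ast_\theta)$ be a superalgebra isomorphism, so $\phi$ is even and linear. Applying $\phi$ to the supercommutator of $\ast_\vartheta$ and using evenness, the sign $(-1)^{\bar x\bar y}$ is unchanged by $\phi$. We therefore obtain $\phi([x,y])=[\phi(x),\phi(y)]$, which means $\phi\in\mathrm{Aut}(\mathrm{A})$. In the same way, applying $\phi$ to the Jordan superproduct gives $\phi(\vartheta(x,y))=\theta(\phi(x),\phi(y))$, which says exactly $\theta\ast\phi=\vartheta$. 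If the isomorphism runs in the other direction, we replace it by its inverse.

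The only point needing care is that the parity of elements, and hence the Koszul signs, must be preserved. This is why isomorphisms of superalgebras are required to be even maps, and I would state this convention explicitly. Apart from that, the argument is a direct verification with no real obstacle.
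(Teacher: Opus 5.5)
Your argument is correct. The supercommutator and Jordan superproduct of $\ast_\theta$ are exactly $[\cdot,\cdot]$ and $\theta$, which uses $\frac12$ and the evenness of isomorphisms, both of which you note. Hence an even bijection $\phi$ is an isomorphism $(\mathrm{A},\ast_\vartheta)\to(\mathrm{A},\ast_\theta)$ precisely when $\phi\in\mathrm{Aut}(\mathrm{A})$ and $\theta\ast\phi=\vartheta$. The paper states this lemma without proof, but your argument is the standard one behind it: the same decomposition into supercommutator and Jordan parts that the paper uses just before the lemma to write every semi-alternative superalgebra as some $(\mathrm{A},\ast_\theta)$.
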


Hence, we have a procedure to classify the semi-alternative superalgebras
of type $\left( n,m\right) $ associated with a given Malcev superalgebra $%
\left( \mathrm{{A},[\cdot, \cdot] }\right) $ of type $\left( n,m\right) $. It
consists of three steps:

\begin{enumerate}
\item Compute $\mathrm{Z}^{2}\left( \mathrm{{A},{A}}\right) $.

\item Find the orbits of $\mathrm{{Aut}\left( {A}\right) }$ on $\mathrm{Z}%
^{2}\left( \mathrm{{A},{A}}\right) $.

\item Choose a representative $\theta $ from each orbit and then construct
the semi-alternative superalgebra $\left( \mathrm{{A},\ast _{\theta }}%
\right) $.
\end{enumerate}

\bigskip

Let us introduce the following notation. Let

\begin{center}
$e_{1},\ldots ,e_{n},e_{n+1}=f_{1},\ldots,e_{n+m}=f_{m}$
\end{center}
be a fixed basis of a Malcev superalgebra $\left( \mathrm{{A},[\cdot, \cdot] }%
\right) $ of type $\left( n,m\right) $. Define $\mathrm{\Sigma }^{2}\left( 
\mathrm{{A},\mathbb{C}}\right) $ to be the space of all super-symmetric
bilinear forms on $\mathrm{A}$. Then

\begin{center}
$\mathrm{\Sigma }^{2}\left( \mathrm{{A},\mathbb{C}}\right) =\left\langle
\Delta _{i,j}:1\leq i<j\leq n+m\right\rangle \oplus \left\langle \Delta
_{i,i}:1\leq i\leq n\right\rangle ,$
\end{center}
where $\Delta _{i,j}$ is the super-symmetric bilinear form $\Delta _{i,j}\colon%
\mathrm{{A}\times {A}\rightarrow \mathbb{C}}$\ defined by%
\begin{equation*}
\Delta _{i,j}\left( e_{l},e_{t}\right) :=\begin{cases}
1, & \mbox{if } (i,j)=(l,t) \mbox{ and } i\neq j,\\
(-1)^{\bar{e}_{l}\bar{e}_{t}}, & \mbox{if } (i,j)=(t,l) \mbox{ and } i\neq j,\\
1, & \mbox{if } i=l=t \mbox{ and } i=j\leq n,\\
0, & \mbox{otherwise.}
\end{cases}
\end{equation*}%
Now, if $\theta \in \mathrm{Z}^{2}\left( \mathrm{{A},{A}}\right) $, then $%
\theta $ can be uniquely written as $\theta \left( x,y\right) =\underset{i=1}%
{\overset{n+m}{\sum }}B_{i}\left( x,y\right) e_{i}$ where $B_{1},B_{2},\ldots
,B_{n+m}$ is a sequence of super-symmetric bilinear forms on $\mathrm{A}$.
Also, we may write $\theta =\left( B_{1},B_{2},\ldots ,B_{n+m}\right) .$ Let $%
\phi ^{-1}\in \text{Aut}\left( \mathrm{A}\right) $ be given by the matrix $%
\left( b_{ij}\right) $. If

\begin{center}
$\left( \theta \ast \phi \right) \left( x,y\right) =\underset{i=1}{\overset{n+m%
}{\sum }}B_{i}^{\prime }\left( x,y\right) e_{i}$, then $B_{i}^{\prime }=%
\underset{j=1}{\overset{n+m}{\sum }}b_{ij}\phi ^{t}B_{j}\phi $.
\end{center}

\subsection{Semi-alternative superalgebras of type \texorpdfstring{$(1,2)$}{(1,2)}}

We now apply the above procedure to classify semi-alternative superalgebras of type $(1,2)$. In this case, the underlying Malcev superalgebra is $3$-dimensional of type $(1,2)$. The classification of $3$-dimensional Malcev superalgebras of type $(1,2)$ is known; see \cite{akh,AE}.

\begin{proposition}[Theorem~20 in \cite{akh}]\label{3-Malcev(1,2)}
Let $\mathrm{A}$ be  a nontrivial $3$-dimensional Malcev superalgebra of type $\left(
1,2\right) $. Then $\mathrm{A}$ is isomorphic to one of the following
superalgebras:

\begin{longtable}{|l|>{\raggedright\arraybackslash}p{0.78\textwidth}|}
\hline
$\mathbb{A}$ & \mbox{multiplication table} \\
\hline
\hline
\endhead
${\rm A}_{01}$ & $[f_{1},f_{2}]=e_{1}$ \\
\hline
${\rm A}_{02}$ & $[f_{2},f_{2}]=e_{1}$ \\
\hline
${\rm A}_{05}$ & $[e_{1},f_{2}]=f_{1}$, $[f_{1},f_{2}]=e_{1}$ \\
\hline
${\rm A}_{06}$ & $[e_{1},f_{2}]=f_{1}$ \\
\hline
${\rm A}_{07}$ & $[e_{1},f_{2}]=f_{1}$, $[f_{2},f_{2}]=e_{1}$ \\
\hline
${\rm A}_{08}^{\alpha}$ & $[e_{1},f_{1}]=f_{1}$, $[e_{1},f_{2}]=\alpha f_{2}$ \\
\hline
${\rm A}_{17}$ & $[e_{1},f_{1}]=f_{1}$, $[e_{1},f_{2}]=f_{1}+f_{2}$ \\
\hline
\end{longtable}
\addtocounter{table}{-1}

\end{proposition}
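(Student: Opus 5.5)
The plan is to classify directly, since the proposition is a cited classification of nontrivial $3$-dimensional Malcev superalgebras of type $(1,2)$. Take a basis $e_1$ (even) and $f_1,f_2$ (odd). By parity, the bracket is determined by three pieces of data. The first is $[e_1,e_1]=0$, forced by skew-symmetry. The second is $[e_1,f_i]=\sum_j a_{ij}f_j$, that is, a linear operator $D=\operatorname{ad}e_1$ on $\mathrm{A}_{\bar1}$. The third is $[f_i,f_j]=b_{ij}e_1$, a symmetric bilinear form $b$ on $\mathrm{A}_{\bar1}$ (the bracket is super-symmetric on odd elements). I will write out the super Malcev (equivalently super Sagle) identity on homogeneous basis triples and quadruples. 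The aim is to reduce it to polynomial conditions on the pair $(D,b)$.

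The key computation is what these conditions become. On quadruples with at least two odd entries, the conditions should say roughly the following:
\begin{itemize}
\item either $b=0$ (then every $D$ is allowed, since the even part acts on an abelian odd part and the identity reduces to trivially satisfied conditions);
\item or $b\neq 0$, and $D$ is constrained by a compatibility with $b$ of the form $b(Dx,y)=\pm b(x,Dy)$, together with a nilpotency or rank restriction coming from terms such as $[[f_i,f_j],f_k]=b_{ij}Df_k$.
\end{itemize}
I expect the identity evaluated on $(f_i,f_j,f_k,f_l)$ to yield expressions like $b_{ij}\,b(Df_k,f_l)$ summed cyclically. With $b\neq0$, these should force $b\circ D$ to be degenerate in a specific way, leaving only the cases where $D$ is nilpotent of rank at most one.

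Next I classify orbits under $GL(\mathrm{A}_{\bar0})\times GL(\mathrm{A}_{\bar1})=\mathbb{C}^*\times GL_2$. This group acts by $D\mapsto \lambda P D P^{-1}$ and $b\mapsto \lambda^{-1}P^{-t}bP^{-1}$ (up to conventions). There are two cases.

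\emph{Case $b=0$.} Here the problem is classifying $D$ up to conjugation and scaling. $D=0$ is the trivial algebra, excluded. A nonzero nilpotent $D$ gives ${\rm A}_{06}$. A diagonalizable $D$ with two eigenvalues, scaled so that one eigenvalue is $1$, gives ${\rm A}_{08}^{\alpha}$. A nontrivial Jordan block with nonzero eigenvalue gives ${\rm A}_{17}$. The identifications $\alpha\sim\alpha^{-1}$ need noting, as does the case $\alpha=0$, which covers a rank-one semisimple $D$.

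\emph{Case $b\neq0$.} If $D=0$, the form $b$ has rank $2$ or $1$, giving $[f_1,f_2]=e_1$ (${\rm A}_{01}$) or $[f_2,f_2]=e_1$ (${\rm A}_{02}$). If $D\neq 0$, the constraints should leave $D$ nilpotent with $Df_2=f_1$, $Df_1=0$. Normalising $b$ within the stabiliser then yields exactly $b=\Delta_{12}$ (${\rm A}_{05}$) or $b=\Delta_{22}$ (${\rm A}_{07}$). Along the way one must check that mixed forms such as $b_{11}\neq0$ are killed by the Malcev constraints.

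The main obstacle is the second step: correctly tracking the Koszul signs in the super Sagle identity, which turns these constraints into the precise compatibility between $D$ and $b$. Since this is Theorem~20 of \cite{akh}, one can alternatively just cite it. To verify independently, I would first test candidate pairs, e.g. $D$ semisimple with $b\neq0$, and check that each violates the identity. After that, the orbit normalisation is routine linear algebra.
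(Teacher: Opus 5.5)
Your route differs from the paper's: the paper gives no argument and simply imports the list from Theorem~20 of \cite{akh}. You instead propose a self-contained classification through the pair $(D,b)$, with $D=\operatorname{ad}e_1$ on $\mathrm{A}_{\bar1}$ and $[x,y]=b(x,y)e_1$ on odd elements. The framework is right, and the cases $b=0$ (giving ${\rm A}_{06}$, ${\rm A}_{08}^{\alpha}$, ${\rm A}_{17}$) and $D=0$ (giving ${\rm A}_{01}$, ${\rm A}_{02}$) are handled correctly. The one step that carries the content is what the super Malcev identity imposes when $b\neq0\neq D$. You only guess this step, and the guess is inaccurate in two places:
\begin{itemize}
\item The all-odd evaluations, $b(x,y)b(Dz,t)-b(y,z)b(Dt,x)+b(z,t)b(Dx,y)-b(t,x)b(Dy,z)=0$, do not force nilpotency. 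For example, $b=\Delta_{22}$ with $D=\operatorname{diag}(1,\alpha)$ satisfies them for every $\alpha$.
\item The constraint is not a compatibility of the form $b(Dx,y)=\pm b(x,Dy)$. The Lie-type relation $b(Dx,y)=-b(x,Dy)$ already fails for ${\rm A}_{05}$, where $b(Df_2,f_2)=b(f_2,Df_2)=1$.
\end{itemize}
As written, the heart of the classification is therefore unverified.

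The mixed-parity evaluations do the work. In the Grassmann envelope, the Sagle identity with one even and three odd arguments gives
\[
b(x,y)D^2z+b(x,z)D^2y+b(Dz,x)Dy-b(Dx,y)Dz=0 ,
\]
and at $x=y=z$ this reads $2b(x,x)D^2x=0$. So if $D^2\neq0$, then $b(x,x)=0$ for all $x$ outside the proper subspace $\ker D^2$. Hence $b(x,x)\equiv0$, so $b=0$. This disposes of every non-nilpotent $D$ at once, with no case split over Jordan forms.

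With two even and two odd arguments the identity gives $b(Dx,Dy)+b(x,D^2y)=0$. For $D^2=0\neq D$, take $Df_2=f_1$, $Df_1=0$; the identity then says exactly $b(f_1,f_1)=0$, which is the step killing $b_{11}$. The evaluations with at most one odd argument impose nothing. For $b=p\Delta_{12}+q\Delta_{22}$ all remaining conditions hold, which is only needed to see that ${\rm A}_{05}$ and ${\rm A}_{07}$ are genuinely Malcev. Finally, if $p\neq0$, the substitution $f_2\mapsto f_2-\frac{q}{2p}f_1$ followed by rescaling gives ${\rm A}_{05}$; if $p=0$ you get ${\rm A}_{07}$. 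With these two identities inserted, your outline becomes a complete proof.
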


\begin{remark}\label{remark:Malcev-12-nonLie}
Among the Malcev superalgebras listed in Proposition~\ref{3-Malcev(1,2)}, 
the superalgebras $\mathrm{A}_{05}$ and $\mathrm{A}_{07}$ are non-Lie 
Malcev superalgebras, whereas $\mathrm{A}_{01}$, $\mathrm{A}_{02}$, 
$\mathrm{A}_{06}$, $\mathrm{A}_{08}^{\alpha}$, and $\mathrm{A}_{17}$ are 
Lie superalgebras.
\end{remark}

\subsubsection{The classification Theorem A1}
By Lemma~\ref{lem:malcev}, the supercommutator of a semi-alternative superalgebra of type $(1,2)$ is a
Malcev superalgebra, and by the following lemma the case where the corresponding Malcev
superalgebra is trivial gives rise to associative supercommutative
superalgebras.

\begin{lemma}\label{lem:supercomm}
Let $\mathrm{A}$ be a semi-alternative superalgebra over a field of characteristic different from $2$ and $3$. If $\mathrm{A}^{-}$ is trivial, i.e. $\mathrm{A}$ is supercommutative, then $\mathrm{A}$ is associative.
\end{lemma}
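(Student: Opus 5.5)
The plan is to reduce to an ungraded computation through the Grassmann envelope, and then to show that supercommutativity forces the cyclic sum of associators to vanish while semi-alternativity forces all three cyclic terms to be equal. Together these give $3(x,y,z)=0$.

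\textbf{Step 1 (reduction).} Let $G=G_{\bar0}\oplus G_{\bar1}$ be the Grassmann algebra and $G(\mathrm{A})=G_{\bar0}\otimes \mathrm{A}_{\bar0}\oplus G_{\bar1}\otimes \mathrm{A}_{\bar1}$ the Grassmann envelope. The standard fact is that $\mathrm{A}$ satisfies the super version of a multilinear identity, with Koszul signs, if and only if $G(\mathrm{A})$ satisfies the ordinary identity. Under this correspondence:
\begin{itemize}
\item supercommutativity of $\mathrm{A}$ becomes commutativity of $G(\mathrm{A})$;
\item identity \eqref{semi-alt} becomes $(x,y,z)=(y,z,x)$;
\item super-associativity becomes ordinary associativity.
\end{itemize}
So it suffices to prove the claim for an ordinary commutative algebra $B$ satisfying $(x,y,z)=(y,z,x)$.

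\textbf{Step 2 (cyclic sum in a commutative algebra).} For a commutative algebra $B$ we have
\[
(x,y,z)+(y,z,x)+(z,x,y)=(xy)z-x(yz)+(yz)x-y(zx)+(zx)y-z(xy).
\]
By commutativity, $(xy)z=z(xy)$, $(yz)x=x(yz)$ and $(zx)y=y(zx)$. The six terms therefore cancel in pairs, and the cyclic sum vanishes identically.

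\textbf{Step 3 (conclusion).} The semi-alternative identity applied twice gives $(x,y,z)=(y,z,x)=(z,x,y)$. Substituting into Step 2 yields $3(x,y,z)=0$. Since the characteristic is not $3$, we get $(x,y,z)=0$, so $B=G(\mathrm{A})$ is associative. By Step 1, $\mathrm{A}$ is then an associative superalgebra.

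\textbf{Alternative route.} To avoid the envelope, one can redo Step 2 directly on homogeneous elements. Each product in the cyclic sum is rewritten with $ab=(-1)^{\bar a\bar b}ba$, and one checks that the Koszul signs in the super cyclic sum
\[
(x,y,z)+(-1)^{\bar x(\bar y+\bar z)}(y,z,x)+(-1)^{\bar z(\bar x+\bar y)}(z,x,y)
\]
make the six terms cancel in pairs. Here \eqref{semi-alt} says the three summands are equal, so the same argument gives $3(x,y,z)=0$.

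\textbf{Main obstacle.} The only real obstacle is the sign bookkeeping in this direct check, or equivalently justifying the envelope transfer of the identities. I expect it to be routine: each exponent that appears is the Koszul sign of a cyclic permutation. Characteristic $\neq 2$ is needed only so that the Jordan product and supercommutator are defined and $\mathrm{A}^-$ being trivial is equivalent to supercommutativity; characteristic $\neq 3$ is used in Step 3.
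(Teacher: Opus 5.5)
Your argument is correct and is essentially the paper's proof. The paper carries out exactly your ``alternative route'': it uses $ab=(-1)^{\bar a\bar b}ba$ to show that the Koszul-signed cyclic sum $(x,y,z)+(-1)^{\bar x\bar y+\bar x\bar z}(y,z,x)+(-1)^{\bar x\bar z+\bar y\bar z}(z,x,y)$ vanishes, then uses \eqref{semi-alt} to make all three summands equal to $(x,y,z)$ and concludes $3(x,y,z)=0$. Your main Grassmann-envelope route is the same computation carried over to the ungraded commutative algebra $G(\mathrm{A})$; it replaces the sign bookkeeping with the standard transfer principle for multilinear identities.
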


\begin{proof}
Using $ab=(-1)^{\bar a\bar b}ba$ we get $(xy)z=(-1)^{\bar x\bar z+\bar y\bar z}z(xy)$, $x(yz)=(-1)^{\bar x\bar y+\bar x\bar z}(yz)x$ and $(-1)^{\bar x\bar z+\bar y\bar z}(zx)y=(-1)^{\bar x\bar y+\bar x\bar z}y(zx)$, whence
\[
(x,y,z)+(-1)^{\bar x\bar y+\bar x\bar z}(y,z,x)+(-1)^{\bar x\bar z+\bar y\bar z}(z,x,y)=0.
\]
By \eqref{semi-alt}, $(x,y,z)=(-1)^{\bar x\bar y+\bar x\bar z}(y,z,x)=(-1)^{\bar x\bar z+\bar y\bar z}(z,x,y)$, so all three summands are equal to $(x,y,z)$. Thus $3(x,y,z)=0$ and $(x,y,z)=0$.
\end{proof}

\noindent Thus, we first recall the classification of nontrivial
$3$-dimensional associative supercommutative superalgebras of type $(1,2)$;
see \cite{AKK}.

\begin{proposition}[Corollary~6 in \cite{AKK}]\label{AC(1,2)}
Let $\mathrm{A}$ be a nontrivial $3$-dimensional associative supercommutative
superalgebra of type $(1,2)$. Then $\mathrm{A}$ is isomorphic to one of
the following superalgebras:
\begin{longtable}{|l|>{\raggedright\arraybackslash}p{0.78\textwidth}|}
\hline
$\mathbb{A}$ & \mbox{multiplication table} \\
\hline
\hline
\endhead
${\rm R}_{04}^{0 }$ & $f_{1}f_{2}= e_{1}$, $f_{2}f_{1}=- e_{1}$ \\
\hline
${\rm R}_{06}^{0 }$ & $e_{1}f_{1}=f_{2}$, $f_{1}e_{1}= f_{2}$ \\
\hline
${\rm R}_{17}$ & $e_{1}e_{1}=e_{1}$, $e_{1}f_{1}=f_{1}$, $f_{1}e_{1}=f_{1}$ \\
\hline
${\rm R}_{19}$ & $e_{1}e_{1}=e_{1}$ \\
\hline
${\rm R}_{24}$ & $e_{1}e_{1}=e_{1}$, $e_{1}f_{1}=f_{1}$, $f_{1}e_{1}=f_{1}$, $e_{1}f_{2}=f_{2}$, $f_{2}e_{1}=f_{2}$ \\
\hline
\end{longtable}
\addtocounter{table}{-1}
\end{proposition}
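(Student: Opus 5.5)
This is a cited classification result (Corollary~6 in \cite{AKK}), so the plan is to reprove it directly by a short structure-theoretic case analysis. Let $\mathrm{A}=\mathrm{A}_{\bar0}\oplus\mathrm{A}_{\bar1}$ with $\mathrm{A}_{\bar0}=\langle e_1\rangle$ and $\mathrm{A}_{\bar1}=\langle f_1,f_2\rangle$. Supercommutativity forces three things. First, $f_if_i=-f_if_i$, so $f_i^2=0$. Second, $f_1f_2=-f_2f_1$ is even, so $f_1f_2=\beta e_1$ for some scalar $\beta$. Third, $e_1f_i=f_ie_1$ are odd, so left multiplication by $e_1$ restricted to $\mathrm{A}_{\bar1}$ is a linear map $L$ of $\langle f_1,f_2\rangle$. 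Finally, $e_1e_1=\lambda e_1$. The data is therefore $(\lambda,\beta,L)$, and I would split into two cases according to whether $\lambda$ vanishes. If $\lambda\neq0$, rescale $e_1$ to get $\lambda=1$.

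In the case $\lambda=1$, associativity of $(e_1,e_1,f_i)$ gives $L^2=L$, so $L$ is a projection. We can then choose a basis of $\mathrm{A}_{\bar1}$ in which $L$ is $0$, or $\mathrm{diag}(1,0)$, or the identity. Next apply $(e_1,f_1,f_2)$: it reads $(e_1f_1)f_2=\beta e_1$. If $L=0$, the left side vanishes, so $\beta=0$, and we get $\mathrm{R}_{19}$. If $L=\mathrm{diag}(1,0)$, we get $(e_1f_1)f_2=f_1f_2=\beta e_1$. On the other hand $e_1(f_1f_2)=\beta e_1$, and comparing with $(f_1, e_1,f_2)$, i.e. $f_1(e_1f_2)=0$, forces $\beta=0$. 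This gives $\mathrm{R}_{17}$. If $L=\mathrm{id}$, then $(f_1,f_2,e_1)$ gives $\beta e_1=\beta e_1$, and we still need one more relation. The relation $(f_1,f_2,f_2)$ yields $\beta e_1 f_2=0$, i.e. $\beta f_2=0$, so $\beta=0$. This gives $\mathrm{R}_{24}$.

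In the case $\lambda=0$, the relation $(e_1,e_1,f_i)$ gives $L^2=0$, so $L=0$ or $L$ is a nonzero nilpotent map. If $L$ is nonzero nilpotent, choose $f_1$ with $Lf_1=f_2$ and $Lf_2=0$. Then $(f_1,f_1,f_2)$ gives $f_1(f_1f_2)=\beta f_1e_1=\beta f_2$, which must equal $(f_1f_1)f_2=0$, so $\beta=0$. This gives $\mathrm{R}_{06}^{0}$. If $L=0$, nontriviality forces $\beta\neq0$, and rescaling gives $\mathrm{R}_{04}^{0}$; all associators vanish in this algebra.

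Pairwise non-isomorphism is read off from invariants: whether $\mathrm{A}_{\bar0}^2$ is nonzero, the rank of $L$, and whether $\mathrm{A}_{\bar1}^2$ is nonzero. The main obstacle is just sign bookkeeping in the associators involving two odd elements. I would check each relation used above with the Koszul signs written out explicitly.
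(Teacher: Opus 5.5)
Your proof is correct. Supercommutativity reduces a type $(1,2)$ structure to the data $(\lambda,\beta,L)$, and this parametrization is exhaustive. The relations $(e_1,e_1,f_i)$ give $L^2=\lambda L$. In each branch, the associativity relation you pick does force $\beta=0$:
\begin{itemize}
\item $(e_1,f_1,f_2)$ when $\lambda=1$ and $L=0$;
\item $(f_1,e_1,f_2)$ when $L=\mathrm{diag}(1,0)$, using $f_1e_1=f_1$ and $e_1f_2=0$;
\item $(f_1,f_2,f_2)$ when $L=\mathrm{id}$;
\item $(f_1,f_1,f_2)$ when $\lambda=0$ and $L\neq 0$ is nilpotent.
\end{itemize}

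The paper gives no argument of its own here. It imports the list as Corollary~6 of \cite{AKK}, i.e.\ as a by-product of the classification of $3$-dimensional right alternative superalgebras carried out there. Your direct case analysis is self-contained and short, because supercommutativity eliminates most structure constants before associativity is used. The citation route is cheap only once that larger classification is available.

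Two small remarks. First, the associativity identity $(xy)z=x(yz)$ carries no Koszul signs. So the ``sign bookkeeping'' you flag as the main obstacle is confined to the relations $f_i^2=0$, $f_2f_1=-f_1f_2$ and $f_ie_1=e_1f_i$, which you have already used. Second, your non-isomorphism invariants are correct: nonvanishing of $\mathrm{A}_{\bar 0}^2$, the rank of $L$, and nonvanishing of $\mathrm{A}_{\bar 1}^2$. However, they go beyond what the statement asserts.
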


\begin{theoremA1}
Let $\mathrm{S}$ be a nontrivial $3$-dimensional semi-alternative
superalgebra of type $\left( 1,2\right) $. Then $\mathrm{S}$ is either one
of the associative supercommutative superalgebras listed in
Proposition~\ref{AC(1,2)}, or it is isomorphic to one of the following
superalgebras\footnote{To obtain the multiplication tables of ${\rm S}_{06}$ and ${\rm S}_{07}$ in the form given below (with fewer non-unit structure constants), we apply the basis change $f_{2}:=\frac{3(1+\alpha)}{4}f_{2}$ to the superalgebras ${\rm S}_{06}$ and ${\rm S}_{07}$ obtained in the proof from $\theta=(\alpha\Delta_{23},-\alpha\Delta_{13},0)$ with $\alpha=\frac{\sqrt{3}i}{3}$ and $\alpha=-\frac{\sqrt{3}i}{3}$, respectively.}:

\begin{longtable}{|l|>{\raggedright\arraybackslash}p{0.78\textwidth}|}
\hline
$\mathbb{A}$ & \mbox{multiplication table} \\
\hline
\hline
\endhead
${\rm S}_{01}^{\alpha}$ & $f_{1}f_{2}=\left( 1+\alpha \right)e_{1}$, $f_{2}f_{1}=\left( 1-\alpha \right) e_{1}$ \\
\hline
${\rm S}_{02}$ & $f_{2}f_{2}=e_{1}$ \\
\hline
${\rm S}_{03}$ & $e_{1}e_{1}=e_{1}$, $e_{1}f_{2}=f_{2}$, $f_{2}e_{1}=f_{2}$, $f_{2}f_{2}=e_{1}$ \\
\hline
${\rm S}_{04}$ & $e_{1}f_{2}=f_{1}$, $f_{2}e_{1}=f_{1}$, $f_{2}f_{2}=e_{1}$ \\
\hline
${\rm S}_{05}$ & $f_{1}f_{2}=e_{1}$, $f_{2}f_{1}=-e_{1}$, $f_{2}f_{2}=e_{1}$ \\
\hline
${\rm S}_{06}$ & $e_{1}f_{2}=f_{1}$, $f_{2}e_{1}=-\frac{1+\sqrt{3}i}{2}f_{1}$, $f_{1}f_{2}=\frac{1+\sqrt{3}i}{2}e_{1}$, $f_{2}f_{1}=e_{1}$ \\
\hline
${\rm S}_{07}$ & $e_{1}f_{2}=f_{1}$, $f_{2}e_{1}=-\frac{1-\sqrt{3}i}{2}f_{1}$, $f_{1}f_{2}=\frac{1-\sqrt{3}i}{2}e_{1}$, $f_{2}f_{1}=e_{1}$ \\
\hline
${\rm S}_{08}^{\alpha }$ & $e_{1}f_{2}=\left( \alpha +1 \right)f_{1}$, $f_{2}e_{1}=\left(\alpha- 1 \right) f_{1}$ \\
\hline
${\rm S}_{09}^{\alpha }$ & $e_{1}f_{2}=\left( \alpha +1 \right)f_{1}$, $f_{2}e_{1}=\left(\alpha- 1 \right) f_{1}$, $f_{2}f_{2}=e_{1}$ \\
\hline
${\rm S}_{10}$ & $e_{1}e_{1}=e_{1}$, $e_{1}f_{1}=f_{1}$, $f_{2}e_{1}=f_{2}$ \\
\hline
${\rm S}_{11}$ & $e_{1}e_{1}=e_{1}$, $e_{1}f_{1}=f_{1}$, $e_{1}f_{2}=f_{2}$, $f_{2}e_{1}=f_{2}$ \\
\hline
${\rm S}_{12}$ & $e_{1}e_{1}=e_{1}$, $f_{1}e_{1}=f_{1}$, $e_{1}f_{2}=f_{2}$, $f_{2}e_{1}=f_{2}$ \\
\hline
${\rm S}_{13}$ & $e_{1}e_{1}=e_{1}$, $e_{1}f_{1}=f_{1}$ \\
\hline
${\rm S}_{14}$ & $e_{1}e_{1}=e_{1}$, $f_{1}e_{1}=f_{1}$ \\
\hline
${\rm S}_{15}$ & $e_{1}e_{1}=e_{1}$, $e_{1}f_{1}=f_{1}$, $e_{1}f_{2}=f_{2}$ \\
\hline
${\rm S}_{16}$ & $e_{1}e_{1}=e_{1}$, $f_{1}e_{1}=f_{1}$, $f_{2}e_{1}=f_{2}$ \\
\hline
\end{longtable}
\addtocounter{table}{-1}
 All listed superalgebras are pairwise non-isomorphic, except ${\rm S}_{01}^{\alpha}\cong {\rm S}_{01}^{-\alpha}$.
\end{theoremA1}

\subsubsection{The proof of Theorem A1}
Let $\mathrm{S}$ be a nontrivial $3$-dimensional semi-alternative superalgebra of type $(1,2)$. If $\mathrm{S}^{-}$ is trivial, then
$\mathrm{S}$ is associative supercommutative, and hence it is one of the
superalgebras listed in Proposition~\ref{AC(1,2)}. Otherwise,
$\mathrm{S}^{-}$ is a nontrivial $3$-dimensional Malcev superalgebra of
type $(1,2)$. By Proposition \ref{3-Malcev(1,2)}, we may assume that
\[
\mathrm{S}^{-} \in \{ \mathrm{A}_{01}, \mathrm{A}_{02}, \mathrm{A}_{05}, \mathrm{A}_{06}, \mathrm{A}_{07}, \mathrm{A}_{08}^{\alpha}, \mathrm{A}_{17} \}.
\]
Thus, we have the following cases:

\begin{enumerate}
\item \underline{$\mathrm{S}^{-}=\mathrm{A}_{01}$}. Let $\theta =\big(%
\mathrm{B}_{1},\ \mathrm{B}_{2},\ \mathrm{B}_{3}\big)$ be an arbitrary
element of $\mathrm{Z}^{2}\left( \mathrm{{A}_{01},A_{01}}\right) $. Then $%
\theta \ =\ \left( \alpha \Delta _{23},\ 0,\ 0\right) $ for some $\alpha \in 
\mathbb{C}$. The automorphism group $\mathrm{Aut}\left( \mathrm{A}%
_{01}\right) $ consists of the automorphisms $\phi $ given by matrices of
the following forms: 
\begin{equation*}
\begin{pmatrix}
a_{22}a_{33} & 0 & 0 \\ 
0 & a_{22} & 0 \\ 
0 & 0 & a_{33}%
\end{pmatrix}%
,%
\begin{pmatrix}
a_{23}a_{32} & 0 & 0 \\ 
0 & 0 & a_{23} \\ 
0 & a_{32} & 0%
\end{pmatrix}%
.
\end{equation*}%
For any $\phi \in \mathrm{Aut}\left( \mathrm{A}_{01}\right) $, we have $%
\theta \ast \phi \ \in \left\{ \pm \theta \right\} $. So we get the
superalgebras $\mathrm{S}_{01}^{\alpha }$. Moreover, {$\mathrm{S}%
_{01}^{\alpha }\cong \mathrm{S}_{01}^{-\alpha }$.}

\item \underline{$\mathrm{S}^{-}=\mathrm{A}_{02}$}. Let $\theta =\big(%
\mathrm{B}_{1},\ \mathrm{B}_{2},\ \mathrm{B}_{3}\big)$ be an arbitrary
element of $\mathrm{Z}^{2}\left( \mathrm{{A}_{02},A_{02}}\right) $. Then $%
\theta \in \left\{ \eta _{1},\eta _{2}\right\} $ where%
\begin{eqnarray*}
\ \eta _{1} &=&\left( \alpha _{1}\Delta _{11},\ \alpha _{2}\Delta _{13},\
\alpha _{1}\Delta _{13}\right) , \\
\eta _{2} &=&\left( \alpha _{1}\Delta _{23},\ 0,\ 0\right) ,
\end{eqnarray*}%
for some $\alpha _{1},\alpha _{2}\in \mathbb{C}$. The automorphism group $%
\mathrm{Aut}\left( \mathrm{A}_{02}\right) $ consists of the automorphisms $%
\phi $ given by a matrix of the following form: 
\begin{equation*}
\phi =%
\begin{pmatrix}
a_{11} & 0 & 0 \\ 
0 & a_{22} & a_{23} \\ 
0 & 0 & \pm \sqrt{a_{11}}%
\end{pmatrix}%
.
\end{equation*}%

\begin{itemize}
\item If $\theta = 0$, we obtain the superalgebra $\mathrm{S}_{02}$.

\item $\theta =\eta _{1}$. Let $\phi $ be the first of the following matrices if 
$\alpha _{1}\neq 0$, or the second if $\alpha _{1}=0$ (then $\alpha _{2}\neq 0$, since $\theta\neq 0$):%
\begin{equation*}
\begin{pmatrix}
\frac{1}{\alpha _{1}} & 0 & 0 \\ 
0 & 1 & \alpha _{2}\alpha _{1}^{-\frac{3}{2}} \\ 
0 & 0 & \frac{1}{\sqrt{\alpha _{1}}}%
\end{pmatrix}%
,%
\begin{pmatrix}
1 & 0 & 0 \\ 
0 & \alpha _{2} & 0 \\ 
0 & 0 & 1%
\end{pmatrix}%
.
\end{equation*}%
Then $\theta \ast \phi \in \{\left( \Delta _{11},\ 0,\ \Delta _{13}\right)
,\left( 0,\ \Delta _{13},\ 0\right) \}$. Thus we get the superalgebras $%
\mathrm{S}_{03},\mathrm{S}_{04}$.

\item $\theta =\eta _{2}$. Here $\alpha _{1}\neq 0$, since $\theta\neq 0$. Let $\phi $\ be the following automorphism:%
\begin{equation*}
\phi =%
\begin{pmatrix}
1 & 0 & 0 \\ 
0 & \frac{1}{\alpha _{1}} & 0 \\ 
0 & 0 & 1%
\end{pmatrix}%
.
\end{equation*}%
Then $\theta \ast \phi =\left( \Delta _{23},\ 0,\ 0\right) $. Hence we have
the superalgebra $\mathrm{S}_{05}$.
\end{itemize}

\item \underline{$\mathrm{S}^{-}=\mathrm{A}_{05}$}. Let $\theta =\big(%
\mathrm{B}_{1},\ \mathrm{B}_{2},\ \mathrm{B}_{3}\big)$ be an arbitrary
element of $\mathrm{Z}^{2}\left( \mathrm{{A}_{05},A_{05}}\right) $. Then $%
\theta =\left( \alpha \Delta _{23},-\alpha \ \Delta _{13},\ 0\right) $ where 
$3\alpha ^{2}+1=0$. The automorphism group $\mathrm{Aut}\left( \mathrm{A}%
_{05}\right) $ consists of the automorphisms $\phi $ given by a matrix of
the following form: 
\begin{equation*}
\phi =%
\begin{pmatrix}
a_{11} & 0 & 0 \\ 
0 & \sigma a_{11} & 0 \\ 
0 & 0 & \sigma 
\end{pmatrix}%
:\sigma ^{2}=1.
\end{equation*}%
Let $\phi =\bigl(a_{ij}\bigr)\in $ $\mathrm{Aut}\left( \mathrm{A}%
_{05}\right) .$ Then $\theta \ast \phi \ =\ \theta $. Thus we get the
superalgebras $\mathrm{S}_{06}$ $\big(\alpha=\frac{\sqrt{3}i}{3}\big)$ and $\mathrm{S}_{07}$ $\big(\alpha=-\frac{\sqrt{3}i}{3}\big)$ $\big($after the change of basis $f_{2}:=\frac{3(1+\alpha)}{4}f_{2}\big)$.

\item \underline{$\mathrm{S}^{-}=\mathrm{A}_{06}$}. Let $\theta =\big(%
\mathrm{B}_{1},\ \mathrm{B}_{2},\ \mathrm{B}_{3}\big)$ be an arbitrary
element of $\mathrm{Z}^{2}\left( \mathrm{{A}_{06},A_{06}}\right) $. Then $%
\theta =\left( 0,\alpha \ \Delta _{13},\ 0\right) $ for some $\alpha \in 
\mathbb{C}$. The automorphism group $\mathrm{Aut}\left( \mathrm{A}%
_{06}\right) $ consists of the automorphisms $\phi $ given by a matrix of
the following form: 
\begin{equation*}
\phi =%
\begin{pmatrix}
a_{11} & 0 & 0 \\ 
0 & a_{11}a_{33} & a_{23} \\ 
0 & 0 & a_{33}%
\end{pmatrix}%
.
\end{equation*}%
Let $\phi =\bigl(a_{ij}\bigr)\in $ $\mathrm{Aut}\left( \mathrm{A}%
_{06}\right) .$ Then $\theta \ast \phi \ =\ \theta $. So we get the
superalgebras $\mathrm{S}_{08}^{\alpha }$.  Moreover, $\mathrm{S}
_{08}^{\alpha }\cong \mathrm{S}_{08}^{\beta}$ if and only if $\alpha=\beta$.

\item \underline{$\mathrm{S}^{-}=\mathrm{A}_{07}$}. Let $\theta =\big(%
\mathrm{B}_{1},\ \mathrm{B}_{2},\ \mathrm{B}_{3}\big)$ be an arbitrary
element of $\mathrm{Z}^{2}\left( \mathrm{{A}_{07},A_{07}}\right) $. Then $%
\theta =\left( 0,\alpha \ \Delta _{13},\ 0\right) $ for some $\alpha \in 
\mathbb{C}$. The automorphism group $\mathrm{Aut}\left( \mathrm{A}%
_{07}\right) $ consists of the automorphisms $\phi $ given by a matrix of
the following form: 
\begin{equation*}
\phi =%
\begin{pmatrix}
a_{33}^{2} & 0 & 0 \\ 
0 & a_{33}^{3} & a_{23} \\ 
0 & 0 & a_{33}%
\end{pmatrix}%
.
\end{equation*}%
Let $\phi =\bigl(a_{ij}\bigr)\in $ $\mathrm{Aut}\left( \mathrm{A}%
_{07}\right) .$ Then $\theta \ast \phi \ =\ \theta $. So we get the
superalgebras $\mathrm{S}_{09}^{\alpha }$. Moreover, $\mathrm{S}
_{09}^{\alpha }\cong \mathrm{S}_{09}^{\beta}$ if and only if $\alpha=\beta$.

\item \underline{$\mathrm{S}^{-}=\mathrm{A}_{08}^{\alpha }$}. Then $\mathrm{Z%
}^{2}\left( \mathrm{{A}_{08}^{\alpha },A_{08}^{\alpha }}\right) \neq
\emptyset $ if and only if $\alpha \in \{-1,0,1\}$.

\begin{itemize}
\item Let $\theta =\big(\mathrm{B}_{1},\ \mathrm{B}_{2},\ \mathrm{B}_{3}\big)
$ be an arbitrary element of $\mathrm{Z}^{2}\left( \mathrm{{A}%
_{08}^{-1},A_{08}^{-1}}\right) $. Then $\theta =\left( 2\varepsilon \Delta
_{11},\varepsilon \ \Delta _{12},\ \varepsilon \Delta _{13}\right) $ where $%
\varepsilon \in \{\pm 1\}$. The automorphism group $\mathrm{Aut}\left( \mathrm{A%
}_{08}^{-1}\right) $ consists of the automorphisms $\phi $ given by matrices of
the following forms: %
\begin{equation*}
\begin{pmatrix}
1 & 0 & 0 \\ 
0 & a_{22} & 0 \\ 
0 & 0 & a_{33}%
\end{pmatrix}%
,%
\begin{pmatrix}
-1 & 0 & 0 \\ 
0 & 0 & a_{23} \\ 
0 & a_{32} & 0%
\end{pmatrix}%
.
\end{equation*}%
Let $\phi =\bigl(a_{ij}\bigr)\in $ $\mathrm{Aut}\left( \mathrm{A}%
_{08}^{-1}\right) $ be an automorphism of the second form. Then $\theta \ast
\phi \ =-\ \theta $. Consequently, we may assume $\varepsilon =1$, and after the change of basis $e_{1}:=\frac{1}{2}e_{1}$ we obtain the superalgebra $\mathrm{S}_{10}$.

\item Let $\theta =\big(\mathrm{B}_{1},\ \mathrm{B}_{2},\ \mathrm{B}_{3}\big)
$ be an arbitrary element of $\mathrm{Z}^{2}\left( \mathrm{{A}%
_{08}^{0},A_{08}^{0}}\right) $. Then $\theta \in \left\{ \eta _{1},\eta
_{2}\right\} $ where%
\begin{eqnarray*}
\ \eta _{1} &=&\left( 2\varepsilon \Delta _{11},\varepsilon \ \Delta _{12},\
 2\varepsilon \Delta _{13}\right) , \\
\eta _{2} &=&\left( 2\varepsilon \Delta _{11},\varepsilon \ \Delta _{12},\
0\right) ,
\end{eqnarray*}%
where $\varepsilon \in \{\pm 1\}$. The automorphism group $\mathrm{Aut}\left( 
\mathrm{A}_{08}^{0}\right) $ consists of the automorphisms $\phi $ given by
a matrix of the following form:%
\begin{equation*}
\phi =%
\begin{pmatrix}
1 & 0 & 0 \\ 
0 & a_{22} & 0 \\ 
0 & 0 & a_{33}%
\end{pmatrix}%
.
\end{equation*}%
Let $\phi =\bigl(a_{ij}\bigr)\in $ $\mathrm{Aut}\left( \mathrm{A}%
_{08}^{0}\right) $. Then $\theta \ast
\phi \ =\ \theta $. So, after the change of basis $e_{1}:=\frac{\varepsilon}{2}e_{1}$, we obtain the superalgebras $\mathrm{S}_{11}$ $(\theta=\eta_{1},\ \varepsilon=1)$, $\mathrm{S}_{12}$ $(\theta=\eta_{1},\ \varepsilon=-1)$, $\mathrm{S}_{13}$ $(\theta=\eta_{2},\ \varepsilon=1)$ and $\mathrm{S}_{14}$ $(\theta=\eta_{2},\ \varepsilon=-1)$. 

\item Let $\alpha=1$. In the same way, $\theta=\left(2\varepsilon\Delta_{11},\varepsilon\Delta_{12},\varepsilon\Delta_{13}\right)$
for some $\varepsilon\in\{\pm1\}$, and we obtain the superalgebras ${\rm S}_{15}$ $(\varepsilon=1)$ and ${\rm S}_{16}$ $(\varepsilon=-1)$
$\big($after the change of basis $e_1:=\frac{\varepsilon}{2}e_1\big)$.

\end{itemize}

\item \underline{$\mathrm{S}^{-}=\mathrm{A}_{17}$}. We have $\mathrm{Z%
}^{2}\left( \mathrm{{A}_{17},A_{17}}\right) =
\emptyset $.
\end{enumerate}

\subsection{Semi-alternative superalgebras of type \texorpdfstring{$(2,1)$}{(2,1)}}

We proceed in the same way for semi-alternative superalgebras of type
$(2,1)$. Their commutator superalgebras are $3$-dimensional Malcev
superalgebras of type $(2,1)$, whose classification is known; see
\cite{akh,AE}.

\begin{proposition}[Theorem~21 in \cite{akh}]\label{3-Malcev(2,1)}
Let $\mathrm{A}$ be a nontrivial $3$-dimensional Malcev superalgebra of type $\left(
2,1\right) $. Then $\mathrm{A}$ is isomorphic to one of the following
superalgebras: 
\begin{longtable}{|l|>{\raggedright\arraybackslash}p{0.78\textwidth}|}
\hline
$\mathbb{A}$ & \mbox{multiplication table} \\
\hline
\hline
\endhead
$\mathbf{A}_{03}^{-\frac{1}{2}}$ & $[e_{1},e_{2}]=e_{1}$, $[e_{2},f_{1}]=-\frac{1}{2} f_{1}$, $[f_{1},f_{1}]=e_{1}$ \\
\hline
$\mathbf{A}_{03}^{1}$ & $[e_{1},e_{2}]=e_{1}$, $[e_{2},f_{1}]= f_{1}$, $[f_{1},f_{1}]=e_{1}$ \\
\hline
$\mathbf{A}_{05}^{\alpha}$ & $[e_{1},e_{2}]=e_{1}$, $[e_{2},f_{1}]=\alpha f_{1}$ \\
\hline
$\mathbf{A}_{07}$ & $[e_{1},f_{1}]=f_{1}$ \\
\hline
$\mathbf{A}_{10}$ & $[f_{1},f_{1}]=e_{1}$ \\
\hline
\end{longtable}
\addtocounter{table}{-1} 
\end{proposition}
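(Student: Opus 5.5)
Proposition~\ref{3-Malcev(2,1)} is quoted from Theorem~21 in \cite{akh}, so the shortest route is simply to invoke that reference. If we want the list to be self-contained, I would rederive it directly.

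Let $\mathrm{A}=\mathrm{A}_{\bar 0}\oplus\mathrm{A}_{\bar 1}$ with $\mathrm{A}_{\bar 0}=\langle e_1,e_2\rangle$ and $\mathrm{A}_{\bar 1}=\langle f_1\rangle$. By the grading, the bracket consists of three pieces:
\begin{itemize}
\item a skew product on $\mathrm{A}_{\bar 0}$;
\item an action $\mathrm{A}_{\bar 0}\times\mathrm{A}_{\bar 1}\to\mathrm{A}_{\bar 1}$, which is a linear form $\lambda$ with $[e_i,f_1]=\lambda(e_i)f_1$;
\item a symmetric product $[f_1,f_1]=v\in\mathrm{A}_{\bar 0}$.
\end{itemize}
The even part is a Malcev algebra of dimension $2$, hence a Lie algebra. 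Up to isomorphism it is either abelian or $[e_1,e_2]=e_1$.

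Next I would impose the super Sagle identity (equivalently, the super Malcev identity) on homogeneous triples and quadruples containing $f_1$. This yields constraints linking $\lambda$ and $v$. Since $\mathrm{A}_{\bar 1}$ is one-dimensional, I expect these to reduce to a few conditions: $\lambda$ kills the derived algebra, together with a relation of the form $[v,f_1]=c\,\lambda(\cdot)\ldots$ coupling $v$ and $\lambda$.

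The case analysis then splits as follows.
\begin{itemize}
\item If $\mathrm{A}_{\bar 0}$ is abelian, normalize the pair $(\lambda,v)$ under $GL_2$ and scaling of $f_1$. The options are $\lambda=0,v\neq 0$, which gives $\mathbf{A}_{10}$; $\lambda\neq0,v=0$, which gives $\mathbf{A}_{07}$; and $\lambda(v)\neq 0$ or $v\in\ker\lambda$ with both nonzero, which the identities should exclude.
\item If $[e_1,e_2]=e_1$, then $\lambda(e_1)=0$, so $\lambda$ is determined by $\alpha=\lambda(e_2)$. With $v=0$ this gives $\mathbf{A}_{05}^\alpha$. With $v\neq0$, the Malcev identity should force $v\in\langle e_1\rangle$ and a quadratic condition on $\alpha$ whose roots are $-\tfrac12$ and $1$, giving $\mathbf{A}_{03}^{-1/2}$ and $\mathbf{A}_{03}^{1}$.
\end{itemize}

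The main obstacle is this last step: computing the super Sagle identity carefully with Koszul signs to extract the exact equation on $\alpha$. The non-Lie root $\alpha=-\tfrac12$ appears precisely because the Jacobi identity is relaxed. A secondary step is checking that the listed algebras are pairwise non-isomorphic, via the invariant $\alpha$ and the dimensions of the derived algebra and center.
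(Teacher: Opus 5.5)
Your main route---citing Theorem~21 of \cite{akh}---is exactly what the paper does, since the paper gives no independent proof. Your sketched rederivation is also sound in outline: for even $a,b$, the super-Sagle identity with two odd arguments gives $[[v,a],b]+\lambda(b)[v,a]+\lambda([a,b])v=2\lambda(a)\lambda(b)v$ and $[v,[a,b]]=\lambda(b)[v,a]-\lambda(a)[v,b]$. These rule out $\lambda\neq 0\neq v$ in the abelian case, and for $[e_1,e_2]=e_1$ they force $v\in\langle e_1\rangle$ and $v\,(2\alpha+1)(\alpha-1)=0$.

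One correction: you have the two roots the wrong way round. The value $\alpha=-\tfrac12$ gives a Lie superalgebra, since $[e_2,[f_1,f_1]]=-e_1=2[[e_2,f_1],f_1]$. It is $\alpha=1$ that gives the non-Lie Malcev case, as recorded in Remark~\ref{remark:Malcev-21-nonLie}.
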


\begin{remark}\label{remark:Malcev-21-nonLie}
In Proposition~\ref{3-Malcev(2,1)}, only $\mathbf{A}_{03}^{1}$ is a non-Lie Malcev superalgebra.
\end{remark}
\subsubsection{The classification Theorem A2}

As in the previous case (Lemmas~\ref{lem:malcev} and~\ref{lem:supercomm}), the supercommutator of a semi-alternative superalgebra of type $(2,1)$ is a Malcev superalgebra, and a trivial supercommutator of a semi-alternative superalgebra gives rise to an associative supercommutative superalgebra. We therefore recall the classification of nontrivial $3$-dimensional associative supercommutative superalgebras of type $(2,1)$.

\begin{proposition}[Corollary~7 in \cite{AKK}]\label{AC(2,1)}
Let $\mathrm{A}$ be a nontrivial $3$-dimensional associative supercommutative
superalgebra of type $(2,1)$. Then $\mathrm{A}$ is isomorphic to one of
the following superalgebras:
\begin{longtable}{|l|>{\raggedright\arraybackslash}p{0.78\textwidth}|}
\hline
$\mathbb{A}$ & \mbox{multiplication table} \\
\hline
\hline
\endhead
${\bf R}_{01}$ & $e_{1}e_{1}=e_{1}$, $e_{2}e_{2}=e_{2}$ \\
\hline
${\bf R}_{02}$ & $e_{1}e_{1}=e_{1}$, $e_{2}e_{2}=e_{2}$, $e_{1}f_{1}=f_{1}$, $f_{1}e_{1}=f_{1}$ \\
\hline
${\bf R}_{09}$ & $e_{1}e_{1}=e_{1}$ \\
\hline
${\bf R}_{15}$ & $e_{1}e_{1}=e_{1}$, $e_{1}f_{1}=f_{1}$, $f_{1}e_{1}=f_{1}$ \\
\hline
${\bf R}_{17}$ & $e_{1}e_{1}=e_{1}$, $e_{1}e_{2}=e_{2}$, $e_{2}e_{1}=e_{2}$ \\
\hline
${\bf R}_{21}$ & $e_{1}e_{1}=e_{1}$, $e_{1}e_{2}=e_{2}$, $e_{2}e_{1}=e_{2}$, $e_{1}f_{1}=f_{1}$, $f_{1}e_{1}=f_{1}$ \\
\hline
${\bf R}_{37}$ & $e_{1}e_{1}=e_{2}$ \\
\hline
\end{longtable}
\addtocounter{table}{-1}
\end{proposition}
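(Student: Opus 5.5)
This proposition is quoted from \cite{AKK}, where it appears as Corollary~7. The plan is therefore to re-derive it as a short direct classification rather than rely on the citation. Let $\mathrm{A}=\mathrm{A}_{\bar0}\oplus\mathrm{A}_{\bar1}$ be associative and supercommutative with $\dim\mathrm{A}_{\bar0}=2$ and $\mathrm{A}_{\bar1}=\langle f_1\rangle$. Supercommutativity gives $f_1f_1=-f_1f_1$, so $f_1^2=0$. Hence the whole multiplication is determined by two pieces of data:
\begin{itemize}
\item a $2$-dimensional commutative associative algebra $\mathrm{A}_{\bar0}$;
\item a module action $\mathrm{A}_{\bar0}\to\mathrm{End}(\langle f_1\rangle)\cong\mathbb{C}$, $e\mapsto\lambda(e)$, with $ef_1=f_1e=\lambda(e)f_1$.
\end{itemize}
Associativity $(ee')f_1=e(e'f_1)$ forces $\lambda$ to be an algebra homomorphism $\mathrm{A}_{\bar0}\to\mathbb{C}$, possibly zero. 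All remaining associativity conditions involving $f_1$ twice vanish automatically, since $f_1^2=0$ and the odd part is $1$-dimensional.

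First, use the classical list of $2$-dimensional commutative associative complex algebras: $\mathbb{C}\oplus\mathbb{C}$ (basis $e_1^2=e_1$, $e_2^2=e_2$); $\mathbb{C}[x]/(x^2)$ with unit ($e_1^2=e_1$, $e_1e_2=e_2$); $\mathbb{C}\oplus$(zero) ($e_1^2=e_1$); the nilpotent algebra $e_1^2=e_2$; and the zero algebra. Second, for each, list the homomorphisms $\lambda$ up to automorphisms of $\mathrm{A}_{\bar0}$.
\begin{itemize}
\item For $\mathbb{C}\oplus\mathbb{C}$: $\lambda=0$ gives ${\bf R}_{01}$; the two projections are swapped by the automorphism, giving ${\bf R}_{02}$.
\item For $\mathbb{C}[x]/(x^2)$: the only homomorphisms are $0$ and the augmentation, giving ${\bf R}_{17}$ and ${\bf R}_{21}$.
\item For $\mathbb{C}e_1\oplus\mathbb{C}e_2$ with $e_2$ annihilating: $\lambda=0$ or $\lambda(e_1)=1$, $\lambda(e_2)=0$, giving ${\bf R}_{09}$ and ${\bf R}_{15}$.
\item For nilpotent $\mathrm{A}_{\bar0}$, any homomorphism to $\mathbb{C}$ kills nilpotents, so $\lambda=0$ and we get ${\bf R}_{37}$.
\item The zero algebra with $\lambda=0$ is the trivial superalgebra, which is excluded.
\end{itemize}

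Third, check that these seven are pairwise non-isomorphic. It suffices to compare the isomorphism type of the even part together with whether $\mathrm{A}_{\bar0}$ acts nontrivially on $f_1$. These invariants separate all seven algebras, because each listed even algebra carries at most one nonzero $\lambda$ up to automorphism.

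The only step requiring care is the homomorphism-orbit count in the second step. One must confirm there that no $\lambda$ is missed and that the projections of $\mathbb{C}\oplus\mathbb{C}$ are identified. In the degenerate case one must also check that the annihilating direction cannot carry a nonzero $\lambda$: since $e_1e_2=0$, we get $\lambda(e_1)\lambda(e_2)=0$ and $\lambda(e_2)^2=\lambda(e_2^2)=0$.
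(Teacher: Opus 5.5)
Your argument is correct, and it differs from what the paper does. The paper gives no proof here. It imports the list as Corollary~7 of \cite{AKK}, the paper on $3$-dimensional right alternative superalgebras, where the list is a by-product of that larger classification; the scattered labels ${\bf R}_{01},\dots,{\bf R}_{37}$ come from there.

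You instead give a self-contained structural derivation, which rests on three observations:
\begin{enumerate}
\item Supercommutativity forces $f_1^2=0$.
\item Hence a superalgebra of this type is exactly a $2$-dimensional commutative associative algebra $\mathrm{A}_{\bar0}$ together with a possibly zero character $\lambda\colon\mathrm{A}_{\bar0}\to\mathbb{C}$.
\item Isomorphisms are even and rescaling $f_1$ does not change $\lambda$. So isomorphism classes are exactly the pairs consisting of the type of $\mathrm{A}_{\bar0}$ and the $\mathrm{Aut}(\mathrm{A}_{\bar0})$-orbit of $\lambda$.
\end{enumerate}

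Your character computations are right in each case. For example, $\lambda(e_2)^2=0$ kills the square-zero direction, and $\lambda(e_1)^3=\lambda(e_1e_2)=0$ settles the nilpotent case.

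The one input you take on faith is the list of five $2$-dimensional commutative associative algebras, and it is standard. If there is a nonzero idempotent $e$, either $e$ is a unit, or it splits off a complement $\mathbb{C}u$ with $eu=0$ and $u^2\in\mathbb{C}u$. Otherwise the algebra is nilpotent.

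Your route buys transparency. It explains why there are exactly seven algebras, and it shows that non-isomorphism is detected by two invariants (the type of the even part and whether it acts nontrivially on $f_1$), without relying on a large classification. The citation route buys consistency of notation with the bigger right alternative list used in \cite{AKK}.
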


\begin{theoremA2}
Let $\mathrm{S}$ be a nontrivial $3$-dimensional semi-alternative
superalgebra of type $\left( 2,1\right) $. Then $\mathrm{S}$ is either one
of the associative supercommutative superalgebras listed in
Proposition~\ref{AC(2,1)}, or it is isomorphic to one of the following
superalgebras\footnote{To obtain the multiplication tables in the form given below (with fewer non-unit structure constants), we apply the following basis changes to the superalgebras obtained in the proof: $e_{2}:=\frac{1}{2}e_{2}$ in ${\bf S}_{01}$, ${\bf S}_{05}$, ${\bf S}_{09}$, ${\bf S}_{10}$, ${\bf S}_{31}$; $e_{2}:=-\frac{1}{2}e_{2}$ in ${\bf S}_{02}$, ${\bf S}_{06}$, ${\bf S}_{11}$, ${\bf S}_{12}$, ${\bf S}_{30}$; $e_{1}:=\frac{1}{4}e_{1}$, $e_{2}:=\frac{1}{2}e_{2}$, $f_{1}:=\frac{1}{2}f_{1}$ in ${\bf S}_{03}$; $e_{1}:=\frac{1}{4}e_{1}$, $e_{2}:=-\frac{1}{2}e_{2}$, $f_{1}:=\frac{1}{2}f_{1}$ in ${\bf S}_{04}$; $e_{1}:=\frac{1}{4}e_{1}$, $e_{2}:=\frac{1}{2}e_{2}$ in ${\bf S}_{07}$, ${\bf S}_{13}$, ${\bf S}_{14}$, ${\bf S}_{32}$; $e_{1}:=\frac{1}{4}e_{1}$, $e_{2}:=-\frac{1}{2}e_{2}$ in ${\bf S}_{08}$, ${\bf S}_{15}$, ${\bf S}_{16}$, ${\bf S}_{33}$; $e_{1}:=\frac{1}{2}e_{1}$ in ${\bf S}_{18}$, ${\bf S}_{20}$, ${\bf S}_{22}$; $e_{1}:=-\frac{1}{2}e_{1}$ in ${\bf S}_{19}$, ${\bf S}_{21}$, ${\bf S}_{23}$.}:

\begin{longtable}{|l|>{\raggedright\arraybackslash}p{0.78\textwidth}|}
\hline
$\mathbb{A}$ & \mbox{multiplication table} \\
\hline
\hline
\endhead
${\mathbf S}_{01}$ & $e_{1}e_{2}=e_{1}$, $e_{2}e_{2}=e_{2}$, $e_{2}f_{1}=f_{1}$, $f_{1}f_{1}=e_{1}$ \\
\hline
${\mathbf S}_{02}$ & $e_{2}e_{1}=e_{1}$, $e_{2}e_{2}=e_{2}$, $f_{1}e_{2}=f_{1}$, $f_{1}f_{1}=e_{1}$ \\
\hline
${\mathbf S}_{03}$ & $e_{1}e_{2}=e_{1}$, $e_{2}e_{2}=e_{1}+e_{2}$, $e_{2}f_{1}=f_{1}$, $f_{1}f_{1}=e_{1}$ \\
\hline
${\mathbf S}_{04}$ & $e_{2}e_{1}=e_{1}$, $e_{2}e_{2}=e_{1}+e_{2}$, $f_{1}e_{2}=f_{1}$, $f_{1}f_{1}=e_{1}$ \\
\hline
${\mathbf S}_{05}$ & $e_{1}e_{2}=e_{1}$, $e_{2}e_{2}=e_{2}$, $f_{1}e_{2}=f_{1}$ \\
\hline
${\mathbf S}_{06}$ & $e_{2}e_{1}=e_{1}$, $e_{2}e_{2}=e_{2}$, $e_{2}f_{1}=f_{1}$ \\
\hline
${\mathbf S}_{07}$ & $e_{1}e_{2}=e_{1}$, $e_{2}e_{2}=e_{1}+e_{2}$, $f_{1}e_{2}=f_{1}$ \\
\hline
${\mathbf S}_{08}$ & $e_{2}e_{1}=e_{1}$, $e_{2}e_{2}=e_{1}+e_{2}$, $e_{2}f_{1}=f_{1}$ \\
\hline
${\mathbf S}_{09}$ & $e_{1}e_{2}=e_{1}$, $e_{2}e_{2}=e_{2}$, $e_{2}f_{1}=f_{1}$, $f_{1}e_{2}=f_{1}$ \\
\hline
${\mathbf S}_{10}$ & $e_{1}e_{2}=e_{1}$, $e_{2}e_{2}=e_{2}$ \\
\hline
${\mathbf S}_{11}$ & $e_{2}e_{1}=e_{1}$, $e_{2}e_{2}=e_{2}$, $e_{2}f_{1}=f_{1}$, $f_{1}e_{2}=f_{1}$ \\
\hline
${\mathbf S}_{12}$ & $e_{2}e_{1}=e_{1}$, $e_{2}e_{2}=e_{2}$ \\
\hline
${\mathbf S}_{13}$ & $e_{1}e_{2}=e_{1}$, $e_{2}e_{2}=e_{1}+e_{2}$, $e_{2}f_{1}=f_{1}$, $f_{1}e_{2}=f_{1}$ \\
\hline
${\mathbf S}_{14}$ & $e_{1}e_{2}=e_{1}$, $e_{2}e_{2}=e_{1}+e_{2}$ \\
\hline
${\mathbf S}_{15}$ & $e_{2}e_{1}=e_{1}$, $e_{2}e_{2}=e_{1}+e_{2}$, $e_{2}f_{1}=f_{1}$, $f_{1}e_{2}=f_{1}$ \\
\hline
${\mathbf S}_{16}$ & $e_{2}e_{1}=e_{1}$, $e_{2}e_{2}=e_{1}+e_{2}$ \\
\hline
${\mathbf S}_{17}$ & 
$e_{1}e_{1}=e_{2}$, $e_{1}e_{2}=e_{1}$, $e_{2}e_{1}=e_{1}$, $e_{2}e_{2}=e_{2}$,\\ & $e_{1}f_{1}=f_{1}$, $f_{1}e_{1}=-f_{1}$, $e_{2}f_{1}=f_{1}$, $f_{1}e_{2}=f_{1}$ \\
\hline
${\mathbf S}_{18}$ & $e_{1}e_{1}=e_{1}$, $e_{1}e_{2}=e_{2}$, $e_{2}e_{1}=e_{2}$, $e_{1}f_{1}=f_{1}$ \\
\hline
${\mathbf S}_{19}$ & $e_{1}e_{1}=e_{1}$, $e_{1}e_{2}=e_{2}$, $e_{2}e_{1}=e_{2}$, $f_{1}e_{1}=f_{1}$ \\
\hline
${\mathbf S}_{20}$ & $e_{1}e_{1}=e_{1}$, $e_{1}f_{1}=f_{1}$ \\
\hline
${\mathbf S}_{21}$ & $e_{1}e_{1}=e_{1}$, $f_{1}e_{1}=f_{1}$ \\
\hline
${\mathbf S}_{22}$ & $e_{1}e_{1}=e_{1}$, $e_{2}e_{2}=e_{2}$, $e_{1}f_{1}=f_{1}$ \\
\hline
${\mathbf S}_{23}$ & $e_{1}e_{1}=e_{1}$, $e_{2}e_{2}=e_{2}$, $f_{1}e_{1}=f_{1}$ \\
\hline
${\mathbf S}_{24}$ & $f_{1}f_{1}=e_{1}$ \\
\hline
${\mathbf S}_{25}$ & $e_{1}e_{2}=e_{1}$, $e_{2}e_{1}=e_{1}$, $e_{2}e_{2}=e_{2}$, $f_{1}e_{2}=f_{1}$, $e_{2}f_{1}=f_{1}$, $f_{1}f_{1}=e_{1}$ \\
\hline
${\mathbf S}_{26}$ & $e_{1}e_{1}=e_{1}$, $e_{1}f_{1}=f_{1}$, $f_{1}e_{1}=f_{1}$, $f_{1}f_{1}=e_{1}$ \\
\hline
${\mathbf S}_{27}$ & $e_{1}e_{1}=e_{1}$, $e_{2}e_{2}=e_{2}$, $e_{1}f_{1}=f_{1}$, $f_{1}e_{1}=f_{1}$, $f_{1}f_{1}=e_{1}$ \\
\hline
${\mathbf S}_{28}$ & $e_{2}e_{2}=e_{2}$, $f_{1}f_{1}=e_{1}$ \\
\hline
${\mathbf S}_{29}$ & $e_{2}e_{2}=e_{1}$, $f_{1}f_{1}=e_{1}$ \\
\hline
${\mathbf S}_{30}$ & $e_{2}e_{1}=e_{1}$, $e_{2}e_{2}=e_{2}$, $f_{1}e_{2}=f_{1}$ \\
\hline
${\mathbf S}_{31}$ & $e_{1}e_{2}=e_{1}$, $e_{2}e_{2}=e_{2}$, $e_{2}f_{1}=f_{1}$ \\
\hline
${\mathbf S}_{32}$ & $e_{1}e_{2}=e_{1}$, $e_{2}e_{2}=e_{1}+e_{2}$, $e_{2}f_{1}=f_{1}$ \\
\hline
${\mathbf S}_{33}$ & $e_{2}e_{1}=e_{1}$, $e_{2}e_{2}=e_{1}+e_{2}$, $f_{1}e_{2}=f_{1}$ \\
\hline
\end{longtable}
\addtocounter{table}{-1}
All listed superalgebras are pairwise non-isomorphic.
\end{theoremA2}

\subsubsection{The proof of Theorem A2}
Let $\mathrm{S}$ be a nontrivial $3$-dimensional semi-alternative superalgebra of type $(2,1)$.  If $\mathrm{S}^{-}$ is trivial, then
$\mathrm{S}$ is associative supercommutative, and hence it is one of the
superalgebras listed in Proposition~\ref{AC(2,1)}. Otherwise,
$\mathrm{S}^{-}$ is a nontrivial $3$-dimensional Malcev superalgebra of
type $(2,1)$. By Proposition \ref{3-Malcev(2,1)}, we may assume that
\[
\mathrm{S}^{-} \in \{ \mathbf{A}_{03}^{-\frac{1}{2}}, \mathbf{A}_{03}^{1}, \mathbf{A}_{05}^{\alpha},  \mathbf{A}_{07},\mathbf{A}_{10} \}.
\]
Thus we have the following cases:

\begin{enumerate}
\item \underline{$\mathrm{S}^{-}=\mathbf{A}_{03}^{-\frac{1}{2}}$}. Then $%
\mathrm{Z}^{2}\left( \mathbf{A}_{03}^{-\frac{1}{2}},\mathbf{A}_{03}^{-\frac{1}{2}}%
\right) =\emptyset $.

\item \underline{$\mathrm{S}^{-}=\mathbf{A}_{03}^{1}$}. Let $\theta =\big(%
\mathrm{B}_{1},\ \mathrm{B}_{2},\ \mathrm{B}_{3}\big)$ be an arbitrary
element of $\mathrm{Z}^{2}\left( \mathbf{A}_{03}^{1},\mathbf{A}_{03}^{1}\right) $.
Then $\theta \ =\ \left( \varepsilon \ \Delta _{12}+\alpha \Delta
_{22},2\varepsilon \ \Delta _{22},\varepsilon \ \Delta _{23}\right) $ for some $%
\alpha \in \mathbb{C}$ and $\varepsilon \in \{\pm 1\}$. The automorphism group $%
\mathrm{Aut}\left( \mathbf{A}_{03}^{1}\right) $ consists of the
automorphisms $\phi $ given by a matrix of the following form: 
\begin{equation*}
\phi =%
\begin{pmatrix}
a_{11} & a_{12} & 0 \\ 
0 & 1 & 0 \\ 
0 & 0 & \sigma \sqrt{a_{11}}%
\end{pmatrix}%
:\sigma \in \{\pm 1\}.
\end{equation*}
If $\alpha =0$, we get the superalgebras $\mathbf{S}_{01},\mathbf{S}_{02}$.
If $\alpha \neq 0$, we choose $\phi $ to be the  following automorphism:%
\begin{equation*}
\phi =%
\begin{pmatrix}
\alpha  & 0 & 0 \\ 
0 & 1 & 0 \\ 
0 & 0 & \sqrt{\alpha }%
\end{pmatrix}%
\end{equation*}%
Then $\theta \ast \phi \ =\ \left( \varepsilon \ \Delta _{12}+\Delta
_{22},2\varepsilon \ \Delta _{22},\varepsilon \ \Delta _{23}\right) $. So we get
the superalgebras $\mathbf{S}_{03},\mathbf{S}_{04}$.

\item \underline{$\mathrm{S}^{-}=\mathbf{A}_{05}^{-1}$}. Let $\theta =\big(%
\mathrm{B}_{1},\ \mathrm{B}_{2},\ \mathrm{B}_{3}\big)$ be an arbitrary
element of $\mathrm{Z}^{2}\left( \mathbf{A}_{05}^{-1},\mathbf{A}_{05}^{-1}\right) $%
. Then $\theta \ =\ \left( \varepsilon \ \Delta _{12}+\alpha \Delta
_{22},2\varepsilon \ \Delta _{22},\varepsilon \ \Delta _{23}\right) $ for some $%
\alpha \in \mathbb{C}$ and $\varepsilon \in \{\pm 1\}$. The automorphism group $%
\mathrm{Aut}\left( \mathbf{A}_{05}^{-1}\right) $ consists of the
automorphisms $\phi $ given by a matrix of the following form: 
\begin{equation*}
\phi =%
\begin{pmatrix}
a_{11} & a_{12} & 0 \\ 
0 & 1 & 0 \\ 
0 & 0 & a_{33}%
\end{pmatrix}%
.
\end{equation*}%
If $\alpha =0$, we get the superalgebras $\mathbf{S}_{05},\mathbf{S}_{06}$.
If $\alpha \neq 0$, we choose $\phi $ to be the following automorphism:%
\begin{equation*}
\phi =%
\begin{pmatrix}
\alpha  & 0 & 0 \\ 
0 & 1 & 0 \\ 
0 & 0 & 1%
\end{pmatrix}%
.
\end{equation*}%
Then $\theta \ast \phi \ =\ \left( \varepsilon \ \Delta _{12}+\Delta
_{22},2\varepsilon \ \Delta _{22},\varepsilon \ \Delta _{23}\right) $. So we get
the superalgebras $\mathbf{S}_{07},\mathbf{S}_{08}$.

\item \underline{$\mathrm{S}^{-}=\mathbf{A}_{05}^{0}$}. Let $\theta =\big(%
\mathrm{B}_{1},\ \mathrm{B}_{2},\ \mathrm{B}_{3}\big)$ be an arbitrary
element of $\mathrm{Z}^{2}\left( \mathbf{A}_{05}^{0},\mathbf{A}_{05}^{0}\right) $.
Then $\theta \ =\ \left( \varepsilon \ \Delta _{12}+\alpha \Delta
_{22},2\varepsilon \ \Delta _{22},2\delta \varepsilon \ \Delta _{23}\right) $
for some $\alpha \in \mathbb{C}$ and $\varepsilon \in \{\pm 1\},\delta \in
\{0,1\}$. The automorphism group $\mathrm{Aut}\left( \mathbf{A}%
_{05}^{0}\right) $ consists of the automorphisms $\phi $ given by a matrix
of the following form: 
\begin{equation*}
\phi =%
\begin{pmatrix}
a_{11} & a_{12} & 0 \\ 
0 & 1 & 0 \\ 
0 & 0 & a_{33}%
\end{pmatrix}%
.
\end{equation*}%
If $\alpha =0$, we get the superalgebras $\mathbf{S}_{09},\mathbf{S}_{10},%
\mathbf{S}_{11},\mathbf{S}_{12}$. If $\alpha \neq 0$, we choose $\phi $ to
be the following automorphism:%
\begin{equation*}
\phi =%
\begin{pmatrix}
\alpha  & 0 & 0 \\ 
0 & 1 & 0 \\ 
0 & 0 & 1%
\end{pmatrix}%
.
\end{equation*}%
Then $\theta \ast \phi \ =\ \left( \varepsilon \ \Delta _{12}+\Delta
_{22},2\varepsilon \ \Delta _{22},2\delta \varepsilon \ \Delta _{23}\right) $%
. So we get the superalgebras $\mathbf{S}_{13},\mathbf{S}_{14},\mathbf{S}%
_{15},\mathbf{S}_{16}$.

\item \underline{$\mathrm{S}^{-}=\mathbf{A}_{05}^{1}$}. Let $\theta =\big(%
\mathrm{B}_{1},\ \mathrm{B}_{2},\ \mathrm{B}_{3}\big)$ be an arbitrary
element of $\mathrm{Z}^{2}\left( \mathbf{A}_{05}^{1},\mathbf{A}_{05}^{1}\right) $.
Then $\theta \ =\ \left( \varepsilon \ \Delta _{12}+\alpha \Delta
_{22},2\varepsilon \ \Delta _{22},\varepsilon \ \Delta _{23}\right) $ for some $%
\alpha \in \mathbb{C}$ and $\varepsilon \in \{\pm 1\}$. The automorphism group $%
\mathrm{Aut}\left( \mathbf{A}_{05}^{1}\right) $ consists of the
automorphisms $\phi $ given by a matrix of the following form: 
\begin{equation*}
\phi =%
\begin{pmatrix}
a_{11} & a_{12} & 0 \\ 
0 & 1 & 0 \\ 
0 & 0 & a_{33}%
\end{pmatrix}%
.
\end{equation*}%
If $\alpha =0$, we get the superalgebras $\mathbf{S}_{30}$ $(\varepsilon=-1)$ and $\mathbf{S}_{31}$ $(\varepsilon=1)$.
If $\alpha \neq 0$, we choose $\phi $ to be the following automorphism:%
\begin{equation*}
\phi =%
\begin{pmatrix}
\alpha  & 0 & 0 \\ 
0 & 1 & 0 \\ 
0 & 0 & 1%
\end{pmatrix}%
.
\end{equation*}%
Then $\theta \ast \phi \ =\ \left( \varepsilon \ \Delta _{12}+\Delta
_{22},2\varepsilon \ \Delta _{22},\varepsilon \ \Delta _{23}\right) $. So we get
the superalgebras $\mathbf{S}_{32}$ $(\varepsilon=1)$ and $\mathbf{S}_{33}$ $(\varepsilon=-1)$.

\item \underline{$\mathrm{S}^{-}=\mathbf{A}_{05}^{\alpha \neq -1,0,1}$}. We
have $\mathrm{Z}^{2}\left( \mathbf{A}_{05}^{\alpha },\mathbf{A}_{05}^{\alpha }%
\right) =\emptyset $.

\item \underline{$\mathrm{S}^{-}=\mathbf{A}_{07}$}. Let $\theta =\big(%
\mathrm{B}_{1},\ \mathrm{B}_{2},\ \mathrm{B}_{3}\big)$ be an arbitrary
element of $\mathrm{Z}^{2}\left( \mathbf{A}_{07},\mathbf{A}_{07}\right) $. Then $%
\theta \in \left\{ \eta _{1},\ldots ,\eta _{5}\right\} $ where%
\begin{eqnarray*}
\ \eta _{1} &=&\left( 2\alpha _{1}\Delta _{11}+\alpha _{2}\Delta _{12},\ 
\frac{1-\alpha _{1}^{2}}{\alpha _{2}}\Delta _{11}+\alpha _{2}\Delta _{22},\
\alpha _{1}\Delta _{13}+\alpha _{2}\Delta _{23}\right) , \\
\eta _{2} &=&\left( 2\varepsilon \Delta _{11},\ \alpha _{1}\Delta _{11}+\alpha
_{2}\Delta _{12}+\frac{\alpha _{2}\left( \alpha _{2}-2\varepsilon \right) }{%
\alpha _{1}}\Delta _{22},\ \varepsilon \Delta _{13}\right) , \\
\eta _{3} &=&\left( 2\varepsilon \Delta _{11}+\ \alpha _{1}\Delta _{12},\alpha
_{1}\Delta _{22},\varepsilon \Delta _{13}+\alpha _{1}\ \Delta _{23}\right) , \\
\eta _{4} &=&\left( 2\varepsilon \Delta _{11},\ 2\varepsilon \Delta _{12}+\alpha
_{1}\Delta _{22},\varepsilon \Delta _{13}\right)  \\
\eta _{5} &=&\left( 2\varepsilon \Delta _{11},\alpha _{1}\Delta _{22},\varepsilon
\Delta _{13}\right) ,
\end{eqnarray*}%
for some $\alpha _{1},\alpha _{2}\in \mathbb{C}$ and $\varepsilon \in \{\pm 1\}$, where $\alpha _{2}\neq 0$ in $\eta _{1}$ and $\alpha _{1}\neq 0$ in $\eta _{2}$ (the forms with a vanishing denominator belong to the other families). The automorphism group $\mathrm{Aut}\left( \mathbf{A}_{07}\right) $
consists of the automorphisms $\phi $ given by a matrix of the following
forms: 
\begin{equation*}
\phi =%
\begin{pmatrix}
1 & 0 & 0 \\ 
a_{21} & a_{22} & 0 \\ 
0 & 0 & a_{33}%
\end{pmatrix}%
.
\end{equation*}%
We proceed to classify the orbits of $\eta _{i}$ under the action of $%
\mathrm{Aut}\left( \mathbf{A}_{07}\right) $, distinguishing the following
cases.

\begin{itemize}
\item $\theta =\eta _{1}$. We choose $\phi $ to be the following
automorphism: 
\begin{equation*}
\phi =%
\begin{pmatrix}
1 & 0 & 0 \\ 
-\frac{\alpha _{1}}{\alpha _{2}} & \frac{1}{\alpha _{2}} & 0 \\ 
0 & 0 & 1%
\end{pmatrix}%
.
\end{equation*}%
Then $\theta \ast \phi \ =\left( \Delta _{12},\ \Delta _{11}+\Delta _{22},\
\Delta _{23}\right) $. So we get the superalgebra $\mathbf{S}_{17}$.

\item $\theta =\eta _{2}$. If $\alpha _{2}=2\varepsilon $, we choose $\phi $ to
be the following automorphism:%
\begin{equation*}
\phi =%
\begin{pmatrix}
1 & 0 & 0 \\ 
-\frac{\alpha _{1}}{2\varepsilon } & 1 & 0 \\ 
0 & 0 & 1%
\end{pmatrix}%
.
\end{equation*}%
Then $\theta \ast \phi \ =\left( 2\varepsilon \Delta _{11},\ 2\varepsilon \Delta
_{12},\ \varepsilon \Delta _{13}\right) $. So we get the superalgebras $\mathbf{%
S}_{18},\mathbf{S}_{19}$. For $\alpha _{2}\neq 2\varepsilon $, we choose $\phi $
to be the first of the following matrices if $\alpha _{2}=0$, or the second one
if $\alpha _{2}\neq 0$: 
\begin{equation*}
\begin{pmatrix}
1 & 0 & 0 \\ 
-\frac{\alpha _{1}}{-2\varepsilon +\alpha _{2}} & 1 & 0 \\ 
0 & 0 & 1%
\end{pmatrix}%
,%
\begin{pmatrix}
1 & 0 & 0 \\ 
-\frac{\alpha _{1}}{-2\varepsilon +\alpha _{2}} & \frac{\alpha _{1}}{\alpha
_{2}\left( -2\varepsilon +\alpha _{2}\right) } & 0 \\ 
0 & 0 & 1%
\end{pmatrix}%
.
\end{equation*}%
Then $\theta \ast \phi \ \in \{\left( 2\varepsilon \Delta _{11},0,\ \varepsilon
\Delta _{13}\right) ,\left( 2\varepsilon \Delta _{11},\ \Delta _{22},\ \varepsilon
\Delta _{13}\right) \}$. This yields the superalgebras $\mathbf{S}_{20},%
\mathbf{S}_{21},\mathbf{S}_{22},\mathbf{S}_{23}$.

\item $\theta =\eta _{3}$. If $\alpha _{1}=0$, we obtain the superalgebras $%
\mathbf{S}_{20},\mathbf{S}_{21}$. If $\alpha _{1}\neq 0$, we choose $\phi $
to be the following automorphism:%
\begin{equation*}
\phi =%
\begin{pmatrix}
1 & 0 & 0 \\ 
-\frac{\varepsilon }{\alpha _{1}} & \frac{1}{\alpha _{1}} & 0 \\ 
0 & 0 & 1%
\end{pmatrix}%
.
\end{equation*}%
Then $\theta \ast \phi =\left(  \Delta _{12},\Delta
_{11}+\Delta
_{22}, \Delta _{23}\right) $. Consequently, we obtain
the superalgebra $\mathbf{S}_{17}$.

\item $\theta =\eta _{4}$. If $\alpha _{1}=0$, we obtain the superalgebras $%
\mathbf{S}_{18},\mathbf{S}_{19}$. If $\alpha _{1}\neq 0$, we choose $\phi $
to be the following automorphism:%
\begin{equation*}
\phi =%
\begin{pmatrix}
1 & 0 & 0 \\ 
-\frac{2\varepsilon }{\alpha _{1}} & \frac{1}{\alpha _{1}} & 0 \\ 
0 & 0 & 1%
\end{pmatrix}%
.
\end{equation*}%
Then $\theta \ast \phi =\left( 2\varepsilon \Delta _{11},\ \Delta
_{22},\varepsilon \Delta _{13}\right) $. Hence, we obtain the superalgebras $%
\mathbf{S}_{22},\mathbf{S}_{23}$.

\item $\theta =\eta _{5}$. If $\alpha _{1}=0$, we obtain the superalgebras $%
\mathbf{S}_{20},\mathbf{S}_{21}$. If $\alpha _{1}\neq 0$, we choose $\phi $
to be the following automorphism:%
\begin{equation*}
\phi =%
\begin{pmatrix}
1 & 0 & 0 \\ 
0 & \frac{1}{\alpha _{1}} & 0 \\ 
0 & 0 & 1%
\end{pmatrix}%
.
\end{equation*}%
Then $\theta \ast \phi =\left( 2\varepsilon \Delta _{11},\Delta _{22},\varepsilon
\Delta _{13}\right) $. So we obtain the superalgebras $\mathbf{S}_{22},%
\mathbf{S}_{23}$.
\end{itemize}

\item \underline{$\mathrm{S}^{-}=\mathbf{A}_{10}$}. Let $\theta =\big(%
\mathrm{B}_{1},\ \mathrm{B}_{2},\ \mathrm{B}_{3}\big)$ be an arbitrary
element of $\mathrm{Z}^{2}\left( \mathbf{A}_{10},\mathbf{A}_{10}\right) $. Then $%
\theta \in \left\{ \eta _{1},\eta _{2},\eta _{3}\right\} $ where%
\begin{eqnarray*}
\ \eta _{1} &=&\left( \alpha _{1}\Delta _{11}+\alpha _{2}\Delta _{12}+\alpha
_{3}\Delta _{22},\ \frac{\alpha _{2}^{2}-\alpha _{1}\alpha _{3}}{\alpha _{2}}%
\Delta _{22},\ \alpha _{1}\Delta _{13}+\alpha _{2}\Delta _{23}\right) , \\
\eta _{2} &=&\left( \alpha _{1}\Delta _{11},\ \alpha _{2}\Delta _{22},\
\alpha _{1}\Delta _{13}\right) , \\
\eta _{3} &=&\left( \alpha _{1}\Delta _{22},\ \alpha _{2}\Delta
_{22},0\right) ,
\end{eqnarray*}%
for some $\alpha _{1},\alpha _{2},\alpha _{3}\in \mathbb{C}$, where $\alpha _{2}\neq 0$ in $\eta _{1}$. The
automorphism group $\mathrm{Aut}\left( \mathbf{A}_{10}\right) $ consists of
the automorphisms $\phi $ given by a matrix of the following form:%
\begin{equation*}
\phi =%
\begin{pmatrix}
a_{11} & a_{12} & 0 \\ 
0 & a_{22} & 0 \\ 
0 & 0 & \sigma \sqrt{a_{11}}%
\end{pmatrix}%
:\sigma =\pm 1.
\end{equation*}%
The trivial case $\theta =0$ yields the superalgebra $\mathbf{S}_{24}$. For $%
\theta \neq 0$, we distinguish the following cases:

\begin{itemize}
\item $\theta =\eta _{1}$. If $\alpha _{1}=0$, we choose $\phi $ to be the
following automorphism:%
\begin{equation*}
\phi =%
\begin{pmatrix}
1 & -\frac{\alpha _{3}}{\alpha _{2}^{2}} & 0 \\ 
0 & \frac{1}{\alpha _{2}} & 0 \\ 
0 & 0 & 1%
\end{pmatrix}%
.
\end{equation*}%
Then $\theta \ast \phi =\left( \Delta _{12},\ \Delta _{22},\ \Delta
_{23}\right) $. Hence we get the superalgebra $\mathbf{S}_{25}$. For $\alpha
_{1}\neq 0$, we choose $\phi $ to be the first of the following matrices if $%
\alpha _{2}^{2}-\alpha _{1}\alpha _{3}=0$, or the second if $\alpha
_{2}^{2}-\alpha _{1}\alpha _{3}\neq 0$:%
\begin{equation*}
\begin{pmatrix}
\frac{1}{\alpha _{1}} & -\frac{\alpha _{2}}{\alpha _{1}} & 0 \\ 
0 & 1 & 0 \\ 
0 & 0 & \sqrt{\frac{1}{\alpha _{1}}}%
\end{pmatrix}%
,%
\begin{pmatrix}
\frac{1}{\alpha _{1}} & \frac{\alpha _{2}^{2}}{\alpha _{1}\left( \alpha
_{1}\alpha _{3}-\alpha _{2}^{2}\right) } & 0 \\ 
0 & \frac{\alpha _{2}}{\alpha _{2}^{2}-\alpha _{1}\alpha _{3}} & 0 \\ 
0 & 0 & \sqrt{\frac{1}{\alpha _{1}}}%
\end{pmatrix}%
.
\end{equation*}%
Then $\theta \ast \phi \ \in \left\{ \left( \Delta _{11},\ 0,\ \Delta
_{13}\right) ,\left( \Delta _{11},\ \Delta _{22},\ \Delta _{13}\right)
\right\} $. Thus we obtain the superalgebras $\mathbf{S}_{26},\mathbf{S}_{27}
$.

\item $\theta =\eta _{2}$. If $\alpha _{1}\alpha _{2}\neq 0$, we choose $%
\phi $ to be the following automorphism:%
\begin{equation*}
\phi =%
\begin{pmatrix}
\frac{1}{\alpha _{1}} & 0 & 0 \\ 
0 & \frac{1}{\alpha _{2}} & 0 \\ 
0 & 0 & \sqrt{\frac{1}{\alpha _{1}}}%
\end{pmatrix}%
.
\end{equation*}%
Then $\theta \ast \phi =\left( \Delta _{11},\ \Delta _{22},\ \Delta
_{13}\right) $. So we obtain the superalgebra $\mathbf{S}_{27}$. For $\alpha
_{1}\alpha _{2}=0$, we choose $\phi $ to be the first of the following
matrices if $\alpha _{1}\neq 0$, or the second if $\alpha _{1}=0$ (then $\alpha _{2}\neq 0$, since $\theta\neq 0$):%
\begin{equation*}
\begin{pmatrix}
\frac{1}{\alpha _{1}} & 0 & 0 \\ 
0 & 1 & 0 \\ 
0 & 0 & \sqrt{\frac{1}{\alpha _{1}}}%
\end{pmatrix}%
,%
\begin{pmatrix}
1 & 0 & 0 \\ 
0 & \frac{1}{\alpha _{2}} & 0 \\ 
0 & 0 & 1%
\end{pmatrix}%
.
\end{equation*}%
Then $\theta \ast \phi \ \in \left\{ \left( \Delta _{11},\ 0,\Delta
_{13}\right) ,\left( 0,\ \Delta _{22},\ 0\right) \right\} $. So we obtain
the superalgebras $\mathbf{S}_{26},\mathbf{S}_{28}$.

\item $\theta =\eta _{3}$. If $\alpha _{1}\alpha _{2}\neq 0$, we choose $%
\phi $ to be the following automorphism:%
\begin{equation*}
\phi =%
\begin{pmatrix}
1 & \frac{\alpha _{1}}{\alpha _{2}^{2}} & 0 \\ 
0 & \frac{1}{\alpha _{2}} & 0 \\ 
0 & 0 & 1%
\end{pmatrix}%
.
\end{equation*}%
Then $\theta \ast \phi =\left(0,\ \Delta _{22},\ 0\right) $. So
we obtain the superalgebra $\mathbf{S}_{28}$. For $\alpha _{1}\alpha _{2}=0$%
, we choose $\phi $ to be the first of the following matrices if $\alpha
_{1}\neq 0$, or the second if $\alpha _{1}=0$ (then $\alpha _{2}\neq 0$, since $\theta\neq 0$):%
\begin{equation*}
\begin{pmatrix}
\alpha _{1} & 0 & 0 \\ 
0 & 1 & 0 \\ 
0 & 0 & \sqrt{\alpha _{1}}%
\end{pmatrix}%
,%
\begin{pmatrix}
1 & 0 & 0 \\ 
0 & \frac{1}{\alpha _{2}} & 0 \\ 
0 & 0 & 1%
\end{pmatrix}%
.
\end{equation*}%
Then $\theta \ast \phi \ \in \left\{ \left( \Delta _{22},\ 0,0\right)
,\left( 0,\ \Delta _{22},\ 0\right) \right\} $. So we obtain the
superalgebras $\mathbf{S}_{29},\mathbf{S}_{28}$.
\end{itemize}

\end{enumerate}

\subsection{Corollaries}\label{sec:cor}

\begin{definition}
A superalgebra $\mathrm{A}$ is called \emph{assosymmetric} if for all $x,y,z\in\mathrm{A}$
\begin{equation}
    (x,y,z)=(-1)^{\bar{x}\bar{y}}(y,x,z)=(-1)^{\bar{y}\bar{z}}(x,z,y). \label{asso}
\end{equation}
\end{definition}

\begin{lemma}\label{lem:assosym}
A superalgebra $A$ over a field of characteristic different from $2$ and $3$ is assosymmetric if and only if it is semi-alternative and Lie-admissible.
\end{lemma}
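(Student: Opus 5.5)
We will prove both implications directly from the identities, working with homogeneous elements and keeping track of Koszul signs. The basic tool is the standard expansion of the super Jacobian of the supercommutator in terms of associators. Writing $J(x,y,z)$ for the graded cyclic sum $(-1)^{\bar x\bar z}[[x,y],z]+(-1)^{\bar y\bar x}[[y,z],x]+(-1)^{\bar z\bar y}[[z,x],y]$, expanding $[a,b]=\frac12\big(ab-(-1)^{\bar a\bar b}ba\big)$ gives
\[
4(-1)^{\bar x\bar z}J(x,y,z)=\sum_{\sigma\in S_3}\operatorname{sgn}(\sigma)\,\epsilon_\sigma\,(x_{\sigma(1)},x_{\sigma(2)},x_{\sigma(3)}),
\]
where $(x_1,x_2,x_3)=(x,y,z)$ and $\epsilon_\sigma$ is the Koszul sign of the permutation $\sigma$ of the homogeneous elements. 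This is the graded version of the classical identity; if the signs become unwieldy, we will instead verify it, and all later steps, on the Grassmann envelope, where everything reduces to the ungraded statement. The factor $\frac12$ is where characteristic $\neq 2$ is needed. Consequently $\mathrm{A}$ is Lie-admissible if and only if this graded alternating sum of associators vanishes.

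($\Rightarrow$) Assume \eqref{asso}. Apply the first equality to $(x,y,z)$ and then the second to $(y,x,z)$:
\[
(x,y,z)=(-1)^{\bar x\bar y}(y,x,z)=(-1)^{\bar x\bar y+\bar x\bar z}(y,z,x).
\]
This is exactly \eqref{semi-alt}. Moreover, \eqref{asso} says the associator is graded-symmetric, i.e. $\epsilon_\sigma(x_{\sigma(1)},x_{\sigma(2)},x_{\sigma(3)})=(x,y,z)$ for every $\sigma\in S_3$. The alternating sum above is therefore $\big(\sum_\sigma\operatorname{sgn}\sigma\big)(x,y,z)=0$, so $\mathrm{A}^-$ satisfies the super Jacobi identity.

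($\Leftarrow$) Assume $\mathrm{A}$ is semi-alternative and Lie-admissible. Identity \eqref{semi-alt} says that $\epsilon_\sigma(x_{\sigma(1)},x_{\sigma(2)},x_{\sigma(3)})$ is invariant under cyclic $\sigma$. Hence the three even permutations in the alternating sum each contribute $(x,y,z)$. The three odd permutations are cyclic shifts of $(x,z,y)$, so each contributes $(-1)^{\bar y\bar z}(x,z,y)$. Lie-admissibility then gives
\[
3\big((x,y,z)-(-1)^{\bar y\bar z}(x,z,y)\big)=0,
\]
and since the characteristic is not $3$, $(x,y,z)=(-1)^{\bar y\bar z}(x,z,y)$. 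This is the second equality in \eqref{asso}. For the first, use \eqref{semi-alt} in the form $(y,x,z)=(-1)^{\bar y\bar x+\bar y\bar z}(x,z,y)$ and substitute the transposition just obtained; this yields $(y,x,z)=(-1)^{\bar x\bar y}(x,y,z)$, which is equivalent to the first equality in \eqref{asso}.

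The main obstacle is the sign bookkeeping: establishing the displayed expansion of $J$ with the correct Koszul signs, and checking that cyclic invariance really matches the signs attached to the even and odd permutations. I would settle both points by passing to the Grassmann envelope $G(\mathrm{A})=G_{\bar0}\otimes \mathrm{A}_{\bar0}+G_{\bar1}\otimes \mathrm{A}_{\bar1}$. There the super identities \eqref{semi-alt}, \eqref{asso} and super Jacobi become the ordinary identities, and the classical ungraded argument applies verbatim.
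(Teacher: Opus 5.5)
Your proof is correct and follows the paper's argument. You expand the super-Jacobiator as the Koszul-signed alternating sum of the six associators; under assosymmetry this sum vanishes, and under semi-alternativity it collapses to $3$ times a difference of two associators, so characteristic $\neq 3$ gives one graded transposition and \eqref{semi-alt} then gives the other. The only cosmetic difference is the order: you extract the $(y,z)$-transposition $(x,y,z)=(-1)^{\bar y\bar z}(x,z,y)$ first and deduce the $(x,y)$ one, whereas the paper obtains $(x,y,z)=(-1)^{\bar x\bar y}(y,x,z)$ first.
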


\begin{proof}
    Let $J(x,y,z)=(-1)^{\bar x\bar z}[[x,y],z]+(-1)^{\bar x\bar y}[[y,z],x]+(-1)^{\bar y\bar z}[[z,x],y]$ be the super-Jacobiator of $A^-$; to avoid the factor $\frac{1}{4}$ we compute it with the unnormalised supercommutator $[x,y]=xy-(-1)^{\bar x\bar y}yx$, which does not affect the super-Jacobi identity. A direct calculation gives
\begin{align}
\begin{aligned}
J(x,y,z)
={}&
(-1)^{\bar x\bar z}(xy)z
-(-1)^{\bar x\bar z+\bar x\bar y}(yx)z
-(-1)^{\bar y\bar z}z(xy)
+(-1)^{\bar x\bar y+\bar y\bar z}z(yx)\\
&+(-1)^{\bar x\bar y}(yz)x
-(-1)^{\bar x\bar y+\bar y\bar z}(zy)x
-(-1)^{\bar x\bar z}x(yz)
+(-1)^{\bar x\bar z+\bar y\bar z}x(zy)\\
&+(-1)^{\bar y\bar z}(zx)y
-(-1)^{\bar y\bar z+\bar x\bar z}(xz)y
-(-1)^{\bar x\bar y}y(zx)
+(-1)^{\bar x\bar z+\bar x\bar y}y(xz).
\end{aligned}
\label{expansion}
\end{align}

Grouping terms by associators yields
\begin{align}
\begin{aligned}
J(x,y,z)
={}&
(-1)^{\bar x\bar z}(x,y,z)
-(-1)^{\bar x\bar z+\bar x\bar y}(y,x,z)
+(-1)^{\bar x\bar y}(y,z,x)\\
&-(-1)^{\bar x\bar y+\bar y\bar z}(z,y,x)
+(-1)^{\bar y\bar z}(z,x,y)
-(-1)^{\bar y\bar z+\bar x\bar z}(x,z,y).
\end{aligned}
\label{associator}
\end{align}

\paragraph{($\Rightarrow$)}
Assume $A$ is assosymmetric. Then from \eqref{asso} we have
\[
(x,y,z)=(-1)^{\bar x\bar y}(y,x,z), \qquad (y,z,x)=(-1)^{\bar y\bar z}(z,y,x), \qquad (z,x,y)=(-1)^{\bar x\bar z}(x,z,y).
\]
Substituting these relations into \eqref{associator}, we obtain
\(
J(x,y,z)=0
\) for all $x,y,z\in A$. Hence the supercommutator $A^-$ satisfies the super-Jacobi identity, and therefore $A$ is Lie-admissible. Moreover, applying the two identities of \eqref{asso} successively, we get
\[
(x,y,z)=(-1)^{\bar x\bar y}(y,x,z)=(-1)^{\bar x\bar y+\bar x\bar z}(y,z,x),
\]
i.e. $A$ is semi-alternative. Note that this implication holds over any field, and the converse implication uses only the invertibility of $3$.
\paragraph{($\Leftarrow$)}
Conversely, suppose $A$ is semi-alternative and Lie-admissible. From \eqref{semi-alt}, we obtain the following relations:
\begin{align*}
(x,y,z)
&=(-1)^{\bar x\bar y+\bar x\bar z}(y,z,x),
&
(z,x,y)
&=(-1)^{\bar x\bar z+\bar y\bar z}(x,y,z),
\\
(y,x,z)
&=(-1)^{\bar x\bar y+\bar y\bar z}(x,z,y),
&
(z,y,x)
&=(-1)^{\bar x\bar z+\bar y\bar z}(y,x,z).
\end{align*}
Since $A$ is Lie-admissible, we have
\(
J(x,y,z)=0.
\)
Using \eqref{associator} and substituting the above relations, we obtain
\begin{align*}
\begin{aligned}
0
={}&
(-1)^{\bar x\bar z}(x,y,z)
-(-1)^{\bar x\bar z+\bar x\bar y}(y,x,z)
+(-1)^{\bar x\bar y}
(-1)^{\bar x\bar y+\bar x\bar z}(x,y,z)\\
&\quad
-(-1)^{\bar x\bar y+\bar y\bar z}
(-1)^{\bar x\bar z+\bar y\bar z}(y,x,z)
+(-1)^{\bar y\bar z}
(-1)^{\bar x\bar z+\bar y\bar z}(x,y,z)
-(-1)^{\bar y\bar z+\bar x\bar z}
(-1)^{\bar x\bar y+\bar y\bar z}(y,x,z).
\end{aligned}
\end{align*}
Whence \begin{align*}
0
=
3(-1)^{\bar x\bar z}
\left(
(x,y,z)-(-1)^{\bar x\bar y}(y,x,z)
\right).
\end{align*}
Thus we obtain
\begin{equation}
(x,y,z)
=
(-1)^{\bar x\bar y}(y,x,z).
\label{asso-first}
\end{equation}
Combining the identity \((y,x,z)
=(-1)^{\bar x\bar y+\bar y\bar z}(x,z,y)\) with \eqref{asso-first} gives 
\begin{equation}
(x,y,z)=(-1)^{\bar y\bar z}(x,z,y).
\label{asso-second}
\end{equation}
Therefore,
\[
(x,y,z)
=
(-1)^{\bar x\bar y}(y,x,z)
=
(-1)^{\bar y\bar z}(x,z,y),
\]
which is precisely the assosymmetric identity \eqref{asso}. Hence, $A$ is assosymmetric.
\end{proof}

By the above lemma, every assosymmetric superalgebra is semi-alternative, and its supercommutator is a Lie superalgebra. Consequently, in view of Remarks~\ref{remark:Malcev-12-nonLie} and
\ref{remark:Malcev-21-nonLie} and the classifications given in
Theorems~A1 and~A2, we obtain the following corollaries.

\begin{corollary}\label{cor:asy12}
Let $A$ be a complex $3$-dimensional assosymmetric superalgebra of type $(1,2)$. Then $A$ is isomorphic to one of the superalgebras listed in Proposition~\ref{AC(1,2)} and Theorem~A1, except ${\rm S}_{06}$, ${\rm S}_{07}$, and ${\rm S}_{09}^{\alpha}$.
\end{corollary}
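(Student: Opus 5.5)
By Lemma~\ref{lem:assosym}, a complex superalgebra is assosymmetric exactly when it is semi-alternative and Lie-admissible. So the plan is to take the complete list of $3$-dimensional semi-alternative superalgebras of type $(1,2)$ given by Proposition~\ref{AC(1,2)} and Theorem~A1, and to keep those whose supercommutator superalgebra is a Lie superalgebra. Isomorphic superalgebras have isomorphic supercommutator superalgebras. Hence it suffices to record, for each listed representative, the isomorphism class of $\mathrm{S}^{-}$ among the superalgebras of Proposition~\ref{3-Malcev(1,2)}. This class is already determined in the proof of Theorem~A1, where every algebra was constructed as $(\mathrm{A},\ast_\theta)$ over a fixed Malcev superalgebra $\mathrm{A}$.

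First, the superalgebras of Proposition~\ref{AC(1,2)} are associative, hence assosymmetric. Indeed, all associators vanish, so \eqref{asso} holds trivially; alternatively, their supercommutator is trivial and hence Lie. Second, I go through the cases of the proof of Theorem~A1 and read off $\mathrm{S}^{-}$:
\begin{itemize}
\item $\mathrm{S}_{01}^{\alpha}$ comes from $\mathrm{A}_{01}$;
\item $\mathrm{S}_{02},\dots,\mathrm{S}_{05}$ come from $\mathrm{A}_{02}$;
\item $\mathrm{S}_{06},\mathrm{S}_{07}$ come from $\mathrm{A}_{05}$;
\item $\mathrm{S}_{08}^{\alpha}$ comes from $\mathrm{A}_{06}$;
\item $\mathrm{S}_{09}^{\alpha}$ comes from $\mathrm{A}_{07}$;
\item $\mathrm{S}_{10},\dots,\mathrm{S}_{16}$ come from $\mathrm{A}_{08}^{-1},\mathrm{A}_{08}^{0},\mathrm{A}_{08}^{1}$.
\end{itemize}
The basis rescalings in the footnote and in the proof (for example $f_2:=\frac{3(1+\alpha)}{4}f_2$ and $e_1:=\pm\frac12 e_1$) only rescale the bracket, so they do not change the isomorphism type of $\mathrm{S}^{-}$ or its Lie property. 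As a sanity check, one can compute $[x,y]=\frac12(xy-(-1)^{\bar x\bar y}yx)$ directly from the tables. For $\mathrm{S}_{06}$ this gives
\[
[e_1,f_2]=\tfrac12\Bigl(1-\tfrac{1+\sqrt3 i}{2}\Bigr)f_1\neq0
\quad\text{and}\quad
[f_1,f_2]=\tfrac12\Bigl(\tfrac{1+\sqrt3 i}{2}+1\Bigr)e_1\neq0,
\]
which is a rescaled copy of $\mathrm{A}_{05}$.

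Third, by Remark~\ref{remark:Malcev-12-nonLie}, $\mathrm{A}_{05}$ and $\mathrm{A}_{07}$ are exactly the non-Lie ones. So $\mathrm{S}_{06}$, $\mathrm{S}_{07}$ and every $\mathrm{S}_{09}^{\alpha}$ fail to be Lie-admissible and are excluded. All remaining algebras have a Lie supercommutator and are semi-alternative, so they are assosymmetric by the ($\Leftarrow$) direction of Lemma~\ref{lem:assosym}. This gives exactly the stated list.

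The only delicate point is that Lemma~\ref{lem:assosym} is applied to the superalgebra itself, while Remark~\ref{remark:Malcev-12-nonLie} speaks of the isomorphism classes $\mathrm{A}_{0i}$. One must therefore make sure that the Lie property is invariant under the rescalings used. It is, since rescaling a basis vector yields an isomorphic supercommutator, and the super-Jacobi identity is isomorphism-invariant. If one doubts the remark for $\mathrm{A}_{07}$, it can be confirmed directly with $x=y=z=f_2$:
\[
[[f_2,f_2],f_2]=[e_1,f_2]=f_1\neq0,
\]
whereas the super-Jacobi identity for three equal odd elements forces $3[[f_2,f_2],f_2]=0$.
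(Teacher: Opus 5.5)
Your proposal is correct and is essentially the paper's argument. Lemma~\ref{lem:assosym} reduces the question to Lie-admissibility, the proof of Theorem~A1 identifies $\mathrm{S}^{-}$ for each representative, and Remark~\ref{remark:Malcev-12-nonLie} singles out $\mathrm{A}_{05}$ and $\mathrm{A}_{07}$, i.e.\ exactly $\mathrm{S}_{06}$, $\mathrm{S}_{07}$ and $\mathrm{S}_{09}^{\alpha}$. The only slip is in your optional check for $\mathrm{S}_{06}$: since $f_2e_1=-\frac{1+\sqrt3 i}{2}f_1$, one gets $[e_1,f_2]=\frac12\bigl(1+\frac{1+\sqrt3 i}{2}\bigr)f_1=\frac{3+\sqrt3 i}{4}f_1$ rather than the value with a minus sign, which is still nonzero and changes nothing.
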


\begin{corollary}\label{cor:asy21}
Let $A$ be a complex $3$-dimensional assosymmetric superalgebra of type $(2,1)$. Then $A$ is isomorphic to one of the superalgebras listed in Proposition~\ref{AC(2,1)} and Theorem~A2, except $\mathbf S_{01}$, $\mathbf S_{02}$, $\mathbf S_{03}$, and $\mathbf S_{04}$.
\end{corollary}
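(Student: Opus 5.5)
The statement to prove is Corollary~\ref{cor:asy21}. By Lemma~\ref{lem:assosym}, a complex superalgebra is assosymmetric if and only if it is semi-alternative and its supercommutator superalgebra $A^{-}$ is a Lie superalgebra. So the plan is to go through the list of Theorem~A2, together with Proposition~\ref{AC(2,1)}, and keep exactly those superalgebras whose supercommutator is Lie.

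First, the superalgebras of Proposition~\ref{AC(2,1)} are associative, hence assosymmetric, and they all survive. Every remaining superalgebra $\mathbf S_{i}$ of Theorem~A2 arises in the proof as $(\mathrm A,\ast_\theta)$ with $\theta\in\mathrm Z^{2}(\mathrm A,\mathrm A)$ for one of the Malcev superalgebras $\mathrm A$ of Proposition~\ref{3-Malcev(2,1)}. Since $\theta$ is super-symmetric, the supercommutator of $x\ast_\theta y=[x,y]+\theta(x,y)$ is exactly $[\cdot,\cdot]$. Here I would take the normalisation with the factor $\frac12$, under which the supercommutator of $(\mathrm A,\ast_\theta)$ recovers $[\cdot,\cdot]$; the unnormalised version only rescales it, which does not affect the Lie property. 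Hence $\mathbf S_i^{-}\cong \mathrm A$, and this isomorphism type is preserved by the final basis changes listed in the footnote.

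Next, read off from the proof of Theorem~A2 which Malcev superalgebra each $\mathbf S_i$ comes from:
\begin{itemize}
\item $\mathbf S_{01}$--$\mathbf S_{04}$ come from $\mathbf A_{03}^{1}$;
\item $\mathbf S_{05}$--$\mathbf S_{16}$ and $\mathbf S_{30}$--$\mathbf S_{33}$ come from $\mathbf A_{05}^{\alpha}$ with $\alpha\in\{-1,0,1\}$;
\item $\mathbf S_{17}$--$\mathbf S_{23}$ come from $\mathbf A_{07}$;
\item $\mathbf S_{24}$--$\mathbf S_{29}$ come from $\mathbf A_{10}$.
\end{itemize}
By Remark~\ref{remark:Malcev-21-nonLie}, only $\mathbf A_{03}^{1}$ is non-Lie. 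Therefore, by Lemma~\ref{lem:assosym}, exactly $\mathbf S_{01},\dots,\mathbf S_{04}$ fail to be assosymmetric, and all others are assosymmetric.

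The main point needing care is this bookkeeping step: making sure each listed algebra's supercommutator is correctly identified after the normalising basis changes. One could also double-check directly that $\mathbf S_{01}^{-}$ violates super-Jacobi. The normalised supercommutator of $\mathbf S_{01}$ is $[e_1,e_2]=\frac12 e_1$, $[e_2,f_1]=\frac12 f_1$, $[f_1,f_1]=e_1$. Take the triple $(e_2,f_1,f_1)$. Then $[[e_2,f_1],f_1]=\frac12[f_1,f_1]=\frac12 e_1$, and after the Koszul signs the super-Jacobiator reduces to $[e_2,[f_1,f_1]]-2[[e_2,f_1],f_1]$. This equals $-\frac12 e_1-e_1=-\frac32 e_1\neq 0$, consistent with Remark~\ref{remark:Malcev-21-nonLie}.
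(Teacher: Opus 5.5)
Your proposal is correct and follows the paper's own argument. Lemma~\ref{lem:assosym} reduces assosymmetry to Lie-admissibility, and Remark~\ref{remark:Malcev-21-nonLie}, combined with the origin of each $\mathbf S_i$ in the proof of Theorem~A2, leaves exactly $\mathbf S_{01}$--$\mathbf S_{04}$ (those arising from $\mathbf A_{03}^{1}$) as non-assosymmetric; your assignment of each $\mathbf S_i$ to its Malcev superalgebra and your super-Jacobi check on $\mathbf S_{01}$ are both accurate.
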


Every associative superalgebra is semi-alternative. Checking the associativity of the superalgebras of Theorems~A1 and~A2 directly, we obtain the following corollaries.

\begin{corollary}\label{cor:as12}
Let $A$ be a complex $3$-dimensional associative superalgebra of type $(1,2)$. Then $A$ is isomorphic to one of the superalgebras listed in Proposition~\ref{AC(1,2)} and Theorem~A1, except ${\rm S}_{06}$, ${\rm S}_{07}$, and ${\rm S}_{09}^{\alpha}$. In particular, the associative and the assosymmetric superalgebras of type $(1,2)$ coincide.
\end{corollary}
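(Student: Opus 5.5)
Since every associative superalgebra is semi-alternative, any complex $3$-dimensional associative superalgebra of type $(1,2)$ appears, up to isomorphism, in the list of Proposition~\ref{AC(1,2)} together with Theorem~A1. The plan is therefore to decide, for each entry of that combined list, whether the superassociator vanishes identically. The superalgebras of Proposition~\ref{AC(1,2)} are associative by construction, so only the superalgebras ${\rm S}_{01}^{\alpha},\dots,{\rm S}_{16}$ need to be examined.

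\textbf{Exclusions.} First I will use Corollary~\ref{cor:asy12}: associative superalgebras are assosymmetric, since all associators vanish. Thus ${\rm S}_{06}$, ${\rm S}_{07}$ and ${\rm S}_{09}^{\alpha}$ are not associative. Concretely, their supercommutators are the non-Lie Malcev superalgebras ${\rm A}_{05}$ and ${\rm A}_{07}$ (Remark~\ref{remark:Malcev-12-nonLie}). An associative superalgebra is Lie-admissible, so none of these three can be associative.

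\textbf{Verifications.} Next I will check directly that each remaining superalgebra satisfies $(x,y,z)=0$ on basis triples. The checks group as follows.
\begin{itemize}
\item ${\rm S}_{01}^{\alpha}$, ${\rm S}_{02}$ and ${\rm S}_{05}$: every product lands in $\langle e_1\rangle$, and $e_1$ annihilates everything. Hence all products of length three vanish.
\item ${\rm S}_{08}^{\alpha}$: products land in $\langle f_1\rangle$, which again annihilates everything.
\item ${\rm S}_{04}$: the only length-three products to examine involve $f_2f_2=e_1$ and $e_1f_2=f_2e_1=f_1$. We have $(f_2f_2)f_2=e_1f_2=f_1$ and $f_2(f_2f_2)=f_2e_1=f_1$, and every other triple gives zero.
\item ${\rm S}_{03}$ and ${\rm S}_{10}$--${\rm S}_{16}$: $e_1$ is an idempotent acting on each odd basis vector $f_i$ from the left and/or right by $1$ or $0$, plus possibly $f_2f_2=e_1$. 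For triples containing two or three copies of $e_1$, associativity reduces to the mixed-side compatibility of the left and right actions of $e_1$, which holds in every case. For ${\rm S}_{03}$ one also checks $(f_2f_2)f_2=e_1f_2=f_2=f_2e_1=f_2(f_2f_2)$, as well as $(f_2f_2)e_1=e_1=f_2(f_2e_1)$ and the symmetric triple.
\end{itemize}

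\textbf{Conclusion and main obstacle.} With these checks, the associative list equals the list of Corollary~\ref{cor:asy12}, which proves the stated coincidence. The main obstacle is only bookkeeping: making sure no basis triple is missed among the idempotent-type superalgebras. I would organize the check by the number of occurrences of $e_1$ to keep it exhaustive.
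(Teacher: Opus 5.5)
Your proposal is correct and follows the paper's route. Associative superalgebras are semi-alternative, so one checks associativity directly on the list of Theorem~A1, and the coincidence with the assosymmetric list then follows from Corollary~\ref{cor:asy12}. Your use of Lie-admissibility to discard ${\rm S}_{06}$, ${\rm S}_{07}$, ${\rm S}_{09}^{\alpha}$ is equivalent to a direct witness such as $(f_2f_2)f_2-f_2(f_2f_2)=2f_1$ in ${\rm S}_{09}^{\alpha}$.

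Two pieces of bookkeeping are left implicit, and both are harmless. In ${\rm S}_{10}$--${\rm S}_{16}$, triples with at most one $e_1$ vanish on both sides because all odd--odd products are zero. In ${\rm S}_{03}$, the remaining triple gives $(f_2e_1)f_2=e_1=f_2(e_1f_2)$.
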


\begin{corollary}\label{cor:as21}
Let $A$ be a complex $3$-dimensional associative superalgebra of type $(2,1)$. Then $A$ is isomorphic to one of the superalgebras listed in Proposition~\ref{AC(2,1)} and Theorem~A2, except
$\mathbf S_{01}$, $\mathbf S_{02}$, $\mathbf S_{03}$, $\mathbf S_{04}$, $\mathbf S_{07}$, $\mathbf S_{08}$, $\mathbf S_{13}$, $\mathbf S_{14}$, $\mathbf S_{15}$, $\mathbf S_{16}$, $\mathbf S_{32}$, and $\mathbf S_{33}$.
\end{corollary}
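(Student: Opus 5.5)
Every associative superalgebra is semi-alternative, so every complex $3$-dimensional associative superalgebra of type $(2,1)$ already appears in Proposition~\ref{AC(2,1)} or Theorem~A2. The corollary therefore reduces to a finite check: for each superalgebra on those lists, decide whether all associators $(x,y,z)$ on basis elements vanish.

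The superalgebras of Proposition~\ref{AC(2,1)} are associative by definition, so they stay. For the list of Theorem~A2, I would not test every table from scratch but use Corollary~\ref{cor:asy21} as a filter. An associative superalgebra is assosymmetric, so $\mathbf S_{01},\dots,\mathbf S_{04}$, which are not assosymmetric, are not associative. This matches the fact that their supercommutator $\mathbf A_{03}^{1}$ is non-Lie, and associative superalgebras are Lie-admissible. For each of the remaining tables I would then evaluate the associators on triples of basis elements. By semi-alternativity, $(x,y,z)=\pm(y,z,x)$, so it suffices to check one representative per cyclic orbit of triples of basis elements. That leaves about eleven triples per algebra.

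For the exceptions I would exhibit a single nonzero associator, for instance
\[
(e_2,e_2,e_2)=(e_1+e_2)e_2-e_2(e_1+e_2)=e_1+e_2-e_2=e_1\neq 0
\]
in $\mathbf S_{07}$ and $\mathbf S_{14}$, where $e_2e_1=0$. A similar computation gives $(e_2,e_2,e_2)=-e_1$ in $\mathbf S_{08}$ and $\mathbf S_{16}$, where $e_1e_2=0$. For $\mathbf S_{13}$, $\mathbf S_{15}$, $\mathbf S_{32}$ and $\mathbf S_{33}$ the same triple works, since $e_2e_2=e_1+e_2$ combined with one-sided multiplication by $e_2$ on $e_1$ forces $(e_2,e_2,e_2)=\pm e_1$. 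So all twelve exceptions are explained by the single associator $(e_2,e_2,e_2)$, which comes from the deformation parameter $\alpha\neq 0$ in cases 3--5 of the proof of Theorem~A2.

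For the remaining algebras, namely $\mathbf S_{05}$, $\mathbf S_{06}$, $\mathbf S_{09}$--$\mathbf S_{12}$, $\mathbf S_{17}$--$\mathbf S_{31}$, I would verify all associators vanish. Many are visibly associative: one-sided unit or idempotent actions in $\mathbf S_{18}$--$\mathbf S_{23}$, and the algebras with $f_1f_1=e_1$ together with a unit-like element. The main obstacle is the bookkeeping of signs in the more intricate tables such as $\mathbf S_{17}$ (where $f_1e_1=-f_1$) and $\mathbf S_{25}$--$\mathbf S_{27}$. There I would check, for example, $(f_1,f_1,f_1)=e_1f_1-f_1e_1$, and the triples involving $e_1,e_1,f_1$ explicitly, possibly confirming by machine as done for Lemma~\ref{lem:malcev}.
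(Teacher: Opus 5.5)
Your proposal is correct and takes the same route as the paper: associative superalgebras are semi-alternative, so one checks associativity of the algebras in Proposition~\ref{AC(2,1)} and Theorem~A2 directly, and your exclusion of $\mathbf S_{01}$--$\mathbf S_{04}$ via Corollary~\ref{cor:asy21} (their supercommutator $\mathbf A_{03}^{1}$ is not Lie) and the reduction to eleven cyclic triples are legitimate shortcuts. Two small slips: in your display the middle step should read $(2e_1+e_2)-(e_1+e_2)=e_1$, and $(e_2,e_2,e_2)$ does not witness all twelve exceptions, since it vanishes in $\mathbf S_{01}$ and $\mathbf S_{02}$ (where $e_2e_2=e_2$); these two rest on your Lie-admissibility argument, or on $(f_1,f_1,e_2)=\pm e_1$.
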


\begin{definition}
An associative superalgebra $A$ is called a \emph{$\mathfrak{perm}$ superalgebra} if for all $x,y,z\in A$
\begin{equation*}
(xy)z=(-1)^{\bar{x}\bar{y}}(yx)z.
\end{equation*}
\end{definition}

Every associative supercommutative superalgebra is a $\mathfrak{perm}$ superalgebra. Checking the $\mathfrak{perm}$ identity for the superalgebras of Corollaries~\ref{cor:as12} and~\ref{cor:as21}, we obtain the following corollaries.

\begin{corollary}\label{cor:perm12}
Let $A$ be a complex $3$-dimensional $\mathfrak{perm}$ superalgebra of type $(1,2)$. Then $A$ is isomorphic to one of the superalgebras listed in Proposition~\ref{AC(1,2)}, or to one of the superalgebras
${\rm S}_{01}^{\alpha}$, ${\rm S}_{02}$, ${\rm S}_{05}$, ${\rm S}_{08}^{\alpha}$, ${\rm S}_{11}$, ${\rm S}_{13}$, and ${\rm S}_{15}$ of Theorem~A1.
\end{corollary}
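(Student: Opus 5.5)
Every $\mathfrak{perm}$ superalgebra is associative by definition, so Corollary~\ref{cor:as12} already tells us that $A$ is isomorphic to a superalgebra in Proposition~\ref{AC(1,2)} or to one of ${\rm S}_{01}^{\alpha}$, ${\rm S}_{02}$, ${\rm S}_{03}$, ${\rm S}_{04}$, ${\rm S}_{05}$, ${\rm S}_{08}^{\alpha}$, ${\rm S}_{10},\dots,{\rm S}_{16}$. The superalgebras of Proposition~\ref{AC(1,2)} are supercommutative. For them $(xy)z=(-1)^{\bar x\bar y}(yx)z$ holds trivially, so they are all $\mathfrak{perm}$. The proof therefore reduces to a finite check of the identity $(xy)z=(-1)^{\bar x\bar y}(yx)z$ on homogeneous basis triples for each remaining superalgebra of Theorem~A1.

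I would organize the check around the super-commutator. The identity says that $[x,y]\,z=0$ for all homogeneous $x,y,z$, where $[x,y]=xy-(-1)^{\bar x\bar y}yx$ is taken unnormalised. Equivalently, the ideal spanned by all supercommutators must act trivially from the left, i.e. $\mathrm{S}^{-}\cdot \mathrm{S}=0$ with $\mathrm{S}^{-}$ meaning the span of the $[x,y]$. In each case $\mathrm{S}^{-}$ is read off from the table, and I only need to multiply its spanning vectors on the right by $e_1,f_1,f_2$.

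\textbf{Algebras that are $\mathfrak{perm}$.}
\begin{itemize}
\item ${\rm S}_{01}^{\alpha}$: commutators lie in $\langle e_1\rangle$, and $e_1$ annihilates everything.
\item ${\rm S}_{02}$: it is supercommutative. Note that $f_2f_2=e_1$ is consistent, since odd elements square symmetrically.
\item ${\rm S}_{05}$: commutators lie in $\langle e_1\rangle$, and $e_1$ annihilates everything.
\item ${\rm S}_{08}^{\alpha}$: commutators lie in $\langle f_1\rangle$, and $f_1\cdot\mathrm{S}=0$.
\item ${\rm S}_{11}$: the only commutator is $[e_1,f_1]=f_1$, and $f_1$ multiplies trivially on the left.
\item ${\rm S}_{13}$ and ${\rm S}_{15}$: the commutators are again spanned by left-only images $f_1$ (and $f_2$), which annihilate from the left.
\end{itemize}

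\textbf{Algebras that are not $\mathfrak{perm}$.} For each of these I would exhibit an explicit violating triple.
\begin{itemize}
\item ${\rm S}_{03}$: it is supercommutative but has a unit-like $e_1$, and $[f_2,f_2]=2e_1$ gives $(f_2f_2)e_1=e_1\neq -(f_2f_2)e_1$. More precisely, take $x=y=f_2$, $z=e_1$: the identity reads $(f_2f_2)e_1=-(f_2f_2)e_1$, which forces $e_1=0$, a contradiction.
\item ${\rm S}_{04}$: the same argument with $x=y=f_2$, $z=f_2$ gives $(f_2f_2)f_2=e_1f_2=f_1\ne 0$.
\item ${\rm S}_{10}$, ${\rm S}_{12}$, ${\rm S}_{14}$, ${\rm S}_{16}$: these contain $f_ie_1=f_i$ with $e_1f_i=0$. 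Then $(f_ie_1)e_1=f_i$ while $(e_1f_i)e_1=0$, so the triple $(f_i,e_1,e_1)$ violates the identity.
\end{itemize}

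The only real obstacle is the sign bookkeeping for odd–odd pairs. For $x=y$ odd the identity becomes $2(xx)z=0$, and this is precisely what kills ${\rm S}_{03}$ and ${\rm S}_{04}$ while leaving ${\rm S}_{02}$ alive, because there $e_1$ annihilates. I would double-check these odd–odd cases carefully rather than rely on the commutator shortcut.
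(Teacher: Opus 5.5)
Your route is the paper's own: since a $\mathfrak{perm}$ superalgebra is associative, you filter the list of Corollary~\ref{cor:as12} by checking $[x,y]z=0$ on homogeneous basis elements. All of your verdicts are correct, including the violating triples:
\begin{itemize}
\item $(f_2,f_2,e_1)$ for ${\rm S}_{03}$;
\item $(f_2,f_2,f_2)$ for ${\rm S}_{04}$;
\item $(f_i,e_1,e_1)$ for ${\rm S}_{10}$, ${\rm S}_{12}$, ${\rm S}_{14}$, ${\rm S}_{16}$.
\end{itemize}

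One justification, however, is false. ${\rm S}_{02}$ is not supercommutative, and neither is ${\rm S}_{03}$. For an odd $f$, supercommutativity means $ff=-ff$, i.e. $f^2=0$ in characteristic $\neq 2$. So ``odd elements square symmetrically'' is backwards: with your unnormalised bracket, $[f_2,f_2]=2f_2f_2=2e_1\neq 0$. This is exactly why ${\rm S}_{02}$ appears in Theorem~A1 with ${\rm S}_{02}^{-}={\rm A}_{02}$, rather than in Proposition~\ref{AC(1,2)}.

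The correct argument for ${\rm S}_{02}$ is the one you already use for ${\rm S}_{01}^{\alpha}$ and ${\rm S}_{05}$. The supercommutators of ${\rm S}_{02}$ span $\langle e_1\rangle$, and $e_1{\rm S}_{02}=0$. Your closing remark, that $2(xx)z=0$ for odd $x$ is harmless in ${\rm S}_{02}$ because $e_1$ annihilates, states precisely this and contradicts your earlier claim. With that step replaced, the proof is complete and matches the paper's direct check.
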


\begin{corollary}\label{cor:perm21}
Let $A$ be a complex $3$-dimensional $\mathfrak{perm}$ superalgebra of type $(2,1)$. Then $A$ is isomorphic to one of the superalgebras listed in Proposition~\ref{AC(2,1)}, or to one of the superalgebras
$\mathbf S_{06}$, $\mathbf S_{11}$, $\mathbf S_{12}$, $\mathbf S_{18}$, $\mathbf S_{20}$, $\mathbf S_{22}$, $\mathbf S_{24}$, $\mathbf S_{28}$, and $\mathbf S_{29}$ of Theorem~A2.
\end{corollary}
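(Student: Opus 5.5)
The statement to prove is Corollary~\ref{cor:perm21}. My plan is to start from Corollary~\ref{cor:as21}. Every $\mathfrak{perm}$ superalgebra is by definition associative, so a $3$-dimensional $\mathfrak{perm}$ superalgebra of type $(2,1)$ is isomorphic to one of the superalgebras of Proposition~\ref{AC(2,1)} or to one of the associative ones in Theorem~A2. The surviving members of Theorem~A2 are $\mathbf S_{05},\mathbf S_{06}$, $\mathbf S_{09}$--$\mathbf S_{12}$, $\mathbf S_{17}$--$\mathbf S_{31}$. The algebras of Proposition~\ref{AC(2,1)} are supercommutative, so $(-1)^{\bar x\bar y}yx=xy$ and the $\mathfrak{perm}$ identity holds trivially; they all stay. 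It then remains to test the identity $(xy)z=(-1)^{\bar x\bar y}(yx)z$ on the remaining list.

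To organise the check, I would rewrite the identity as $[x,y]\,z=0$ with the unnormalised supercommutator, i.e.\ $(\mathrm S^{-})^{2}\cdot \mathrm S=0$. It suffices to compute $\mathrm S^{-}$ on basis pairs and multiply each nonzero commutator on the right by $e_1,e_2,f_1$. For an algebra to be $\mathfrak{perm}$, every nonzero $[e_i,e_j]$, $[e_i,f_1]$, $[f_1,f_1]$ must lie in the right annihilator-type set $\{u: u\mathrm S=0\}$.

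\textbf{Cases that are $\mathfrak{perm}$.}
\begin{itemize}
\item $\mathbf S_{06}$: $[e_2,e_1]=e_1$ and $[e_2,f_1]=f_1$. Both $e_1$ and $f_1$ kill everything from the left, so the identity holds.
\item $\mathbf S_{11}$, $\mathbf S_{12}$: the only commutator is $[e_2,e_1]=e_1$, and $e_1\mathrm S=0$.
\item $\mathbf S_{18}$, $\mathbf S_{20}$, $\mathbf S_{22}$: the only commutator is $[e_1,f_1]=f_1$, and $f_1\mathrm S=0$.
\item $\mathbf S_{24}$, $\mathbf S_{28}$, $\mathbf S_{29}$: the only commutator is $[f_1,f_1]=2e_1$, and $e_1\mathrm S=0$.
\end{itemize}

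\textbf{Cases that fail.} I exhibit one violating triple each.
\begin{itemize}
\item $\mathbf S_{05}$, $\mathbf S_{09}$, $\mathbf S_{10}$: $[e_1,e_2]=e_1$ but $e_1e_2=e_1\neq0$.
\item $\mathbf S_{17}$: $[e_1,f_1]=2f_1$ and $f_1e_2=f_1\neq0$.
\item $\mathbf S_{19}$, $\mathbf S_{21}$, $\mathbf S_{23}$: $[e_1,f_1]=-f_1$ and $f_1e_1=f_1\neq0$.
\item $\mathbf S_{25}$: $[f_1,f_1]=2e_1$ and $e_1e_2=e_1\neq0$.
\item $\mathbf S_{26}$, $\mathbf S_{27}$: $[f_1,f_1]=2e_1$ and $e_1e_1=e_1\neq0$.
\item $\mathbf S_{30}$: $[e_2,e_1]=e_1$ is harmless, but $[e_2,f_1]=-f_1$ and $f_1e_2=f_1\neq0$.
\item $\mathbf S_{31}$: $[e_1,e_2]=e_1$ and $e_1e_2=e_1\neq0$.
\end{itemize}

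This leaves exactly $\mathbf S_{06},\mathbf S_{11},\mathbf S_{12},\mathbf S_{18},\mathbf S_{20},\mathbf S_{22},\mathbf S_{24},\mathbf S_{28},\mathbf S_{29}$, together with Proposition~\ref{AC(2,1)}.

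The only real obstacle is bookkeeping: reading each multiplication table correctly, including the sign conventions for odd–odd commutators, where $[f_1,f_1]=2f_1f_1$. The reduction $[x,y]z=0$ makes each check a finite, essentially one-line inspection. Pairwise non-isomorphism is inherited from Theorem~A2 and Proposition~\ref{AC(2,1)}.
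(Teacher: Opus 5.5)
Your proposal is correct and takes the same route as the paper. You restrict to the associative list of Corollary~\ref{cor:as21}, keep Proposition~\ref{AC(2,1)} because associative supercommutative superalgebras are automatically $\mathfrak{perm}$, and check $(xy)z=(-1)^{\bar x\bar y}(yx)z$ directly on the remaining members of Theorem~A2. Your reformulation as $[x,y]z=0$ with the unnormalised supercommutator is only a tidy way to organise that check, and your verdicts, including the witnesses for $\mathbf S_{17}$, $\mathbf S_{25}$ and $\mathbf S_{30}$, reproduce exactly the paper's list.
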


\section{The geometric classification of superalgebras}
\subsection{Preliminaries}\label{sec:method}

All spaces in this section are complex.
Let $\mathbb{V}=\mathbb{V}_{\bar0}\oplus\mathbb{V}_{\bar1}$ be a $\mathbb{Z}_2$-graded vector space of type $(n,m)$, i.e. ${\rm dim}\,\mathbb{V}_{\bar0}=n$ and ${\rm dim}\,\mathbb{V}_{\bar1}=m$, and let us fix a basis $e_1,\dots,e_n,f_1,\dots,f_m$ of $\mathbb{V}$; we also write $x_1,\dots,x_{n+m}$ for this basis, where $x_i=e_i$ for $i\le n$ and $x_{n+j}=f_j$ for $j\le m$.
A superalgebra structure on $\mathbb{V}$ is an even bilinear map $\mu\colon\mathbb{V}\times\mathbb{V}\to\mathbb{V}$, i.e. $\mu(\mathbb{V}_{\bar i},\mathbb{V}_{\bar j})\subseteq\mathbb{V}_{\bar i+\bar j}$. It is determined by its structure constants $c_{ij}^k\in\mathbb{C}$ such that $\mu(x_i,x_j)=\sum\limits_{k=1}^{n+m}c_{ij}^kx_k$, where $c_{ij}^k=0$ whenever $\bar x_i+\bar x_j\neq\bar x_k$. Hence the set of all superalgebra structures on $\mathbb{V}$ is an affine space, and a subset of it is {\it Zariski-closed} if it can be defined by a set of polynomial equations in the variables $c_{ij}^k$.
If $\mathfrak{V}$ is a variety of superalgebras defined by a family of polynomial super-identities, then the structures from $\mathfrak{V}$ form a Zariski-closed subset $\mathfrak{V}_{(n,m)}$.

The group $G={\rm GL}(\mathbb{V}_{\bar0})\times{\rm GL}(\mathbb{V}_{\bar1})$ of even invertible linear maps of $\mathbb{V}$, which has dimension $n^2+m^2$, acts on $\mathfrak{V}_{(n,m)}$ by conjugations:
$$(g*\mu)(x,y)=g\mu\big(g^{-1}x,g^{-1}y\big)$$
for $x,y\in\mathbb{V}$, $\mu\in\mathfrak{V}_{(n,m)}$ and $g\in G$.
Thus, $\mathfrak{V}_{(n,m)}$ is decomposed into $G$-orbits that correspond to the isomorphism classes of superalgebras; the algebraic classification of Section~1 is the description of these orbits.
Let ${\mathcal O}(\mu)$ denote the orbit of $\mu$ and $\overline{{\mathcal O}(\mu)}$ denote its Zariski closure. In what follows we identify a superalgebra with the corresponding structure, and we denote by $\mathbb{C}^{n+m}$ the superalgebra with zero multiplication.

Let $A$ and $B$ be two superalgebras of type $(n,m)$ represented by the structures $\mu$ and $\lambda$ from $\mathfrak{V}_{(n,m)}$, respectively.
We say that $A$ degenerates to $B$ and write $A\to B$ if $\lambda\in\overline{{\mathcal O}(\mu)}$. Note that in this case we have $\overline{{\mathcal O}(\lambda)}\subseteq\overline{{\mathcal O}(\mu)}$; hence, the definition of a degeneration does not depend on the choice of $\mu$ and $\lambda$.
If $A\not\cong B$, then the assertion $A\to B$ is called a {\it proper degeneration}. We write $A\not\to B$ if $\lambda\not\in\overline{{\mathcal O}(\mu)}$.

The superalgebra $A$ is {\it rigid} in $\mathfrak{V}_{(n,m)}$ if ${\mathcal O}(\mu)$ is an open subset of $\mathfrak{V}_{(n,m)}$.
Recall that a subset of a variety is called irreducible if it cannot be represented as a union of two proper closed subsets, and a maximal irreducible closed subset of a variety is called an {\it irreducible component}. In particular, $A$ is rigid in $\mathfrak{V}_{(n,m)}$ iff $\overline{{\mathcal O}(\mu)}$ is an irreducible component of $\mathfrak{V}_{(n,m)}$.
It is well known that any affine variety can be represented as a finite union of its irreducible components in a unique way. We denote by ${\rm Rig}(\mathfrak{V}_{(n,m)})$ the set of rigid superalgebras in $\mathfrak{V}_{(n,m)}$.
We denote by $\mathfrak{Der}_{\bar0}(A)$ the space of even derivations of $A$, and given spaces $\mathbb{U}$ and $\mathbb{W}$, we write simply $\mathbb{U}>\mathbb{W}$ instead of ${\rm dim}\,\mathbb{U}>{\rm dim}\,\mathbb{W}$.

\begin{lemma}\label{lem_der}
We have ${\rm dim}\,{\mathcal O}(\mu)=n^2+m^2-{\rm dim}\,\mathfrak{Der}_{\bar0}(\mu)$. In particular, if $A\to B$ is a proper degeneration, then $\mathfrak{Der}_{\bar0}(A)<\mathfrak{Der}_{\bar0}(B)$.
\end{lemma}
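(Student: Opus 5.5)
The plan is to run the standard orbit–stabilizer argument for the algebraic group $G={\rm GL}(\mathbb{V}_{\bar0})\times{\rm GL}(\mathbb{V}_{\bar1})$, of dimension $n^2+m^2$, acting on the affine space of superalgebra structures. The orbit ${\mathcal O}(\mu)$ is the image of the orbit map $G\to\mathfrak{V}_{(n,m)}$, $g\mapsto g*\mu$. Over $\mathbb{C}$ this orbit is a smooth locally closed subvariety, and the map $G/{\rm Stab}(\mu)\to{\mathcal O}(\mu)$ is bijective. Since we are in characteristic zero, it is an isomorphism of varieties. Hence ${\rm dim}\,{\mathcal O}(\mu)=n^2+m^2-{\rm dim}\,{\rm Stab}(\mu)$. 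Here ${\rm Stab}(\mu)$ is exactly the group of even automorphisms of the superalgebra $(\mathbb{V},\mu)$.

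Next I would identify the dimension of the stabilizer with the dimension of its Lie algebra. In characteristic zero algebraic groups are smooth, so ${\rm dim}\,{\rm Stab}(\mu)={\rm dim}\,{\rm Lie}({\rm Stab}(\mu))$. I would compute this Lie algebra by differentiating the condition $g*\mu=\mu$ at the identity. Write $g=1+tD$ with $D$ even and work modulo $t^2$. Then
\[
(1+tD)\mu\big((1-tD)x,(1-tD)y\big)=\mu(x,y)+t\big(D\mu(x,y)-\mu(Dx,y)-\mu(x,Dy)\big).
\]
So the tangent condition is precisely that $D$ is an even derivation, and ${\rm Lie}({\rm Stab}(\mu))=\mathfrak{Der}_{\bar0}(\mu)$. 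Combining the two steps gives the formula ${\rm dim}\,{\mathcal O}(\mu)=n^2+m^2-{\rm dim}\,\mathfrak{Der}_{\bar0}(\mu)$.

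For the second assertion, suppose $A\to B$ is a proper degeneration, with structures $\mu$ and $\lambda$. Then $\lambda\in\overline{{\mathcal O}(\mu)}\setminus{\mathcal O}(\mu)$. The orbit ${\mathcal O}(\mu)$ is open in its closure, since an orbit is constructible and hence contains a dense open subset of its closure, and it is homogeneous. Therefore the boundary $\overline{{\mathcal O}(\mu)}\setminus{\mathcal O}(\mu)$ is closed and $G$-stable. Because $\overline{{\mathcal O}(\mu)}$ is irreducible, this boundary has strictly smaller dimension. It contains ${\mathcal O}(\lambda)$, so ${\rm dim}\,{\mathcal O}(\lambda)<{\rm dim}\,{\mathcal O}(\mu)$. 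By the formula this gives $\mathfrak{Der}_{\bar0}(A)<\mathfrak{Der}_{\bar0}(B)$.

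The main obstacle is the identification of the Lie algebra of the stabilizer with $\mathfrak{Der}_{\bar0}(\mu)$. The tangent-space computation only shows the inclusion ${\rm Lie}\subseteq\mathfrak{Der}_{\bar0}$ without invoking smoothness of the scheme-theoretic stabilizer. To get the reverse inclusion, I would use that the scheme-theoretic stabilizer is smooth in characteristic $0$ (Cartier's theorem). Alternatively, I would exponentiate directly: for a derivation $D$, the maps $\exp(tD)$ are automorphisms, so $D$ is tangent to ${\rm Stab}(\mu)$. These facts are standard, and the lemma is a well-known result cited throughout the degeneration literature.
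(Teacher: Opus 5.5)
Your proposal is correct and follows the paper's own argument: identify the stabilizer of $\mu$ in $G={\rm GL}(\mathbb{V}_{\bar0})\times{\rm GL}(\mathbb{V}_{\bar1})$ with the group of even automorphisms, whose Lie algebra is $\mathfrak{Der}_{\bar0}(\mu)$, and then use that the boundary $\overline{{\mathcal O}(\mu)}\setminus{\mathcal O}(\mu)$ is a closed $G$-stable set of strictly smaller dimension. You add details the paper leaves implicit: the first-order computation, Cartier's theorem or exponentiation for the reverse inclusion $\mathfrak{Der}_{\bar0}(\mu)\subseteq{\rm Lie}$, and the irreducibility of the orbit closure.
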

\begin{proof}
The stabilizer of $\mu$ in $G$ is the group of even automorphisms of the superalgebra, whose Lie algebra is $\mathfrak{Der}_{\bar0}(\mu)$; hence ${\rm dim}\,{\mathcal O}(\mu)=n^2+m^2-{\rm dim}\,\mathfrak{Der}_{\bar0}(\mu)$. The orbit ${\mathcal O}(\mu)$ is open in its closure, and the boundary $\overline{{\mathcal O}(\mu)}\setminus{\mathcal O}(\mu)$ is a closed $G$-stable set of smaller dimension; thus ${\rm dim}\,{\mathcal O}(\lambda)<{\rm dim}\,{\mathcal O}(\mu)$ for $\lambda\in\overline{{\mathcal O}(\mu)}\setminus{\mathcal O}(\mu)$.
\end{proof}

By Lemma~\ref{lem_der}, an orbit closure does not contain other orbits of the same or larger dimension. If $A\to C$ and $C\to B$, then $A\to B$. It is easy to see that any superalgebra degenerates to the superalgebra with zero multiplication.
In all the cases considered below, $\mathfrak{V}_{(n,m)}$ is a finite union of orbits and of one-parameter families of orbits. Hence $\mathfrak{V}_{(n,m)}$ is the union of the closures of the orbits of some superalgebras $A_1,\dots,A_p$ and of the closures of the unions of the orbits of some families $\mathcal{F}_1,\dots,\mathcal{F}_q$; these closures are irreducible, and they are exactly the irreducible components of $\mathfrak{V}_{(n,m)}$ if none of them is contained in another one. Thus, to obtain the geometric classification it is enough (a) to show that every superalgebra of $\mathfrak{V}_{(n,m)}$ belongs to one of these closures, for which we give a degeneration (or a chain of degenerations) from one of the $A_i$ or $\mathcal{F}_j$, and (b) to prove that none of these closures is contained in another one, i.e. to prove the non-degenerations between the $A_i$ and $\mathcal{F}_j$.

\medskip

{\it Degenerations.} To prove degenerations, we construct families of matrices parametrized by $t$. Namely, let $A$ and $B$ be two superalgebras represented by the structures $\mu$ and $\lambda$, and let $c_{ij}^k$ be the structure constants of $\lambda$ in the basis $e_1,\dots,e_n,f_1,\dots,f_m$. If there exist even vectors ${\rm E}^t_1,\dots,{\rm E}^t_n\in\mathbb{V}_{\bar0}$ and odd vectors ${\rm F}^t_1,\dots,{\rm F}^t_m\in\mathbb{V}_{\bar1}$ ($t\in\mathbb{C}^*$) whose coordinates are functions of $t$, which form a basis of $\mathbb{V}$ for any small $t\neq0$, and such that the structure constants $c_{ij}^k(t)$ of $\mu$ in the basis ${\rm E}^t_1,\dots,{\rm E}^t_n,{\rm F}^t_1,\dots,{\rm F}^t_m$ satisfy $c_{ij}^k(t)\to c_{ij}^k$ as $t\to0$, then $A\to B$. In this case ${\rm E}^t_1,\dots,{\rm F}^t_m$ is called a {\it parametrized basis} for $A\to B$.
In the tables of degenerations the vectors ${\rm E}^t_i$, ${\rm F}^t_j$ are written in the basis $e_1,\dots,e_n,f_1,\dots,f_m$ of the superalgebra $A$ from Theorems~A1 and~A2.

{\it Families of superalgebras.} The varieties under consideration contain one-parameter families of superalgebras $\{\mu^{\alpha}\}_{\alpha\in\mathbb{C}}$. We write $\{\mu^{\alpha}\}_{\alpha\in\mathbb{C}}\to\lambda$ if $\lambda\in\overline{\bigcup_\alpha{\mathcal O}(\mu^{\alpha})}$; for a family target, $\mu\to\{\lambda^{\beta}\}_{\beta\in\mathbb{C}}$ means that $\mu\to\lambda^{\beta}$ for every $\beta$. A parametrized basis for $\{\mu^{\alpha}\}_{\alpha\in\mathbb{C}}\to\lambda$ is a parametrized basis ${\rm E}^t_1,\dots,{\rm F}^t_m$ together with a member $\mu^{\alpha(t)}$ of the family depending on $t$ such that the structure constants of $\mu^{\alpha(t)}$ in the basis ${\rm E}^t_1,\dots,{\rm F}^t_m$ tend to the structure constants of $\lambda$; then $\{\mu^{\alpha}\}_{\alpha\in\mathbb{C}}\to\lambda$. Note that $\{\mu^{\alpha}\}_{\alpha\in\mathbb{C}}\to\lambda$ does not imply that some member $\mu^{\alpha}$ degenerates to $\lambda$: if ${\rm dim}\,{\mathcal O}(\mu^{\alpha})\le{\rm dim}\,{\mathcal O}(\lambda)$ for all $\alpha$ and $\lambda$ is not isomorphic to a member of the family, then no single member $\mu^{\alpha}$ degenerates to $\lambda$ by Lemma~\ref{lem_der}, although $\{\mu^{\alpha}\}_{\alpha\in\mathbb{C}}\to\lambda$ may hold.

\medskip

{\it Non-degenerations.} The main tools are the following two lemmas.

\begin{lemma}\label{main}
Let $\mathcal{B}$ be a Borel subgroup of $G$ and $\mathcal{R}\subseteq\mathfrak{V}_{(n,m)}$ be a closed subset.
If $A\to B$ and $A$ can be represented by a structure $\mu$ such that $b*\mu\in\mathcal{R}$ for all $b\in\mathcal{B}$ (for example, $\mu\in\mathcal{R}$ and $\mathcal{R}$ is $\mathcal{B}$-stable), then there is $\lambda\in\mathcal{R}$ that represents $B$. The same holds for a family $A=\{\mu^{\alpha}\}_{\alpha\in\mathbb{C}}$ (i.e. if $\lambda\in\overline{\bigcup_{\alpha}{\mathcal O}(\mu^{\alpha})}$) if every member of the family can be represented by such a structure.
\end{lemma}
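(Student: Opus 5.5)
The plan is the standard Borel-orbit argument. Since $A\to B$, the structure $\lambda$ representing $B$ lies in $\overline{{\mathcal O}(\mu)}=\overline{G*\mu}$. I would use the Iwasawa-type decomposition $G=K\mathcal{B}$, where $K$ is a maximal compact subgroup of $G={\rm GL}(\mathbb{V}_{\bar0})\times{\rm GL}(\mathbb{V}_{\bar1})$, namely the product of the two unitary groups. Alternatively, one can use the fact that $G/\mathcal{B}$ is a projective (complete) variety. Either way, the goal is to show
\[
\overline{G*\mu}=K*\overline{\mathcal{B}*\mu},
\]
or at least that $\overline{G*\mu}\subseteq G*\overline{\mathcal{B}*\mu}$. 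Here the closures may be taken in the Euclidean topology: orbits are constructible, so their Zariski and Euclidean closures agree.

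For the key step, write $G*\mu=K*(\mathcal{B}*\mu)$. Then $K*\overline{\mathcal{B}*\mu}$ is closed, being the image of the compact set $K$ times a closed set under a continuous action. The standard argument is as follows. Take a sequence $k_n b_n*\mu\to\lambda$ and pass to a subsequence with $k_n\to k\in K$. Then $b_n*\mu=k_n^{-1}*(k_nb_n*\mu)\to k^{-1}*\lambda$, so $k^{-1}*\lambda\in\overline{\mathcal{B}*\mu}$. By hypothesis $\mathcal{B}*\mu\subseteq\mathcal{R}$, and $\mathcal{R}$ is Zariski-closed, hence also Euclidean-closed, so $\overline{\mathcal{B}*\mu}\subseteq\mathcal{R}$. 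Therefore $\lambda':=k^{-1}*\lambda\in\mathcal{R}$. It represents $B$ because it lies in the $G$-orbit of $\lambda$. The parenthetical case is immediate: if $\mu\in\mathcal{R}$ and $\mathcal{R}$ is $\mathcal{B}$-stable, then $b*\mu\in\mathcal{R}$ for all $b\in\mathcal{B}$.

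For families, the same argument applies to $\overline{\bigcup_\alpha G*\mu^\alpha}$. Any point in this closure is a limit of $k_nb_n*\mu^{\alpha_n}$, where each $\mu^{\alpha_n}$ is chosen as the representative satisfying $\mathcal{B}*\mu^{\alpha_n}\subseteq\mathcal{R}$. The compactness extraction of $k_n\to k$ is unchanged. It gives $b_n*\mu^{\alpha_n}\to k^{-1}*\lambda$, and the limit lies in $\mathcal{R}$ because $\mathcal{R}$ is closed. Note that no closure of the parameter set is needed here: the limit point is controlled entirely by $\mathcal{R}$.

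The main subtlety is justifying the passage between the Zariski closure, used in the definition of degeneration, and the Euclidean closure, used in the compactness argument. The union over a one-parameter family of orbits is also constructible, being the image of $G\times\mathbb{C}$ under a morphism, so its Zariski and Euclidean closures coincide as well. The other point to check is the decomposition $G=K\mathcal{B}$ for the product group. This follows factorwise from the Gram--Schmidt (QR) decomposition, with $\mathcal{B}$ taken as upper triangular matrices in a suitable basis. Any Borel subgroup is conjugate to this one, and replacing $K$ by its conjugate handles the general case.
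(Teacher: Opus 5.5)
Your argument is correct, but it takes a different route from the paper.

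\textbf{The paper's route.} The paper stays in the Zariski topology throughout. It replaces $\mathcal{R}$ by the closed $\mathcal{B}$-stable set $\mathcal{R}'=\bigcap_{b\in\mathcal{B}}b^{-1}*\mathcal{R}$, which contains $\mu$. It then shows that $G*\mathcal{R}'$ is closed: this set is the image of the closed set $\{(g\mathcal{B},\nu)\mid g^{-1}*\nu\in\mathcal{R}'\}\subseteq G/\mathcal{B}\times\mathfrak{V}_{(n,m)}$ under the projection, and the projection is closed because the flag variety $G/\mathcal{B}$ is complete. The inclusion $\overline{{\mathcal O}(\mu)}\subseteq G*\mathcal{R}'$ is then immediate.

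\textbf{Your route.} You use the analytic counterpart of completeness: the factorization $G=K\mathcal{B}$ with $K=U(n)\times U(m)$ compact, so that $G/\mathcal{B}$ is a continuous image of a compact group. You combine this with sequential compactness and the theorem that Zariski and Euclidean closures of constructible sets coincide. This is more hands-on, and it needs no auxiliary set $\mathcal{R}'$, since you use $\overline{\mathcal{B}*\mu}\subseteq\mathcal{R}$ directly.

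\textbf{What the paper's version buys.} It works over any algebraically closed field. More importantly, it handles families with no hypothesis on how $\mu^{\alpha}$ depends on $\alpha$, because $G*\mathcal{R}'$ is a closed set containing every orbit that meets $\mathcal{R}'$. In your family case, the comparison theorem requires $\bigcup_{\alpha}{\mathcal O}(\mu^{\alpha})$ to be constructible, i.e.\ the parametrization must be algebraic. That holds for ${\rm S}_{01}^{\alpha}$, ${\rm S}_{08}^{\alpha}$ and ${\rm S}_{09}^{\alpha}$, but not for an arbitrary family: for a non-constructible set the Zariski closure can be much larger than the Euclidean one.

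\textbf{Removing the restriction.} Run your compactness argument on $K*\mathcal{R}'=G*\mathcal{R}'$ instead. It shows this set is Euclidean-closed. By Chevalley's theorem the set is also constructible, hence Zariski-closed, which recovers the paper's conclusion in full generality.

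\textbf{A small wording point.} ``Image of a compact set times a closed set under a continuous action'' is not by itself a reason for closedness. It is the sequence argument you give right afterwards that actually proves $K*\overline{\mathcal{B}*\mu}$ is closed.
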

\begin{proof}
The set $\mathcal{R}'=\{\nu\mid b*\nu\in\mathcal{R}\mbox{ for all }b\in\mathcal{B}\}=\bigcap_{b\in\mathcal{B}}b^{-1}*\mathcal{R}$ is closed, $\mathcal{B}$-stable, contains $\mu$ and is contained in $\mathcal{R}$. The set $G*\mathcal{R}'$ is the image of the closed subset $\big\{(g\mathcal{B},\nu)\mid g^{-1}*\nu\in\mathcal{R}'\big\}$ of $G/\mathcal{B}\times\mathfrak{V}_{(n,m)}$ under the projection to the second factor. Since the flag variety $G/\mathcal{B}$ is complete, this projection is closed, so $G*\mathcal{R}'$ is a closed set containing ${\mathcal O}(\mu)$; hence it contains $\overline{{\mathcal O}(\mu)}\ni\lambda'$ for every structure $\lambda'$ representing $B$, i.e. $g*\lambda'\in\mathcal{R}'\subseteq\mathcal{R}$ for some $g\in G$.
\end{proof}

{\it Borel subgroups and the sets $\mathcal{R}$.} Let $x_1,x_2,x_3$ be a basis of a superalgebra $A$ of dimension $3$: for type $(1,2)$, the vector $x_1$ is even and $x_2,x_3$ are odd; for type $(2,1)$, the vectors $x_1,x_2$ are even and $x_3$ is odd. We denote by $c_{ij}^k$ the structure constants of $A$ in this basis, $x_ix_j=\sum_{k=1}^{3}c_{ij}^kx_k$, put $A_i=\langle x_i,\dots,x_3\rangle$ for $i=1,2,3$ and $A_4=0$, and write $A=\langle x_1,x_2,x_3\rangle$. The condition $A_iA_j\subseteq A_k$ means that $c_{i'j'}^{l}=0$ for all $i'\ge i$, $j'\ge j$ and $l<k$; in particular, $A_iA_j=0$ means $A_iA_j\subseteq A_4$. As the Borel subgroup $\mathcal{B}$ of $G$ we always take the group of upper triangular changes of basis from $G$
$$x_i\mapsto \sum_{j\ge i}b_{ij}x_j \qquad (b_{ii}\neq0,\ b_{ij}=0 \mbox{ if } x_i \mbox{ and } x_j \mbox{ have different parities}),$$
i.e. $x_1\mapsto b_{11}x_1$, $x_2\mapsto b_{22}x_2+b_{23}x_3$, $x_3\mapsto b_{33}x_3$ for type $(1,2)$, and $x_1\mapsto b_{11}x_1+b_{12}x_2$, $x_2\mapsto b_{22}x_2$, $x_3\mapsto b_{33}x_3$ for type $(2,1)$.

All the sets used in the proofs of non-degenerations have the form
$$\mathcal{R}=\big\{A\mid A=\langle x_1,x_2,x_3\rangle,\ \mbox{conditions}\big\},$$
where the conditions are a system of linear equations in the structure constants $c_{ij}^k$: conditions of the form $A_iA_j\subseteq A_k$ (or $c_{ij}^k=0$) and linear relations between the structure constants. In each case the set $\mathcal{Z}$ of the structures satisfying the conditions is $\mathcal{B}$-stable, i.e. if the structure constants of $A$ in a basis $x_1,x_2,x_3$ satisfy the conditions, then the structure constants of $A$ in every basis obtained from $x_1,x_2,x_3$ by a change of basis from $\mathcal{B}$ satisfy them too. We write $A\in\mathcal{R}$ if $A$ has a basis $x_1,x_2,x_3$ satisfying the conditions (that is, $A$ is represented by a structure from $\mathcal{Z}$), and $B\cap\mathcal{R}=\varnothing$ if $B$ has no such basis. By Lemma~\ref{main} (applied to $\mathcal{Z}$), if $A\in\mathcal{R}$ and $B\cap\mathcal{R}=\varnothing$, then $A\not\to B$; if every member $A^{\alpha}$ of a family belongs to $\mathcal{R}$, then $\{A^{\alpha}\}_{\alpha\in\mathbb{C}}\not\to B$.

Since $\mathcal{Z}$ is $\mathcal{B}$-stable, to prove $B\cap\mathcal{R}=\varnothing$ it is enough to check representatives of the cosets $\mathcal{B}\backslash G$: if $e_1,f_1,f_2$ (resp. $e_1,e_2,f_1$) is the basis of $B$ from Theorem~A1 (resp. Theorem~A2), then it suffices to check that neither the basis $x_1=e_1$, $x_2=f_1$, $x_3=f_2$ nor the bases $x_1=e_1$, $x_2=f_2$, $x_3=f_1+sf_2$, $s\in\mathbb{C}$ (resp. $x_1=e_1$, $x_2=e_2$, $x_3=f_1$ and $x_1=e_2$, $x_2=e_1+se_2$, $x_3=f_1$, $s\in\mathbb{C}$) satisfy the conditions. For a family $B=\{B^{\beta}\}$ this gives a system of polynomial equations in $s$ and $\beta$, which determines the members of the family having a basis satisfying the conditions. The $\mathcal{B}$-stability of $\mathcal{Z}$ and the absence of solutions of these systems were verified by computer (the solution sets were computed by Gr\"obner bases).

{\it The proofs of non-degenerations.} Let $X$ and $Y$ be two different sets from the list in (b) above, i.e. closures of orbits of non-isomorphic superalgebras or closures of unions of orbits of families, and suppose $Y\subseteq X$. Both sets are irreducible, so ${\rm dim}\,Y\le{\rm dim}\,X$. If ${\rm dim}\,Y={\rm dim}\,X$, then $X=Y$; the orbit (resp. the union of the orbits of the family) defining $X$ and the one defining $Y$ are dense constructible subsets of $X$, hence they intersect, which is impossible since the corresponding superalgebras are not isomorphic. Therefore it is enough to prove $A\not\to B$ only for the pairs with ${\rm dim}\,A>{\rm dim}\,B$, where the dimension of a family is the dimension of the closure of the union of its orbits. In the tables of non-degenerations they are grouped by the first superalgebra: each row lists a superalgebra (or a family) $A$ and superalgebras $B$ such that $A\not\to B$, and gives a set $\mathcal{R}$ as above together with a basis $x_1,x_2,x_3$ of $A$ (for a family, of every member of the family) satisfying the conditions; the orbits of all the listed $B$ do not meet $\mathcal{R}$, so $A\not\to B$ by Lemma~\ref{main}.

\medskip

{\it Subvarieties.} If $\mathfrak{W}\subseteq\mathfrak{V}$ is a subvariety, then $\mathfrak{W}_{(n,m)}$ is a closed $G$-stable subset of $\mathfrak{V}_{(n,m)}$; in particular, $\overline{{\mathcal O}(\mu)}\subseteq\mathfrak{W}_{(n,m)}$ for every $\mu\in\mathfrak{W}_{(n,m)}$.
Hence the degenerations in $\mathfrak{W}_{(n,m)}$ are exactly the degenerations in $\mathfrak{V}_{(n,m)}$ between superalgebras from $\mathfrak{W}$. We have the inclusions
\[
\begin{array}{c}
\{\text{associative supercommutative}\}\subseteq\{\mathfrak{perm}\}\subseteq\{\text{associative}\}\\[2pt]
\subseteq\{\text{assosymmetric}\}\subseteq\{\text{semi-alternative}\},
\end{array}
\]
so all the degenerations and non-degenerations used for the subvarieties are degenerations and non-degenerations of $3$-dimensional semi-alternative superalgebras (Sections~\ref{sec:deg12}, \ref{sec:deg21} and~\ref{sec:subvar}).
We denote by $\mathfrak{SA}_{(n,m)}$, $\mathfrak{AsSym}_{(n,m)}$, $\mathfrak{As}_{(n,m)}$, $\mathfrak{Perm}_{(n,m)}$ and $\mathfrak{ACom}_{(n,m)}$ the varieties of semi-alternative, assosymmetric, associative, $\mathfrak{perm}$ and associative supercommutative superalgebras of type $(n,m)$, respectively.

\subsection{The geometric classification of semi-alternative superalgebras of type \texorpdfstring{$(1,2)$}{(1,2)}}\label{sec:deg12}
The semi-alternative superalgebras of type $(1,2)$ are listed in Theorem~A1 (and Proposition~\ref{AC(1,2)}); here ${\rm S}_{01}^{\alpha}$, ${\rm S}_{08}^{\alpha}$ and ${\rm S}_{09}^{\alpha}$ are one-parameter families, and $\{{\rm S}_{01}^{\alpha}\}_{\alpha\in\mathbb{C}}$, $\{{\rm S}_{08}^{\alpha}\}_{\alpha\in\mathbb{C}}$, $\{{\rm S}_{09}^{\alpha}\}_{\alpha\in\mathbb{C}}$ denote the families themselves. Recall that ${\rm S}_{01}^{\alpha}\cong{\rm S}_{01}^{-\alpha}$.

\begin{theoremB1}\label{theoremB1}
The variety $\mathfrak{SA}_{(1,2)}$ has dimension $4$ and $14$ irreducible components:
$$
\begin{array}{@{}l@{\quad}l@{\quad}l@{\quad}l@{\quad}l@{}}
\mathcal{C}_{01}=\overline{{\mathcal O}({\rm R}_{19})}; & \mathcal{C}_{04}=\overline{{\mathcal O}({\rm S}_{03})}; & \mathcal{C}_{07}=\overline{\bigcup_{\alpha}{\mathcal O}({\rm S}_{09}^{\alpha})}; & \mathcal{C}_{10}=\overline{{\mathcal O}({\rm S}_{12})}; & \mathcal{C}_{13}=\overline{{\mathcal O}({\rm S}_{15})};\\[4pt]
\mathcal{C}_{02}=\overline{{\mathcal O}({\rm R}_{24})}; & \mathcal{C}_{05}=\overline{{\mathcal O}({\rm S}_{06})}; & \mathcal{C}_{08}=\overline{{\mathcal O}({\rm S}_{10})}; & \mathcal{C}_{11}=\overline{{\mathcal O}({\rm S}_{13})}; & \mathcal{C}_{14}=\overline{{\mathcal O}({\rm S}_{16})}.\\[4pt]
\mathcal{C}_{03}=\overline{\bigcup_{\alpha}{\mathcal O}({\rm S}_{01}^{\alpha})}; & \mathcal{C}_{06}=\overline{{\mathcal O}({\rm S}_{07})}; & \mathcal{C}_{09}=\overline{{\mathcal O}({\rm S}_{11})}; & \mathcal{C}_{12}=\overline{{\mathcal O}({\rm S}_{14})};
\end{array}
$$
${\rm Rig}(\mathfrak{SA}_{(1,2)})=\big\{{\rm R}_{19},\allowbreak\ {\rm R}_{24},\allowbreak\ {\rm S}_{03},\allowbreak\ {\rm S}_{06},\allowbreak\ {\rm S}_{07},\allowbreak\ {\rm S}_{10},\allowbreak\ {\rm S}_{11},\allowbreak\ {\rm S}_{12},\allowbreak\ {\rm S}_{13},\allowbreak\ {\rm S}_{14},\allowbreak\ {\rm S}_{15},\allowbreak\ {\rm S}_{16}\big\}.$
\end{theoremB1}

\begin{proof}
By Lemma~\ref{lem_der}, the dimensions of the orbits of the superalgebras and families defining the components are as follows: 

\begin{center}
${\rm dim}\,\overline{\bigcup_{\alpha}{\mathcal O}({\rm S}_{01}^{\alpha})}=\allowbreak {\rm dim}\,{\mathcal O}({\rm S}_{03})=\allowbreak {\rm dim}\,{\mathcal O}({\rm S}_{06})=\allowbreak {\rm dim}\,{\mathcal O}({\rm S}_{07})=\allowbreak {\rm dim}\,\overline{\bigcup_{\alpha}{\mathcal O}({\rm S}_{09}^{\alpha})}=4$, \\ 
${\rm dim}\,{\mathcal O}({\rm S}_{10})=\allowbreak {\rm dim}\,{\mathcal O}({\rm S}_{11})=\allowbreak {\rm dim}\,{\mathcal O}({\rm S}_{12})=\allowbreak {\rm dim}\,{\mathcal O}({\rm S}_{13})=\allowbreak {\rm dim}\,{\mathcal O}({\rm S}_{14})=3$, \\ ${\rm dim}\,{\mathcal O}({\rm R}_{19})=\allowbreak {\rm dim}\,{\mathcal O}({\rm R}_{24})=\allowbreak {\rm dim}\,{\mathcal O}({\rm S}_{15})=\allowbreak {\rm dim}\,{\mathcal O}({\rm S}_{16})=1$.
\end{center}

\noindent{\bf Degenerations.} Every superalgebra of type $(1,2)$ that is not listed in the theorem is obtained from one of the listed superalgebras or families by one of the degenerations of Table~\ref{tab:deg12} or by a chain of them (and the superalgebra with zero multiplication is a degeneration of every superalgebra); hence $\mathfrak{SA}_{(1,2)}=\bigcup_i\mathcal{C}_i$.
\begingroup
\small
\setlength{\tabcolsep}{4pt}
\begin{longtable}{|r@{\;}c@{\;}l|l|l|l|}
\caption{Degenerations showing that every superalgebra of type $(1,2)$ lies in one of the irreducible components of $\mathfrak{SA}_{(1,2)}$.}\label{tab:deg12}\\
\hline
\multicolumn{3}{|c|}{Degeneration} & \multicolumn{3}{c|}{Parametrized basis} \\
\hline
\endfirsthead
\hline
\multicolumn{3}{|c|}{Degeneration} & \multicolumn{3}{c|}{Parametrized basis} \\
\hline
\endhead
${\rm R}_{17}$ & $\to$ & ${\rm R}_{06}^{0}$ & ${\rm E}^t_1=te_{1}$ & ${\rm F}^t_1=f_{1} + f_{2}$ & ${\rm F}^t_2=tf_{1}$ \\
\hline
${\rm S}_{01}^{\alpha}$ & $\to$ & ${\rm S}_{02}$ & ${\rm E}^t_1=2e_{1}$ & ${\rm F}^t_1=tf_{1}$ & ${\rm F}^t_2=f_{1} + f_{2}$ \\
\hline
${\rm S}_{01}^{t^{-1}}$ & $\to$ & ${\rm S}_{05}$ & ${\rm E}^t_1=t^{-1}e_{1}$ & ${\rm F}^t_1=f_{1}$ & ${\rm F}^t_2=\frac{1}{2}t^{-1}f_{1} + f_{2}$ \\
\hline
${\rm S}_{03}$ & $\to$ & ${\rm R}_{17}$ & ${\rm E}^t_1=e_{1}$ & ${\rm F}^t_1=tf_{2}$ & ${\rm F}^t_2=f_{1}$ \\
\hline
${\rm S}_{03}$ & $\to$ & ${\rm S}_{04}$ & ${\rm E}^t_1=t^{2}e_{1}$ & ${\rm F}^t_1=t^{3}f_{2}$ & ${\rm F}^t_2=f_{1} + tf_{2}$ \\
\hline
${\rm S}_{05}$ & $\to$ & ${\rm R}_{04}^{0}$ & ${\rm E}^t_1=e_{1}$ & ${\rm F}^t_1=t^{-1}f_{1}$ & ${\rm F}^t_2=tf_{2}$ \\
\hline
${\rm S}_{09}^{\alpha}$ & $\to$ & ${\rm S}_{08}^{\alpha}$ & ${\rm E}^t_1=e_{1}$ & ${\rm F}^t_1=tf_{1}$ & ${\rm F}^t_2=tf_{2}$ \\
\hline
\end{longtable}
\endgroup
\medskip

\noindent{\bf Non-degenerations.} By Section~\ref{sec:method}, it remains to prove $A\not\to B$ for the superalgebras and families $A$, $B$ defining different components with ${\rm dim}\,A>{\rm dim}\,B$. All these  non-degenerations are given in Table~\ref{tab:nondeg12}.
\begingroup
\footnotesize
\renewcommand{\arraystretch}{1.3}\setlength{\tabcolsep}{2pt}
\begin{longtable}{|>{\raggedright\arraybackslash}p{4.7cm}|>{\raggedright\arraybackslash}p{\dimexpr\textwidth-4.7cm-4\tabcolsep-3\arrayrulewidth\relax}|}
\caption{Non-degenerations between the superalgebras and families defining the irreducible components of $\mathfrak{SA}_{(1,2)}$.}\label{tab:nondeg12}\\
\hline
Non-degeneration & Reason\\
\hline
\endfirsthead
\hline
Non-degeneration & Reason\\
\hline
\endhead
${\rm S}_{03}\not\to\left\{\begin{array}{@{}l@{}}{\rm R}_{19},\ {\rm R}_{24},\ {\rm S}_{10},\\ {\rm S}_{11},\ {\rm S}_{12},\ {\rm S}_{13},\\ {\rm S}_{14},\ {\rm S}_{15},\ {\rm S}_{16}\end{array}\right\}$ & $\mathcal{R}=\left\{\begin{array}{@{}l@{}}A\mid A=\langle x_1,x_2,x_3\rangle,\ A_{1}A_{3}=0,\ A_{3}A_{1}=0,\ c_{11}^{1}=c_{21}^{2},\\ c_{12}^{2}=c_{21}^{2},\ c_{12}^{3}=c_{21}^{3}\end{array}\right\}$\newline \mbox{${\rm S}_{03}\in\mathcal{R}$ (take $x_{1}=e_1$, $x_{2}=f_2$, $x_{3}=f_1$),} \mbox{but $B\cap\mathcal{R}=\varnothing$ for all listed $B$.} \\
\hline
${\rm S}_{06}\not\to\left\{\begin{array}{@{}l@{}}{\rm R}_{19},\ {\rm R}_{24},\ {\rm S}_{10},\\ {\rm S}_{11},\ {\rm S}_{12},\ {\rm S}_{13},\\ {\rm S}_{14},\ {\rm S}_{15},\ {\rm S}_{16}\end{array}\right\}$ & $\mathcal{R}=\left\{\begin{array}{@{}l@{}}A\mid A=\langle x_1,x_2,x_3\rangle,\ A_{3}A_{3}\subseteq A_{2},\ c_{11}^{1}=0,\\ 2c_{12}^{2}=(1-\sqrt{3}i)c_{31}^{3},\ 2c_{12}^{3}=(-1+\sqrt{3}i)c_{21}^{3},\\ 2c_{13}^{2}=(-1+\sqrt{3}i)c_{31}^{2},\ 2c_{13}^{3}=(-1+\sqrt{3}i)c_{31}^{3},\\ c_{21}^{2}=-c_{31}^{3},\ 2c_{23}^{1}=(1+\sqrt{3}i)c_{32}^{1}\end{array}\right\}$\newline \mbox{${\rm S}_{06}\in\mathcal{R}$ (take $x_{1}=e_1$, $x_{2}=f_1$, $x_{3}=f_2$),} \mbox{but $B\cap\mathcal{R}=\varnothing$ for all listed $B$.} \\
\hline
${\rm S}_{07}\not\to\left\{\begin{array}{@{}l@{}}{\rm R}_{19},\ {\rm R}_{24},\ {\rm S}_{10},\\ {\rm S}_{11},\ {\rm S}_{12},\ {\rm S}_{13},\\ {\rm S}_{14},\ {\rm S}_{15},\ {\rm S}_{16}\end{array}\right\}$ & $\mathcal{R}=\left\{\begin{array}{@{}l@{}}A\mid A=\langle x_1,x_2,x_3\rangle,\ A_{3}A_{3}\subseteq A_{2},\ c_{11}^{1}=0,\\ 2c_{12}^{2}=(1+\sqrt{3}i)c_{31}^{3},\ 2c_{12}^{3}=(-1-\sqrt{3}i)c_{21}^{3},\\ 2c_{13}^{2}=(-1-\sqrt{3}i)c_{31}^{2},\ 2c_{13}^{3}=(-1-\sqrt{3}i)c_{31}^{3},\\ c_{21}^{2}=-c_{31}^{3},\ 2c_{23}^{1}=(1-\sqrt{3}i)c_{32}^{1}\end{array}\right\}$\newline \mbox{${\rm S}_{07}\in\mathcal{R}$ (take $x_{1}=e_1$, $x_{2}=f_1$, $x_{3}=f_2$),} \mbox{but $B\cap\mathcal{R}=\varnothing$ for all listed $B$.} \\
\hline
$\{{\rm S}_{01}^{\alpha}\}_{\alpha\in\mathbb{C}}\not\to\left\{\begin{array}{@{}l@{}}{\rm R}_{19},\ {\rm R}_{24},\ {\rm S}_{10},\\ {\rm S}_{11},\ {\rm S}_{12},\ {\rm S}_{13},\\ {\rm S}_{14},\ {\rm S}_{15},\ {\rm S}_{16}\end{array}\right\}$ & $\mathcal{R}=\left\{\begin{array}{@{}l@{}}A\mid A=\langle x_1,x_2,x_3\rangle,\ c_{11}^{1}=c_{12}^{3}=c_{13}^{2}=c_{21}^{3}=c_{31}^{2}=0,\\ c_{12}^{2}=c_{13}^{3},\ c_{21}^{2}=c_{31}^{3}\end{array}\right\}$\newline \mbox{${\rm S}_{01}^{\alpha}\in\mathcal{R}$ for all $\alpha$ (take $x_{1}=e_1$, $x_{2}=f_1$, $x_{3}=f_2$),} \mbox{but $B\cap\mathcal{R}=\varnothing$ for all listed $B$.} \\
\hline
$\{{\rm S}_{09}^{\alpha}\}_{\alpha\in\mathbb{C}}\not\to\left\{\begin{array}{@{}l@{}}{\rm R}_{19},\ {\rm R}_{24},\ {\rm S}_{10},\\ {\rm S}_{11},\ {\rm S}_{12},\ {\rm S}_{13},\\ {\rm S}_{14},\ {\rm S}_{15},\ {\rm S}_{16}\end{array}\right\}$ & $\mathcal{R}=\left\{\begin{array}{@{}l@{}}A\mid A=\langle x_1,x_2,x_3\rangle,\ A_{1}A_{3}\subseteq A_{2},\ A_{3}A_{1}\subseteq A_{2},\ c_{11}^{1}=0\end{array}\right\}$\newline \mbox{${\rm S}_{09}^{\alpha}\in\mathcal{R}$ for all $\alpha$ (take $x_{1}=e_1$, $x_{2}=f_2$, $x_{3}=f_1$),} \mbox{but $B\cap\mathcal{R}=\varnothing$ for all listed $B$.} \\
\hline
${\rm S}_{10}\not\to\left\{\begin{array}{@{}l@{}}{\rm R}_{19},\ {\rm R}_{24},\ {\rm S}_{15},\  {\rm S}_{16}\end{array}\right\}$ & $\mathcal{R}=\left\{\begin{array}{@{}l@{}}A\mid A=\langle x_1,x_2,x_3\rangle,\ A_{1}A_{2}\subseteq A_{2},\ c_{11}^{1}=c_{21}^{2}+c_{31}^{3},\\ c_{12}^{2}=c_{31}^{3},\ c_{12}^{3}=-c_{21}^{3},\ c_{13}^{2}=-c_{31}^{2},\ c_{13}^{3}=c_{21}^{2}\end{array}\right\}$\newline \mbox{${\rm S}_{10}\in\mathcal{R}$ (take $x_{1}=e_1$, $x_{2}=f_2$, $x_{3}=f_1+f_2$),} \mbox{but $B\cap\mathcal{R}=\varnothing$ for all listed $B$.} \\
\hline
${\rm S}_{11}\not\to\left\{\begin{array}{@{}l@{}}{\rm R}_{19},\ {\rm R}_{24},\ {\rm S}_{15},\  {\rm S}_{16}\end{array}\right\}$ & $\mathcal{R}=\left\{\begin{array}{@{}l@{}}A\mid A=\langle x_1,x_2,x_3\rangle,\ A_{1}A_{2}\subseteq A_{2},\ A_{1}A_{3}\subseteq A_{3},\ c_{12}^{3}=0,\\ c_{11}^{1}=c_{21}^{2}+c_{31}^{3},\ c_{12}^{2}=c_{21}^{2}+c_{31}^{3},\ c_{13}^{3}=c_{21}^{2}+c_{31}^{3}\end{array}\right\}$\newline \mbox{${\rm S}_{11}\in\mathcal{R}$ (take $x_{1}=e_1$, $x_{2}=f_2$, $x_{3}=f_1+f_2$),} \mbox{but $B\cap\mathcal{R}=\varnothing$ for all listed $B$.} \\
\hline
${\rm S}_{12}\not\to\left\{\begin{array}{@{}l@{}}{\rm R}_{19},\ {\rm R}_{24},\ {\rm S}_{15},\  {\rm S}_{16}\end{array}\right\}$ & $\mathcal{R}=\left\{\begin{array}{@{}l@{}}A\mid A=\langle x_1,x_2,x_3\rangle,\ A_{1}A_{2}\subseteq A_{2},\ A_{3}A_{1}\subseteq A_{3},\ c_{21}^{3}=0,\\ c_{11}^{1}=c_{31}^{3},\ c_{12}^{2}=-c_{13}^{3}+c_{31}^{3},\ c_{21}^{2}=c_{31}^{3}\end{array}\right\}$\newline \mbox{${\rm S}_{12}\in\mathcal{R}$ (take $x_{1}=e_1$, $x_{2}=f_2$, $x_{3}=f_1+f_2$),} \mbox{but $B\cap\mathcal{R}=\varnothing$ for all listed $B$.} \\
\hline
${\rm S}_{13}\not\to\left\{\begin{array}{@{}l@{}}{\rm R}_{19},\ {\rm R}_{24},\ {\rm S}_{15},\ {\rm S}_{16}\end{array}\right\}$ & $\mathcal{R}=\left\{\begin{array}{@{}l@{}}A\mid A=\langle x_1,x_2,x_3\rangle,\ A_{2}A_{1}=0,\ c_{11}^{1}=c_{12}^{2}+c_{13}^{3}\end{array}\right\}$\newline \mbox{${\rm S}_{13}\in\mathcal{R}$ (take $x_{1}=e_1$, $x_{2}=f_2$, $x_{3}=f_1+f_2$),} \mbox{but $B\cap\mathcal{R}=\varnothing$ for all listed $B$.} \\
\hline
${\rm S}_{14}\not\to\left\{\begin{array}{@{}l@{}}{\rm R}_{19},\ {\rm R}_{24},\ {\rm S}_{15},\  {\rm S}_{16}\end{array}\right\}$ & $\mathcal{R}=\left\{\begin{array}{@{}l@{}}A\mid A=\langle x_1,x_2,x_3\rangle,\ A_{1}A_{2}=0,\ c_{11}^{1}=c_{21}^{2}+c_{31}^{3}\end{array}\right\}$\newline \mbox{${\rm S}_{14}\in\mathcal{R}$ (take $x_{1}=e_1$, $x_{2}=f_2$, $x_{3}=f_1+f_2$),} \mbox{but $B\cap\mathcal{R}=\varnothing$ for all listed $B$.} \\
\hline
\end{longtable}
\endgroup
\end{proof}

\subsection{The geometric classification of semi-alternative superalgebras of type \texorpdfstring{$(2,1)$}{(2,1)}}\label{sec:deg21}
The semi-alternative superalgebras of type $(2,1)$ are listed in Theorem~A2 (and Proposition~\ref{AC(2,1)}).

\begin{theoremB2}\label{theoremB2}
The variety $\mathfrak{SA}_{(2,1)}$ has dimension $5$ and $13$ irreducible components:
$$
\begin{array}{@{}l@{\quad}l@{\quad}l@{\quad}l@{\quad}l@{}}
\mathcal{C}_{01}=\overline{{\mathcal O}({\bf R}_{01})}; & \mathcal{C}_{04}=\overline{{\mathcal O}({\bf S}_{07})}; & \mathcal{C}_{07}=\overline{{\mathcal O}({\bf S}_{14})}; & \mathcal{C}_{10}=\overline{{\mathcal O}({\bf S}_{17})}; & \mathcal{C}_{13}=\overline{{\mathcal O}({\bf S}_{27})}.\\[4pt]
\mathcal{C}_{02}=\overline{{\mathcal O}({\bf S}_{03})}; & \mathcal{C}_{05}=\overline{{\mathcal O}({\bf S}_{08})}; & \mathcal{C}_{08}=\overline{{\mathcal O}({\bf S}_{15})}; & \mathcal{C}_{11}=\overline{{\mathcal O}({\bf S}_{22})};\\[4pt]
\mathcal{C}_{03}=\overline{{\mathcal O}({\bf S}_{04})}; & \mathcal{C}_{06}=\overline{{\mathcal O}({\bf S}_{13})}; & \mathcal{C}_{09}=\overline{{\mathcal O}({\bf S}_{16})}; & \mathcal{C}_{12}=\overline{{\mathcal O}({\bf S}_{23})};
\end{array}
$$
${\rm Rig}(\mathfrak{SA}_{(2,1)})=\big\{{\bf R}_{01},\allowbreak\ {\bf S}_{03},\allowbreak\ {\bf S}_{04},\allowbreak\ {\bf S}_{07},\allowbreak\ {\bf S}_{08},\allowbreak\ {\bf S}_{13},\allowbreak\ {\bf S}_{14},\allowbreak\ {\bf S}_{15},\allowbreak\ {\bf S}_{16},\allowbreak\ {\bf S}_{17},\allowbreak\ {\bf S}_{22},\allowbreak\ {\bf S}_{23},\allowbreak\ {\bf S}_{27}\big\}.$
\end{theoremB2}

\begin{proof}
By Lemma~\ref{lem_der}, the dimensions of the orbits of the superalgebras and families defining the components are as follows: 
\begin{center}
    ${\rm dim}\,{\mathcal O}({\bf S}_{27})=5$, \\ 
    ${\rm dim}\,{\mathcal O}({\bf R}_{01})=\allowbreak {\rm dim}\,{\mathcal O}({\bf S}_{03})=\allowbreak {\rm dim}\,{\mathcal O}({\bf S}_{04})=\allowbreak {\rm dim}\,{\mathcal O}({\bf S}_{17})=\allowbreak {\rm dim}\,{\mathcal O}({\bf S}_{22})=\allowbreak {\rm dim}\,{\mathcal O}({\bf S}_{23})=4$, \\ ${\rm dim}\,{\mathcal O}({\bf S}_{07})=\allowbreak {\rm dim}\,{\mathcal O}({\bf S}_{08})=\allowbreak {\rm dim}\,{\mathcal O}({\bf S}_{13})=\allowbreak {\rm dim}\,{\mathcal O}({\bf S}_{14})=\allowbreak {\rm dim}\,{\mathcal O}({\bf S}_{15})=\allowbreak {\rm dim}\,{\mathcal O}({\bf S}_{16})=3$.

\end{center}

\noindent{\bf Degenerations.} Every superalgebra of type $(2,1)$ that is not listed in the theorem is obtained from one of the listed superalgebras or families by one of the degenerations of Table~\ref{tab:deg21} or by a chain of them (and the superalgebra with zero multiplication is a degeneration of every superalgebra); hence $\mathfrak{SA}_{(2,1)}=\bigcup_i\mathcal{C}_i$.
\begingroup
\small
\setlength{\tabcolsep}{4pt}
\begin{longtable}{|r@{\;}c@{\;}l|l|l|l|}
\caption{Degenerations showing that every superalgebra of type $(2,1)$ lies in one of the irreducible components of $\mathfrak{SA}_{(2,1)}$.}\label{tab:deg21}\\
\hline
\multicolumn{3}{|c|}{Degeneration} & \multicolumn{3}{c|}{Parametrized basis} \\
\hline
\endfirsthead
\hline
\multicolumn{3}{|c|}{Degeneration} & \multicolumn{3}{c|}{Parametrized basis} \\
\hline
\endhead
${\bf R}_{01}$ & $\to$ & ${\bf R}_{09}$ & ${\rm E}^t_1=e_{2}$ & ${\rm E}^t_2=-te_{1} + te_{2}$ & ${\rm F}^t_1=t^{-2}f_{1}$ \\
\hline
${\bf R}_{01}$ & $\to$ & ${\bf R}_{17}$ & ${\rm E}^t_1=e_{1} + e_{2}$ & ${\rm E}^t_2=te_{2}$ & ${\rm F}^t_1=t^{-2}f_{1}$ \\
\hline
${\bf R}_{02}$ & $\to$ & ${\bf R}_{15}$ & ${\rm E}^t_1=e_{1}$ & ${\rm E}^t_2=te_{1} + te_{2}$ & ${\rm F}^t_1=t^{-2}f_{1}$ \\
\hline
${\bf S}_{01}$ & $\to$ & ${\bf S}_{24}$ & ${\rm E}^t_1=t^{-2}e_{1}$ & ${\rm E}^t_2=te_{1} + 2te_{2}$ & ${\rm F}^t_1=t^{-1}f_{1}$ \\
\hline
${\bf S}_{01}$ & $\to$ & ${\bf S}_{31}$ & ${\rm E}^t_1=t^{-2}e_{1}$ & ${\rm E}^t_2=\frac{1}{2}e_{1} + e_{2}$ & ${\rm F}^t_1=f_{1}$ \\
\hline
${\bf S}_{02}$ & $\to$ & ${\bf S}_{30}$ & ${\rm E}^t_1=t^{-2}e_{1}$ & ${\rm E}^t_2=-\frac{1}{2}e_{1} + e_{2}$ & ${\rm F}^t_1=f_{1}$ \\
\hline
${\bf S}_{03}$ & $\to$ & ${\bf S}_{01}$ & ${\rm E}^t_1=4t^{-2}e_{1}$ & ${\rm E}^t_2=2e_{1} + e_{2}$ & ${\rm F}^t_1=2t^{-1}f_{1}$ \\
\hline
${\bf S}_{03}$ & $\to$ & ${\bf S}_{29}$ & ${\rm E}^t_1=2t^{2}e_{2}$ & ${\rm E}^t_2=4te_{1} - 2te_{2}$ & ${\rm F}^t_1=2tf_{1}$ \\
\hline
${\bf S}_{03}$ & $\to$ & ${\bf S}_{32}$ & ${\rm E}^t_1=e_{1}$ & ${\rm E}^t_2=2e_{1} + e_{2}$ & ${\rm F}^t_1=2tf_{1}$ \\
\hline
${\bf S}_{04}$ & $\to$ & ${\bf S}_{02}$ & ${\rm E}^t_1=4t^{-2}e_{1}$ & ${\rm E}^t_2=-2e_{1} + e_{2}$ & ${\rm F}^t_1=2t^{-1}f_{1}$ \\
\hline
${\bf S}_{04}$ & $\to$ & ${\bf S}_{33}$ & ${\rm E}^t_1=e_{1}$ & ${\rm E}^t_2=-2e_{1} + e_{2}$ & ${\rm F}^t_1=2tf_{1}$ \\
\hline
${\bf S}_{07}$ & $\to$ & ${\bf R}_{37}$ & ${\rm E}^t_1=2te_{2}$ & ${\rm E}^t_2=4t^{2}e_{1} + 2t^{2}e_{2}$ & ${\rm F}^t_1=-t^{-2}f_{1}$ \\
\hline
${\bf S}_{07}$ & $\to$ & ${\bf S}_{05}$ & ${\rm E}^t_1=4t^{-2}e_{1}$ & ${\rm E}^t_2=2e_{1} + e_{2}$ & ${\rm F}^t_1=t^{-2}f_{1}$ \\
\hline
${\bf S}_{08}$ & $\to$ & ${\bf S}_{06}$ & ${\rm E}^t_1=4t^{-2}e_{1}$ & ${\rm E}^t_2=-2e_{1} + e_{2}$ & ${\rm F}^t_1=t^{-2}f_{1}$ \\
\hline
${\bf S}_{13}$ & $\to$ & ${\bf S}_{09}$ & ${\rm E}^t_1=4t^{-2}e_{1}$ & ${\rm E}^t_2=2e_{1} + e_{2}$ & ${\rm F}^t_1=t^{-2}f_{1}$ \\
\hline
${\bf S}_{14}$ & $\to$ & ${\bf S}_{10}$ & ${\rm E}^t_1=4t^{-2}e_{1}$ & ${\rm E}^t_2=2e_{1} + e_{2}$ & ${\rm F}^t_1=t^{-2}f_{1}$ \\
\hline
${\bf S}_{15}$ & $\to$ & ${\bf S}_{11}$ & ${\rm E}^t_1=4t^{-2}e_{1}$ & ${\rm E}^t_2=-2e_{1} + e_{2}$ & ${\rm F}^t_1=t^{-2}f_{1}$ \\
\hline
${\bf S}_{16}$ & $\to$ & ${\bf S}_{12}$ & ${\rm E}^t_1=4t^{-2}e_{1}$ & ${\rm E}^t_2=-2e_{1} + e_{2}$ & ${\rm F}^t_1=t^{-2}f_{1}$ \\
\hline
${\bf S}_{17}$ & $\to$ & ${\bf R}_{21}$ & ${\rm E}^t_1=e_{2}$ & ${\rm E}^t_2=-te_{1} + te_{2}$ & ${\rm F}^t_1=t^{-2}f_{1}$ \\
\hline
${\bf S}_{17}$ & $\to$ & ${\bf S}_{20}$ & ${\rm E}^t_1=\frac{1}{2}e_{1} + \frac{1}{2}e_{2}$ & ${\rm E}^t_2=te_{2}$ & ${\rm F}^t_1=t^{-2}f_{1}$ \\
\hline
${\bf S}_{17}$ & $\to$ & ${\bf S}_{21}$ & ${\rm E}^t_1=-\frac{1}{2}e_{1} + \frac{1}{2}e_{2}$ & ${\rm E}^t_2=te_{2}$ & ${\rm F}^t_1=t^{-2}f_{1}$ \\
\hline
${\bf S}_{22}$ & $\to$ & ${\bf S}_{18}$ & ${\rm E}^t_1=e_{1} + e_{2}$ & ${\rm E}^t_2=te_{2}$ & ${\rm F}^t_1=t^{-2}f_{1}$ \\
\hline
${\bf S}_{23}$ & $\to$ & ${\bf S}_{19}$ & ${\rm E}^t_1=e_{1} + e_{2}$ & ${\rm E}^t_2=te_{2}$ & ${\rm F}^t_1=t^{-2}f_{1}$ \\
\hline
${\bf S}_{27}$ & $\to$ & ${\bf R}_{02}$ & ${\rm E}^t_1=e_{1}$ & ${\rm E}^t_2=e_{2}$ & ${\rm F}^t_1=tf_{1}$ \\
\hline
${\bf S}_{27}$ & $\to$ & ${\bf S}_{25}$ & ${\rm E}^t_1=2t^{2}e_{1} + t^{2}e_{2}$ & ${\rm E}^t_2=e_{1} + e_{2}$ & ${\rm F}^t_1=tf_{1}$ \\
\hline
${\bf S}_{27}$ & $\to$ & ${\bf S}_{26}$ & ${\rm E}^t_1=e_{1}$ & ${\rm E}^t_2=te_{1} - te_{2}$ & ${\rm F}^t_1=-f_{1}$ \\
\hline
${\bf S}_{27}$ & $\to$ & ${\bf S}_{28}$ & ${\rm E}^t_1=t^{2}e_{1} + t^{2}e_{2}$ & ${\rm E}^t_2=e_{2}$ & ${\rm F}^t_1=tf_{1}$ \\
\hline
\end{longtable}
\endgroup
\medskip

\noindent{\bf Non-degenerations.} By Section~\ref{sec:method}, it remains to prove $A\not\to B$ for the superalgebras and families $A$, $B$ defining different components with ${\rm dim}\,A>{\rm dim}\,B$. All these non-degenerations are given in Table~\ref{tab:nondeg21}.
\begingroup
\footnotesize
\renewcommand{\arraystretch}{1.3}\setlength{\tabcolsep}{2pt}
\begin{longtable}{|>{\raggedright\arraybackslash}p{4.7cm}|>{\raggedright\arraybackslash}p{\dimexpr\textwidth-4.7cm-4\tabcolsep-3\arrayrulewidth\relax}|}
\caption{Non-degenerations between the superalgebras and families defining the irreducible components of $\mathfrak{SA}_{(2,1)}$.}\label{tab:nondeg21}\\
\hline
Non-degeneration & Reason\\
\hline
\endfirsthead
\hline
Non-degeneration & Reason\\
\hline
\endhead
${\bf S}_{27}\not\to\left\{\begin{array}{@{}l@{}}
{\bf R}_{01},\ {\bf S}_{03},\ {\bf S}_{04},\ {\bf S}_{07},\\ 
{\bf S}_{08},\ {\bf S}_{13},\ {\bf S}_{14},\ {\bf S}_{15},\\ 
{\bf S}_{16},\ {\bf S}_{17},\ {\bf S}_{22},\ {\bf S}_{23}\end{array}\right\}$ & $\mathcal{R}=\left\{\begin{array}{@{}l@{}}A\mid A=\langle x_1,x_2,x_3\rangle,\ c_{12}^{1}=c_{21}^{1}=c_{22}^{1}=c_{23}^{3}=c_{32}^{3}=0,\\ c_{11}^{1}=c_{31}^{3},\ c_{12}^{2}=c_{21}^{2},\ c_{13}^{3}=c_{31}^{3}\end{array}\right\}$\newline \mbox{${\bf S}_{27}\in\mathcal{R}$ (take $x_{1}=e_1$, $x_{2}=e_2$, $x_{3}=f_1$),} \mbox{but $B\cap\mathcal{R}=\varnothing$ for all listed $B$.} \\
\hline
${\bf R}_{01}\not\to\left\{\begin{array}{@{}l@{}}{\bf S}_{07},\ {\bf S}_{08},\ {\bf S}_{13},\\ {\bf S}_{14},\ {\bf S}_{15},\ {\bf S}_{16}\end{array}\right\}$ & $\mathcal{R}=\left\{\begin{array}{@{}l@{}}A\mid A=\langle x_1,x_2,x_3\rangle,\ A_{1}A_{3}=0,\ A_{3}A_{1}=0,\ c_{12}^{1}=c_{21}^{1},\\ c_{12}^{2}=c_{21}^{2}\end{array}\right\}$\newline \mbox{${\bf R}_{01}\in\mathcal{R}$ (take $x_{1}=e_2$, $x_{2}=e_1+2e_2$, $x_{3}=f_1$),} \mbox{but $B\cap\mathcal{R}=\varnothing$ for all listed $B$.} \\
\hline
${\bf S}_{03}\not\to\left\{\begin{array}{@{}l@{}}{\bf S}_{07},\ {\bf S}_{08},\ {\bf S}_{13},\\ {\bf S}_{14},\ {\bf S}_{15},\ {\bf S}_{16}\end{array}\right\}$ & $\mathcal{R}=\left\{\begin{array}{@{}l@{}}A\mid A=\langle x_1,x_2,x_3\rangle,\ c_{31}^{3}=c_{32}^{3}=0,\ c_{11}^{1}=2c_{13}^{3}-c_{21}^{2},\\ c_{12}^{1}=-c_{22}^{2}+2c_{23}^{3},\ c_{12}^{2}=-c_{13}^{3}+c_{21}^{2},\\ c_{21}^{1}=-c_{22}^{2}+c_{23}^{3}\end{array}\right\}$\newline \mbox{${\bf S}_{03}\in\mathcal{R}$ (take $x_{1}=e_1$, $x_{2}=e_2$, $x_{3}=f_1$),} \mbox{but $B\cap\mathcal{R}=\varnothing$ for all listed $B$.} \\
\hline
${\bf S}_{04}\not\to\left\{\begin{array}{@{}l@{}}{\bf S}_{07},\ {\bf S}_{08},\ {\bf S}_{13},\\ {\bf S}_{14},\ {\bf S}_{15},\ {\bf S}_{16}\end{array}\right\}$ & $\mathcal{R}=\left\{\begin{array}{@{}l@{}}A\mid A=\langle x_1,x_2,x_3\rangle,\ c_{13}^{3}=c_{23}^{3}=0,\ c_{11}^{1}=-c_{21}^{2}+c_{31}^{3},\\ c_{12}^{1}=-c_{22}^{2}+c_{32}^{3},\ c_{12}^{2}=c_{21}^{2}+c_{31}^{3},\ c_{21}^{1}=-c_{22}^{2}+2c_{32}^{3}\end{array}\right\}$\newline \mbox{${\bf S}_{04}\in\mathcal{R}$ (take $x_{1}=e_1$, $x_{2}=e_2$, $x_{3}=f_1$),} \mbox{but $B\cap\mathcal{R}=\varnothing$ for all listed $B$.} \\
\hline
${\bf S}_{17}\not\to\left\{\begin{array}{@{}l@{}}{\bf S}_{07},\ {\bf S}_{08},\ {\bf S}_{13},\\ {\bf S}_{14},\ {\bf S}_{15},\ {\bf S}_{16}\end{array}\right\}$ & $\mathcal{R}=\left\{\begin{array}{@{}l@{}}A\mid A=\langle x_1,x_2,x_3\rangle,\ A_{1}A_{3}\subseteq A_{3},\ c_{11}^{1}=c_{13}^{3}-c_{21}^{2}+c_{31}^{3},\\ c_{12}^{1}=-c_{22}^{2},\ c_{12}^{2}=c_{21}^{2},\ c_{21}^{1}=-c_{22}^{2},\ c_{23}^{3}=-c_{32}^{3}\end{array}\right\}$\newline \mbox{${\bf S}_{17}\in\mathcal{R}$ (take $x_{1}=e_2$, $x_{2}=e_1$, $x_{3}=f_1$),} \mbox{but $B\cap\mathcal{R}=\varnothing$ for all listed $B$.} \\
\hline
${\bf S}_{22}\not\to\left\{\begin{array}{@{}l@{}}{\bf S}_{07},\ {\bf S}_{08},\ {\bf S}_{13},\\ {\bf S}_{14},\ {\bf S}_{15},\ {\bf S}_{16}\end{array}\right\}$ & $\mathcal{R}=\left\{\begin{array}{@{}l@{}}A\mid A=\langle x_1,x_2,x_3\rangle,\ A_{1}A_{2}\subseteq A_{2},\ A_{2}A_{1}\subseteq A_{2},\\ A_{2}A_{3}=0,\ A_{3}A_{1}=0,\ c_{11}^{1}=c_{13}^{3},\ c_{12}^{2}=c_{21}^{2}\end{array}\right\}$\newline \mbox{${\bf S}_{22}\in\mathcal{R}$ (take $x_{1}=e_1$, $x_{2}=e_2$, $x_{3}=f_1$),} \mbox{but $B\cap\mathcal{R}=\varnothing$ for all listed $B$.} \\
\hline
${\bf S}_{23}\not\to\left\{\begin{array}{@{}l@{}}{\bf S}_{07},\ {\bf S}_{08},\ {\bf S}_{13},\\ {\bf S}_{14},\ {\bf S}_{15},\ {\bf S}_{16}\end{array}\right\}$ & $\mathcal{R}=\left\{\begin{array}{@{}l@{}}A\mid A=\langle x_1,x_2,x_3\rangle,\ A_{1}A_{2}\subseteq A_{2},\ A_{1}A_{3}=0,\\ A_{2}A_{1}\subseteq A_{2},\ A_{3}A_{2}=0,\ c_{11}^{1}=c_{31}^{3},\ c_{12}^{2}=c_{21}^{2}\end{array}\right\}$\newline \mbox{${\bf S}_{23}\in\mathcal{R}$ (take $x_{1}=e_1$, $x_{2}=e_2$, $x_{3}=f_1$),} \mbox{but $B\cap\mathcal{R}=\varnothing$ for all listed $B$.} \\
\hline
\end{longtable}
\endgroup
\end{proof}

\subsection{Subvarieties}\label{sec:subvar}
By Section~\ref{sec:method}, a degeneration $A\to B$ (resp. a non-degeneration $A\not\to B$) of semi-alternative superalgebras with $A,B$ from a subvariety is also a degeneration (resp. a non-degeneration) in the subvariety. Below, together with the degenerations and non-degenerations of Tables~\ref{tab:deg12}--\ref{tab:nondeg21}, we use the degenerations of Tables~\ref{tab:subdeg12} and~\ref{tab:subdeg21} and the non-degenerations of Tables~\ref{tab:subnondeg12} and~\ref{tab:subnondeg21}.
\begingroup
\small
\setlength{\tabcolsep}{4pt}
\begin{longtable}{|r@{\;}c@{\;}l|l|l|l|}
\caption{Additional degenerations used for the subvarieties of $\mathfrak{SA}_{(1,2)}$.}\label{tab:subdeg12}\\
\hline
\multicolumn{3}{|c|}{Degeneration} & \multicolumn{3}{c|}{Parametrized basis} \\
\hline
\endfirsthead
\hline
\multicolumn{3}{|c|}{Degeneration} & \multicolumn{3}{c|}{Parametrized basis} \\
\hline
\endhead
${\rm S}_{08}^{t^{-1}}$ & $\to$ & ${\rm R}_{06}^{0}$ & ${\rm E}^t_1=e_{1}$ & ${\rm F}^t_1=f_{2}$ & ${\rm F}^t_2=t^{-1}f_{1}$ \\
\hline
\end{longtable}
\endgroup
\begingroup
\footnotesize
\renewcommand{\arraystretch}{1.3}\setlength{\tabcolsep}{2pt}
\begin{longtable}{|>{\raggedright\arraybackslash}p{4.7cm}|>{\raggedright\arraybackslash}p{\dimexpr\textwidth-4.7cm-4\tabcolsep-3\arrayrulewidth\relax}|}
\caption{Additional non-degenerations used for the subvarieties of $\mathfrak{SA}_{(1,2)}$.}\label{tab:subnondeg12}\\
\hline
Non-degeneration & Reason\\
\hline
\endfirsthead
\hline
Non-degeneration & Reason\\
\hline
\endhead
${\rm R}_{17}\not\to\left\{\begin{array}{@{}l@{}}{\rm R}_{19},\ {\rm R}_{24},\ {\rm S}_{15}\end{array}\right\}$ & $\mathcal{R}=\left\{\begin{array}{@{}l@{}}A\mid A=\langle x_1,x_2,x_3\rangle,\ c_{11}^{1}=c_{21}^{2}+c_{31}^{3}\end{array}\right\}$\newline \mbox{${\rm R}_{17}\in\mathcal{R}$ (take $x_{1}=e_1$, $x_{2}=f_2$, $x_{3}=f_1+f_2$),} \mbox{but $B\cap\mathcal{R}=\varnothing$ for all listed $B$.} \\
\hline
$\{{\rm S}_{01}^{\alpha}\}_{\alpha\in\mathbb{C}}\not\to\left\{\begin{array}{@{}l@{}}{\rm R}_{17},\ {\rm S}_{08}^{\beta}\ (\beta\in\mathbb{C})\end{array}\right\}$ & $\mathcal{R}=\left\{\begin{array}{@{}l@{}}A\mid A=\langle x_1,x_2,x_3\rangle,\ c_{11}^{1}=c_{12}^{3}=c_{13}^{2}=c_{21}^{3}=c_{31}^{2}=0,\\ c_{12}^{2}=c_{13}^{3},\ c_{21}^{2}=c_{31}^{3}\end{array}\right\}$\newline \mbox{${\rm S}_{01}^{\alpha}\in\mathcal{R}$ for all $\alpha$ (take $x_{1}=e_1$, $x_{2}=f_1$, $x_{3}=f_2$),} \mbox{but $B\cap\mathcal{R}=\varnothing$ for all listed $B$.} \\
\hline
${\rm S}_{03}\not\to\left\{\begin{array}{@{}l@{}}{\rm S}_{08}^{\beta}\ (\beta\in\mathbb{C})\end{array}\right\}$ & $\mathcal{R}=\left\{\begin{array}{@{}l@{}}A\mid A=\langle x_1,x_2,x_3\rangle,\ A_{1}A_{3}=0,\ A_{3}A_{1}=0,\ c_{11}^{1}=c_{21}^{2},\\ c_{12}^{2}=c_{21}^{2},\ c_{12}^{3}=c_{21}^{3}\end{array}\right\}$\newline \mbox{${\rm S}_{03}\in\mathcal{R}$ (take $x_{1}=e_1$, $x_{2}=f_2$, $x_{3}=f_1$),} \mbox{but ${\rm S}_{08}^{\beta}\cap\mathcal{R}=\varnothing$.} \\
\hline
$\{{\rm S}_{08}^{\alpha}\}_{\alpha\in\mathbb{C}}\not\to\left\{\begin{array}{@{}l@{}}{\rm R}_{19},\ {\rm R}_{24},\\ {\rm S}_{15},\   {\rm S}_{16}\end{array}\right\}$ & $\mathcal{R}=\left\{\begin{array}{@{}l@{}}A\mid A=\langle x_1,x_2,x_3\rangle,\ c_{11}^{1}=0\end{array}\right\}$\newline \mbox{${\rm S}_{08}^{\alpha}\in\mathcal{R}$ for all $\alpha$ (take $x_{1}=e_1$, $x_{2}=f_1$, $x_{3}=f_2$),} \mbox{but $B\cap\mathcal{R}=\varnothing$ for all listed $B$.} \\
\hline
\end{longtable}
\endgroup
\begingroup
\small
\setlength{\tabcolsep}{4pt}
\begin{longtable}{|r@{\;}c@{\;}l|l|l|l|}
\caption{Additional degenerations used for the subvarieties of $\mathfrak{SA}_{(2,1)}$.}\label{tab:subdeg21}\\
\hline
\multicolumn{3}{|c|}{Degeneration} & \multicolumn{3}{c|}{Parametrized basis} \\
\hline
\endfirsthead
\hline
\multicolumn{3}{|c|}{Degeneration} & \multicolumn{3}{c|}{Parametrized basis} \\
\hline
\endhead
${\bf R}_{02}$ & $\to$ & ${\bf R}_{21}$ & ${\rm E}^t_1=e_{1} + e_{2}$ & ${\rm E}^t_2=te_{2}$ & ${\rm F}^t_1=t^{-2}f_{1}$ \\
\hline
${\bf R}_{09}$ & $\to$ & ${\bf R}_{37}$ & ${\rm E}^t_1=te_{1} + te_{2}$ & ${\rm E}^t_2=2t^{2}e_{1} + t^{2}e_{2}$ & ${\rm F}^t_1=-t^{-2}f_{1}$ \\
\hline
${\bf S}_{22}$ & $\to$ & ${\bf S}_{20}$ & ${\rm E}^t_1=e_{1}$ & ${\rm E}^t_2=2te_{1} + te_{2}$ & ${\rm F}^t_1=t^{-2}f_{1}$ \\
\hline
${\bf S}_{25}$ & $\to$ & ${\bf S}_{29}$ & ${\rm E}^t_1=2t^{2}e_{1} + t^{2}e_{2}$ & ${\rm E}^t_2=te_{1} + te_{2}$ & ${\rm F}^t_1=tf_{1}$ \\
\hline
${\bf S}_{28}$ & $\to$ & ${\bf S}_{29}$ & ${\rm E}^t_1=t^{2}e_{2}$ & ${\rm E}^t_2=-te_{1} + te_{2}$ & ${\rm F}^t_1=tf_{1}$ \\
\hline
${\bf S}_{29}$ & $\to$ & ${\bf S}_{24}$ & ${\rm E}^t_1=t^{-2}e_{1}$ & ${\rm E}^t_2=e_{1} + e_{2}$ & ${\rm F}^t_1=t^{-1}f_{1}$ \\
\hline
${\bf S}_{32}$ & $\to$ & ${\bf S}_{31}$ & ${\rm E}^t_1=4t^{-2}e_{1}$ & ${\rm E}^t_2=2e_{1} + e_{2}$ & ${\rm F}^t_1=t^{-2}f_{1}$ \\
\hline
${\bf S}_{33}$ & $\to$ & ${\bf S}_{30}$ & ${\rm E}^t_1=4t^{-2}e_{1}$ & ${\rm E}^t_2=-2e_{1} + e_{2}$ & ${\rm F}^t_1=t^{-2}f_{1}$ \\
\hline
\end{longtable}
\endgroup
\begingroup
\footnotesize
\renewcommand{\arraystretch}{1.3}\setlength{\tabcolsep}{2pt}
\begin{longtable}{|>{\raggedright\arraybackslash}p{4.7cm}|>{\raggedright\arraybackslash}p{\dimexpr\textwidth-4.7cm-4\tabcolsep-3\arrayrulewidth\relax}|}
\caption{Additional non-degenerations used for the subvarieties of $\mathfrak{SA}_{(2,1)}$.}\label{tab:subnondeg21}\\
\hline
Non-degeneration & Reason\\
\hline
\endfirsthead
\hline
Non-degeneration & Reason\\
\hline
\endhead
${\bf R}_{01}\not\to\left\{\begin{array}{@{}l@{}}
{\bf S}_{05},\ {\bf S}_{06},\ {\bf S}_{09},\ {\bf S}_{10},\\ 
{\bf S}_{11},\ {\bf S}_{12},\ {\bf S}_{30},\ {\bf S}_{31},\\ 
{\bf S}_{32},\ {\bf S}_{33}\end{array}\right\}$ & 
$\mathcal{R}=\left\{\begin{array}{@{}l@{}}A\mid A=\langle x_1,x_2,x_3\rangle,\ A_{1}A_{3}=0,\ A_{3}A_{1}=0,\ c_{12}^{1}=c_{21}^{1},\ c_{12}^{2}=c_{21}^{2}\end{array}\right\}$\newline \mbox{${\bf R}_{01}\in\mathcal{R}$ (take $x_{1}=e_2$, $x_{2}=e_1+2e_2$, $x_{3}=f_1$),} \mbox{but $B\cap\mathcal{R}=\varnothing$ for all listed $B$.} \\
\hline
${\bf R}_{02}\not\to\left\{\begin{array}{@{}l@{}}{\bf S}_{06},\ {\bf S}_{11},\ {\bf S}_{12}\end{array}\right\}$ & $\mathcal{R}=\left\{\begin{array}{@{}l@{}}A\mid A=\langle x_1,x_2,x_3\rangle,\ c_{12}^{1}=c_{21}^{1},\ c_{12}^{2}=c_{21}^{2}\end{array}\right\}$\newline \mbox{${\bf R}_{02}\in\mathcal{R}$ (take $x_{1}=e_1$, $x_{2}=e_2$, $x_{3}=f_1$),} \mbox{but $B\cap\mathcal{R}=\varnothing$ for all listed $B$.} \\
\hline
${\bf S}_{17}\not\to\left\{\begin{array}{@{}l@{}}
{\bf S}_{05},\ {\bf S}_{06},\ {\bf S}_{09},\  {\bf S}_{10},\\ 
{\bf S}_{11},\ {\bf S}_{12},\ {\bf S}_{30},\ {\bf S}_{31},\\ 
{\bf S}_{32},\ {\bf S}_{33}\end{array}\right\}$ & $\mathcal{R}=\left\{\begin{array}{@{}l@{}}A\mid A=\langle x_1,x_2,x_3\rangle,\ A_{1}A_{3}\subseteq A_{3},\ c_{11}^{1}=c_{13}^{3}-c_{21}^{2}+c_{31}^{3},\\ c_{12}^{1}=-c_{22}^{2},\ c_{12}^{2}=c_{21}^{2},\ c_{21}^{1}=-c_{22}^{2},\ c_{23}^{3}=-c_{32}^{3}\end{array}\right\}$\newline \mbox{${\bf S}_{17}\in\mathcal{R}$ (take $x_{1}=e_2$, $x_{2}=e_1$, $x_{3}=f_1$),} \mbox{but $B\cap\mathcal{R}=\varnothing$ for all listed $B$.} \\
\hline

${\bf S}_{22}\not\to\left\{\begin{array}{@{}l@{}}
{\bf S}_{05},\ {\bf S}_{06},\ {\bf S}_{09},\  {\bf S}_{10},\\  
{\bf S}_{11},\ {\bf S}_{12},\ {\bf S}_{30},\ {\bf S}_{31},\\ 
{\bf S}_{32},\  {\bf S}_{33}\end{array}\right\}$ & $\mathcal{R}=\left\{\begin{array}{@{}l@{}}A\mid A=\langle x_1,x_2,x_3\rangle,\ A_{1}A_{2}\subseteq A_{2},\ A_{2}A_{1}\subseteq A_{2},\\ A_{2}A_{3}=0,\ A_{3}A_{1}=0,\ c_{11}^{1}=c_{13}^{3},\ c_{12}^{2}=c_{21}^{2}\end{array}\right\}$\newline \mbox{${\bf S}_{22}\in\mathcal{R}$ (take $x_{1}=e_1$, $x_{2}=e_2$, $x_{3}=f_1$),} \mbox{but $B\cap\mathcal{R}=\varnothing$ for all listed $B$.} \\

\hline
${\bf S}_{23}\not\to\left\{\begin{array}{@{}l@{}}
{\bf S}_{05},\ {\bf S}_{06},\ {\bf S}_{09},\  {\bf S}_{10},\\ 
{\bf S}_{11},\ {\bf S}_{12},\  {\bf S}_{30},\ {\bf S}_{31},\\ 
{\bf S}_{32},\ {\bf S}_{33}\end{array}\right\}$ & $\mathcal{R}=\left\{\begin{array}{@{}l@{}}A\mid A=\langle x_1,x_2,x_3\rangle,\ A_{1}A_{2}\subseteq A_{2},\ A_{1}A_{3}=0,\\ A_{2}A_{1}\subseteq A_{2},\ A_{3}A_{2}=0,\ c_{11}^{1}=c_{31}^{3},\ c_{12}^{2}=c_{21}^{2}\end{array}\right\}$\newline \mbox{${\bf S}_{23}\in\mathcal{R}$ (take $x_{1}=e_1$, $x_{2}=e_2$, $x_{3}=f_1$),} \mbox{but $B\cap\mathcal{R}=\varnothing$ for all listed $B$.} \\
\hline
${\bf S}_{27}\not\to\left\{\begin{array}{@{}l@{}}
{\bf S}_{05},\ {\bf S}_{06},\ {\bf S}_{09},\  {\bf S}_{10},\\ 
{\bf S}_{11},\ {\bf S}_{12},\  {\bf S}_{30},\ {\bf S}_{31},\\ 
{\bf S}_{32},\  {\bf S}_{33}\end{array}\right\}$ & $\mathcal{R}=\left\{\begin{array}{@{}l@{}}A\mid A=\langle x_1,x_2,x_3\rangle,\ c_{12}^{1}=c_{21}^{1}=c_{22}^{1}=c_{23}^{3}=c_{32}^{3}=0,\\ c_{11}^{1}=c_{31}^{3},\ c_{12}^{2}=c_{21}^{2},\ c_{13}^{3}=c_{31}^{3}\end{array}\right\}$\newline \mbox{${\bf S}_{27}\in\mathcal{R}$ (take $x_{1}=e_1$, $x_{2}=e_2$, $x_{3}=f_1$),} \mbox{but $B\cap\mathcal{R}=\varnothing$ for all listed $B$.} \\
\hline
${\bf S}_{28}\not\to\left\{\begin{array}{@{}l@{}}{\bf S}_{06},\ {\bf S}_{11},\ {\bf S}_{12}\end{array}\right\}$ & $\mathcal{R}=\left\{\begin{array}{@{}l@{}}A\mid A=\langle x_1,x_2,x_3\rangle,\ c_{12}^{1}=c_{21}^{1},\ c_{12}^{2}=c_{21}^{2}\end{array}\right\}$\newline \mbox{${\bf S}_{28}\in\mathcal{R}$ (take $x_{1}=e_1$, $x_{2}=e_2$, $x_{3}=f_1$),} \mbox{but $B\cap\mathcal{R}=\varnothing$ for all listed $B$.} \\
\hline
\end{longtable}
\endgroup

\subsubsection{Associative supercommutative superalgebras}
The associative supercommutative superalgebras of types $(1,2)$ and $(2,1)$ are listed in Propositions~\ref{AC(1,2)} and~\ref{AC(2,1)}. The following result was obtained in \cite{AKK}.
\begin{corollary}[see \cite{AKK}]\label{th:ac12} The variety $\mathfrak{ACom}_{(1,2)}$ has dimension $3$ and $4$ irreducible components:
$$
\begin{array}{@{}l@{\quad}l@{\quad}l@{\quad}l@{}}
\mathcal{C}_{01}=\overline{{\mathcal O}({\rm R}_{04}^{0})}; & \mathcal{C}_{02}=\overline{{\mathcal O}({\rm R}_{17})}; & \mathcal{C}_{03}=\overline{{\mathcal O}({\rm R}_{19})}; & \mathcal{C}_{04}=\overline{{\mathcal O}({\rm R}_{24})}.
\end{array}
$$
${\rm Rig}(\mathfrak{ACom}_{(1,2)})=\big\{{\rm R}_{04}^{0},\allowbreak\ {\rm R}_{17},\allowbreak\ {\rm R}_{19},\allowbreak\ {\rm R}_{24}\big\}.$
\end{corollary}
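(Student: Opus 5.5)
The statement concerns $\mathfrak{ACom}_{(1,2)}$, whose points are the five nontrivial algebras of Proposition~\ref{AC(1,2)} together with $\mathbb{C}^{3}$. I will follow the scheme of Section~\ref{sec:method}: show that every algebra lies in one of the four claimed closures, and then show that none of these closures contains another. By the remarks on subvarieties, every degeneration or non-degeneration already established in $\mathfrak{SA}_{(1,2)}$ between associative supercommutative superalgebras remains valid in $\mathfrak{ACom}_{(1,2)}$, so most of the required facts can be taken from Tables~\ref{tab:deg12}, \ref{tab:nondeg12} and~\ref{tab:subnondeg12}.

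\textbf{Step 1: orbit dimensions.} I compute $\dim\mathfrak{Der}_{\bar0}$ for ${\rm R}_{04}^{0}$, ${\rm R}_{06}^{0}$, ${\rm R}_{17}$, ${\rm R}_{19}$ and ${\rm R}_{24}$ and apply Lemma~\ref{lem_der}; here $n^2+m^2=5$. From Theorem~B1 we already have $\dim{\mathcal O}({\rm R}_{19})=\dim{\mathcal O}({\rm R}_{24})=1$.
\begin{itemize}
\item For ${\rm R}_{04}^{0}$ the even derivations are $d=\operatorname{diag}(a,b,c)$ on $\langle f_1,f_2\rangle$ together with the $\mathfrak{gl}_2$ part, subject to $a=\det$-trace. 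This gives $\dim\mathfrak{Der}_{\bar0}=4$, so the orbit has dimension $1$.
\item That value is suspicious, since the statement says $\dim\mathfrak{ACom}_{(1,2)}=3$. I will therefore check via $\mathrm{Aut}$: $f_1f_2=e_1$ gives $\mathrm{Aut}=\{(\det g,g)\}$, of dimension $4$, so the orbit does have dimension $1$.
\item For ${\rm R}_{17}$ ($e_1$ idempotent acting on $f_1$) the even derivations are a derivation of $f_1\mapsto$ part with $d(f_2)\in\langle f_1,f_2\rangle$ constrained. I expect $\dim\mathfrak{Der}_{\bar0}=2$, giving an orbit of dimension $3$.
\end{itemize}
So ${\rm R}_{17}$ should realise the dimension $3$.

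\textbf{Step 2: degenerations.} Only ${\rm R}_{06}^{0}$ and $\mathbb{C}^3$ are not among the claimed rigid algebras. Table~\ref{tab:deg12} already gives ${\rm R}_{17}\to{\rm R}_{06}^{0}$, and every algebra degenerates to $\mathbb{C}^3$. Hence $\mathfrak{ACom}_{(1,2)}$ is the union of the four claimed closures.

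\textbf{Step 3: non-degenerations.} By the dimension argument of Section~\ref{sec:method}, it suffices to rule out degenerations from higher to lower orbit dimension.
\begin{itemize}
\item From ${\rm R}_{17}$: Table~\ref{tab:subnondeg12} already gives ${\rm R}_{17}\not\to{\rm R}_{19},{\rm R}_{24}$. It remains to show ${\rm R}_{17}\not\to{\rm R}_{04}^{0}$ if $\dim{\mathcal O}({\rm R}_{04}^{0})<3$. An invariant suffices here: ${\rm R}_{17}$ satisfies $\mathrm{A}_{\bar1}\mathrm{A}_{\bar1}=0$, which is a closed $G$-stable condition, while ${\rm R}_{04}^{0}$ does not.
\item Among the algebras of equal dimension $1$, namely ${\rm R}_{04}^{0}$, ${\rm R}_{19}$ and ${\rm R}_{24}$, no proper degeneration can occur.
\end{itemize}

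\textbf{Expected obstacle.} The main difficulty is the dimension claim. The orbit dimensions $3,1,1,1$ are consistent with the variety having dimension $3$, but I need to double-check $\mathfrak{Der}_{\bar0}({\rm R}_{17})$ carefully. If an orbit dimension comes out different, I will redo the non-degeneration step with the corresponding invariant sets $\mathcal{R}$, for instance $c_{11}^{1}=c_{21}^{2}+c_{31}^{3}$ from Table~\ref{tab:subnondeg12}.
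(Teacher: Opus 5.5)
Your proposal is correct. The route differs from the paper's only because the paper gives no argument for this corollary: it quotes the result from \cite{AKK}. You instead derive it inside the framework of Section~\ref{sec:method}, as follows.
\begin{itemize}
\item The only nonzero non-listed algebra, ${\rm R}_{06}^{0}$, is covered by ${\rm R}_{17}\to{\rm R}_{06}^{0}$ from Table~\ref{tab:deg12}.
\item The non-degenerations ${\rm R}_{17}\not\to{\rm R}_{19},{\rm R}_{24}$ are taken from Table~\ref{tab:subnondeg12}.
\item ${\rm R}_{17}\not\to{\rm R}_{04}^{0}$ follows from the closed $G$-stable condition $A_{\bar1}A_{\bar1}=0$.
\item The three $1$-dimensional orbits cannot degenerate to one another by Lemma~\ref{lem_der}.
\end{itemize}
This makes the statement verifiable with the tools and tables of the present paper alone, and checks it against them. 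The paper's citation buys brevity and defers to the original source.

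Two computations are left loose and should be written out properly.

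\begin{itemize}
\item Your description of $\mathfrak{Der}_{\bar0}({\rm R}_{04}^{0})$ is garbled. An even derivation is $d(e_1)=ae_1$ with $d|_{\langle f_1,f_2\rangle}=X\in\mathfrak{gl}_2$. Applying $d$ to $f_1f_2=e_1$ gives exactly $a=\mathrm{tr}\,X$, so $\dim\mathfrak{Der}_{\bar0}=4$ and the orbit has dimension $1$. There is nothing suspicious about this value: the dimension of the variety is the maximum over its components, not the dimension of every component.
\item For ${\rm R}_{17}$, replace ``I expect'' by the actual computation.
  \begin{itemize}
  \item From $e_1e_1=e_1$ you get $d(e_1)=0$.
  \item Then $e_1f_1=f_1$ gives $d(f_1)=e_1d(f_1)$, so $d(f_1)\in\langle f_1\rangle$.
  \item And $e_1f_2=0$ gives $e_1d(f_2)=0$, so $d(f_2)\in\langle f_2\rangle$.
  \end{itemize}
  Hence $\dim\mathfrak{Der}_{\bar0}({\rm R}_{17})=2$ and $\dim{\mathcal O}({\rm R}_{17})=3$. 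This is precisely the fact that gives $\dim\mathfrak{ACom}_{(1,2)}=3$, so it should not be left as an expectation.
\end{itemize}
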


\begin{corollary}[see \cite{AKK}]\label{th:ac21} The variety $\mathfrak{ACom}_{(2,1)}$ has dimension $4$ and $2$ irreducible components:
$$
\begin{array}{@{}l@{\quad}l@{}}
\mathcal{C}_{01}=\overline{{\mathcal O}({\bf R}_{01})}; & \mathcal{C}_{02}=\overline{{\mathcal O}({\bf R}_{02})}.
\end{array}
$$
${\rm Rig}(\mathfrak{ACom}_{(2,1)})=\big\{{\bf R}_{01},\allowbreak\ {\bf R}_{02}\big\}.$
\end{corollary}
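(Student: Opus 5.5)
The statement to prove is Corollary~\ref{th:ac21}: $\mathfrak{ACom}_{(2,1)}$ has dimension $4$, and its irreducible components are $\overline{{\mathcal O}({\bf R}_{01})}$ and $\overline{{\mathcal O}({\bf R}_{02})}$. I will follow the scheme of Section~\ref{sec:method} and work inside $\mathfrak{SA}_{(2,1)}$, since $\mathfrak{ACom}_{(2,1)}$ is a closed $G$-stable subset of it.

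\textbf{Superalgebras involved.} By Proposition~\ref{AC(2,1)}, the nontrivial superalgebras of $\mathfrak{ACom}_{(2,1)}$ are ${\bf R}_{01}$, ${\bf R}_{02}$, ${\bf R}_{09}$, ${\bf R}_{15}$, ${\bf R}_{17}$, ${\bf R}_{21}$ and ${\bf R}_{37}$; the zero superalgebra $\mathbb{C}^{3}$ is a degeneration of all of them. There are no families, so the variety is a finite union of orbits.

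\textbf{Covering.} I show that every orbit lies in the closure of ${\mathcal O}({\bf R}_{01})$ or of ${\mathcal O}({\bf R}_{02})$, using degenerations already listed.
\begin{itemize}
\item Table~\ref{tab:deg21} gives ${\bf R}_{01}\to{\bf R}_{09}$, ${\bf R}_{01}\to{\bf R}_{17}$ and ${\bf R}_{02}\to{\bf R}_{15}$.
\item Table~\ref{tab:subdeg21} gives ${\bf R}_{02}\to{\bf R}_{21}$ and ${\bf R}_{09}\to{\bf R}_{37}$, so ${\bf R}_{01}\to{\bf R}_{37}$ by transitivity.
\end{itemize}
Hence $\mathfrak{ACom}_{(2,1)}=\overline{{\mathcal O}({\bf R}_{01})}\cup\overline{{\mathcal O}({\bf R}_{02})}$.

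\textbf{Dimensions.} I compute the even derivations of the two superalgebras and apply Lemma~\ref{lem_der}.
\begin{itemize}
\item ${\bf R}_{01}$ is the direct sum of two copies of $\mathbb{C}$ on the even part, with a zero odd part. An even derivation must kill $e_1$ and $e_2$, since a derivation of $\mathbb{C}\oplus\mathbb{C}$ vanishes. It may act by an arbitrary scalar on $f_1$. So $\dim\mathfrak{Der}_{\bar0}=1$ and $\dim{\mathcal O}({\bf R}_{01})=5-1=4$, which matches Theorem~B2.
\item For ${\bf R}_{02}$, the products $e_1f_1=f_1e_1=f_1$ force the derivation to kill $e_1$ and $e_2$ and to scale $f_1$ freely. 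Again $\dim\mathfrak{Der}_{\bar0}=1$, so $\dim{\mathcal O}({\bf R}_{02})=4$.
\end{itemize}
The variety therefore has dimension $4$.

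\textbf{Non-degenerations.} Both orbits have the same dimension $4$ and the two superalgebras are non-isomorphic. By the dimension argument in Section~\ref{sec:method}, neither closure contains the other: equal-dimensional irreducible closures with one inside the other would coincide, and then the two dense orbits would meet. So no non-degeneration tables are needed.

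Consequently the two closures are exactly the irreducible components, and ${\bf R}_{01}$ and ${\bf R}_{02}$ are the rigid superalgebras. The only steps requiring any care are the derivation computation and making sure every listed superalgebra is covered; both are routine.
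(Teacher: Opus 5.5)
Your argument is correct and follows the paper's approach. The paper itself only cites \cite{AKK} for this corollary, but your five degenerations ${\bf R}_{01}\to{\bf R}_{09}$, ${\bf R}_{01}\to{\bf R}_{17}$, ${\bf R}_{02}\to{\bf R}_{15}$, ${\bf R}_{02}\to{\bf R}_{21}$, ${\bf R}_{09}\to{\bf R}_{37}$ are exactly those of Tables~\ref{tab:deg21} and~\ref{tab:subdeg21}, used in the same way in the proof of Corollary~\ref{th:perm21}. Together with the equal-dimension argument of Section~\ref{sec:method} for the two $4$-dimensional orbits, this is precisely the reasoning the paper's machinery supplies (one minor wording point: for ${\bf R}_{02}$ it is the even products $e_1e_1=e_1$, $e_2e_2=e_2$ that force $D(e_1)=D(e_2)=0$, while $e_1f_1=f_1e_1=f_1$ put no condition on the scalar by which $D$ acts on $f_1$).
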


\subsubsection{\texorpdfstring{$\mathfrak{perm}$}{perm} superalgebras}
The $\mathfrak{perm}$ superalgebras of types $(1,2)$ and $(2,1)$ are listed in Corollaries~\ref{cor:perm12} and~\ref{cor:perm21}.
\begin{corollary}\label{th:perm12} The variety $\mathfrak{Perm}_{(1,2)}$ has dimension $4$ and $8$ irreducible components:
$$
\begin{array}{@{}l@{\quad}l@{\quad}l@{\quad}l@{}}
\mathcal{C}_{01}=\overline{{\mathcal O}({\rm R}_{17})}; & \mathcal{C}_{03}=\overline{{\mathcal O}({\rm R}_{24})}; & \mathcal{C}_{05}=\overline{\bigcup_{\alpha}{\mathcal O}({\rm S}_{08}^{\alpha})}; & \mathcal{C}_{07}=\overline{{\mathcal O}({\rm S}_{13})};\\[4pt]
\mathcal{C}_{02}=\overline{{\mathcal O}({\rm R}_{19})}; & \mathcal{C}_{04}=\overline{\bigcup_{\alpha}{\mathcal O}({\rm S}_{01}^{\alpha})}; & \mathcal{C}_{06}=\overline{{\mathcal O}({\rm S}_{11})}; & \mathcal{C}_{08}=\overline{{\mathcal O}({\rm S}_{15})}.
\end{array}
$$
${\rm Rig}(\mathfrak{Perm}_{(1,2)})=\big\{{\rm R}_{17},\allowbreak\ {\rm R}_{19},\allowbreak\ {\rm R}_{24},\allowbreak\ {\rm S}_{11},\allowbreak\ {\rm S}_{13},\allowbreak\ {\rm S}_{15}\big\}.$
\end{corollary}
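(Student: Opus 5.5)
The plan follows the scheme of Section~\ref{sec:method}, applied to the closed $G$-stable subset $\mathfrak{Perm}_{(1,2)}\subseteq\mathfrak{SA}_{(1,2)}$. By Corollary~\ref{cor:perm12}, its points are the orbits of ${\rm R}_{04}^{0},{\rm R}_{06}^{0},{\rm R}_{17},{\rm R}_{19},{\rm R}_{24}$, of the superalgebras ${\rm S}_{02},{\rm S}_{05},{\rm S}_{11},{\rm S}_{13},{\rm S}_{15}$, and of the two families $\{{\rm S}_{01}^{\alpha}\}$ and $\{{\rm S}_{08}^{\alpha}\}$, together with $\mathbb{C}^{3}$.

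\textbf{Covering.} I would first show that every orbit lies in one of the eight listed closures, using degenerations already recorded. From Table~\ref{tab:deg12}: ${\rm S}_{01}^{\alpha}\to{\rm S}_{02}$, ${\rm S}_{01}^{t^{-1}}\to{\rm S}_{05}$ (so the family degenerates to ${\rm S}_{05}$), and ${\rm S}_{05}\to{\rm R}_{04}^{0}$. Then ${\rm R}_{06}^{0}$ is reached from ${\rm R}_{17}$ by Table~\ref{tab:deg12} (and also from the family ${\rm S}_{08}$ by Table~\ref{tab:subdeg12}). Since all these superalgebras are $\mathfrak{perm}$, the degenerations remain valid inside the subvariety. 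Hence $\mathfrak{Perm}_{(1,2)}$ is the union of the eight closures.

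\textbf{Dimensions.} Next I would compute orbit dimensions via Lemma~\ref{lem_der}, using $\dim G=5$. The proof of Theorem~B1 already gives $\dim=4$ for the closure of the ${\rm S}_{01}^{\alpha}$ family, $3$ for ${\rm S}_{11}$ and ${\rm S}_{13}$, and $1$ for ${\rm R}_{19},{\rm R}_{24},{\rm S}_{15}$. I would compute $\mathfrak{Der}_{\bar0}$ directly for ${\rm R}_{17}$ and for ${\rm S}_{08}^{\alpha}$. I expect ${\rm S}_{08}^{\alpha}$ to have $2$-dimensional even derivations, since its automorphism group is $3$-dimensional minus the scalings fixing $\theta$; this gives orbit dimension $3$ and family dimension $4$. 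For ${\rm R}_{17}$ I expect orbit dimension $3$. These values yield $\dim\mathfrak{Perm}_{(1,2)}=4$.

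\textbf{Non-degenerations.} Finally I would prove that no closure contains another, which only needs checking for pairs with strictly larger dimension. The two $4$-dimensional families must not degenerate to each other; by equal dimension this holds automatically, as noted in Section~\ref{sec:method}. For the family ${\rm S}_{01}^{\alpha}$, Table~\ref{tab:nondeg12} rules out ${\rm R}_{19},{\rm R}_{24},{\rm S}_{11},{\rm S}_{13},{\rm S}_{15}$, and Table~\ref{tab:subnondeg12} rules out ${\rm R}_{17}$. For the family ${\rm S}_{08}$, Table~\ref{tab:subnondeg12} gives $\not\to{\rm R}_{19},{\rm R}_{24},{\rm S}_{15}$. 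I would still need the family ${\rm S}_{08}\not\to{\rm R}_{17},{\rm S}_{11},{\rm S}_{13}$; these are excluded because they contain a nonzero idempotent $e_1e_1=e_1$. I would use the $\mathcal{B}$-stable set $\{c_{11}^1=0\}$ from Table~\ref{tab:subnondeg12}, after checking that ${\rm R}_{17}$, ${\rm S}_{11}$ and ${\rm S}_{13}$ admit no basis with $c_{11}^1=0$. This holds because the only even vector squares to a nonzero multiple of itself. For the $3$-dimensional orbits ${\rm R}_{17},{\rm S}_{11},{\rm S}_{13}$ against the $1$-dimensional ones, I would cite Table~\ref{tab:nondeg12} (rows ${\rm S}_{11},{\rm S}_{13}$) and Table~\ref{tab:subnondeg12} (row ${\rm R}_{17}$).

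The main obstacle is confirming the ${\rm S}_{08}$-family non-degenerations to ${\rm R}_{17},{\rm S}_{11},{\rm S}_{13}$ via the idempotent invariant. The argument must hold in the family setting, which Lemma~\ref{main} handles.
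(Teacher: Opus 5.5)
Your route is the paper's: the same covering degenerations and the same tables of non-degenerations. But there is a genuine gap, caused by a wrong dimension count for the family $\{{\rm S}_{08}^{\alpha}\}$.

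In the proof of Theorem~A1 one has $\theta\ast\phi=\theta$ for \emph{every} $\phi\in{\rm Aut}({\rm A}_{06})$. So the stabilizer is the whole $3$-dimensional group ${\rm Aut}({\rm A}_{06})$, not a $2$-dimensional subgroup. Directly, the even derivations of ${\rm S}_{08}^{\alpha}$ are $D(e_1)=ae_1$, $D(f_1)=(a+e)f_1$, $D(f_2)=df_1+ef_2$ with $a,d,e$ free. By Lemma~\ref{lem_der}, ${\rm dim}\,{\mathcal O}({\rm S}_{08}^{\alpha})=5-3=2$. Since distinct $\alpha$ give non-isomorphic superalgebras, ${\rm dim}\,\mathcal{C}_{05}=3$, not $4$.

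Hence your step ``the two $4$-dimensional families cannot degenerate to each other by equal dimension'' does not apply. $\mathcal{C}_{05}$ has strictly smaller dimension than $\mathcal{C}_{04}$, and nothing in your plan excludes $\mathcal{C}_{05}\subseteq\mathcal{C}_{04}$, i.e.\ $\{{\rm S}_{01}^{\alpha}\}_{\alpha\in\mathbb{C}}\to{\rm S}_{08}^{\beta}$. Without this you have not shown that $\mathcal{C}_{05}$ is an irreducible component, so the count of $8$ components is not established.

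The missing step is the non-degeneration $\{{\rm S}_{01}^{\alpha}\}\not\to{\rm S}_{08}^{\beta}$ in the second row of Table~\ref{tab:subnondeg12}, which the paper uses. The set $\mathcal{R}$ there requires $x_1x_1=0$ and that $x_1$ act on the odd part by scalars from both sides. Every ${\rm S}_{01}^{\alpha}$ satisfies this, since $e_1$ acts by $0$. In ${\rm S}_{08}^{\beta}$, left and right multiplication by $e_1$ on $\langle f_1,f_2\rangle$ are $(\beta+1)N$ and $(\beta-1)N$ for a fixed nonzero nilpotent $N$. Both would have to vanish, which is impossible.

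Conversely, what you call the main obstacle is unnecessary. $\mathcal{C}_{05}$ and the orbit closures of ${\rm R}_{17}$, ${\rm S}_{11}$, ${\rm S}_{13}$ all have dimension $3$, so no inclusion among them is possible by the equal-dimension argument of Section~\ref{sec:method}. Your argument via $c_{11}^{1}=0$ is correct but redundant. The value ${\rm dim}\,\mathfrak{Perm}_{(1,2)}=4$ still holds, since it comes from $\mathcal{C}_{04}$ alone.
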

\begin{proof} Every superalgebra of $\mathfrak{Perm}_{(1,2)}$ that is not listed above belongs to one of the components by the degenerations ${\rm S}_{01}^{t^{-1}}\to {\rm S}_{05}$, ${\rm S}_{05}\to {\rm R}_{04}^{0}$, ${\rm R}_{17}\to {\rm R}_{06}^{0}$, ${\rm S}_{01}^{\alpha}\to {\rm S}_{02}$ (see Tables~\ref{tab:deg12} and~\ref{tab:subdeg12}) and transitivity; note that if $A\in\mathfrak{Perm}_{(1,2)}$ and $A\to B$, then the whole closure $\overline{{\mathcal O}(A)}$ lies in $\mathfrak{Perm}_{(1,2)}$, so all the intermediate superalgebras of such a chain of degenerations also lie in $\mathfrak{Perm}_{(1,2)}$. The non-degenerations $A\not\to B$ between the superalgebras and families $A$, $B$ defining different components with ${\rm dim}\,A>{\rm dim}\,B$ are given in Tables~\ref{tab:nondeg12} and~\ref{tab:subnondeg12}; the other inclusions between the components are impossible by Section~\ref{sec:method}.
\end{proof}

\begin{corollary}\label{th:perm21} The variety $\mathfrak{Perm}_{(2,1)}$ has dimension $4$ and $7$ irreducible components:
$$
\begin{array}{@{}l@{\quad}l@{\quad}l@{\quad}l@{}}
\mathcal{C}_{01}=\overline{{\mathcal O}({\bf R}_{01})}; & \mathcal{C}_{03}=\overline{{\mathcal O}({\bf S}_{06})}; & \mathcal{C}_{05}=\overline{{\mathcal O}({\bf S}_{12})}; & \mathcal{C}_{07}=\overline{{\mathcal O}({\bf S}_{28})}.\\[4pt]
\mathcal{C}_{02}=\overline{{\mathcal O}({\bf R}_{02})}; & \mathcal{C}_{04}=\overline{{\mathcal O}({\bf S}_{11})}; & \mathcal{C}_{06}=\overline{{\mathcal O}({\bf S}_{22})};
\end{array}
$$
${\rm Rig}(\mathfrak{Perm}_{(2,1)})=\big\{{\bf R}_{01},\allowbreak\ {\bf R}_{02},\allowbreak\ {\bf S}_{06},\allowbreak\ {\bf S}_{11},\allowbreak\ {\bf S}_{12},\allowbreak\ {\bf S}_{22},\allowbreak\ {\bf S}_{28}\big\}.$
\end{corollary}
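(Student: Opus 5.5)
The plan follows the proof of Corollary~\ref{th:perm12}. By Corollary~\ref{cor:perm21}, the $\mathfrak{perm}$ superalgebras of type $(2,1)$ are the seven superalgebras of Proposition~\ref{AC(2,1)} and ${\bf S}_{06}$, ${\bf S}_{11}$, ${\bf S}_{12}$, ${\bf S}_{18}$, ${\bf S}_{20}$, ${\bf S}_{22}$, ${\bf S}_{24}$, ${\bf S}_{28}$, ${\bf S}_{29}$. First I compute ${\rm dim}\,\mathfrak{Der}_{\bar0}$ for each of them and use Lemma~\ref{lem_der} to get the orbit dimensions. From Theorem~B2, ${\bf R}_{01}$ and ${\bf S}_{22}$ have orbit dimension $4$. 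The multiplication tables of ${\bf S}_{06}$, ${\bf S}_{11}$, ${\bf S}_{12}$ are images of ${\bf S}_{08}$, ${\bf S}_{15}$, ${\bf S}_{16}$ under parameter-free degenerations, so their orbits have dimension at most $3$. I expect ${\bf R}_{02}$ and ${\bf S}_{28}$ to have orbit dimension $4$ or $3$. The main bookkeeping point is that the maximum is $4$, attained by ${\bf R}_{01}$, which gives ${\rm dim}\,\mathfrak{Perm}_{(2,1)}=4$.

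\textbf{Covering.} Every non-listed $\mathfrak{perm}$ superalgebra must lie in some component. I use the following degenerations and transitivity:
\begin{itemize}
\item ${\bf R}_{01}\to{\bf R}_{09}$, ${\bf R}_{01}\to{\bf R}_{17}$ and ${\bf R}_{02}\to{\bf R}_{15}$ (Table~\ref{tab:deg21});
\item ${\bf R}_{02}\to{\bf R}_{21}$ and ${\bf R}_{09}\to{\bf R}_{37}$ (Table~\ref{tab:subdeg21});
\item ${\bf S}_{22}\to{\bf S}_{18}$ (Table~\ref{tab:deg21}) and ${\bf S}_{22}\to{\bf S}_{20}$ (Table~\ref{tab:subdeg21});
\item ${\bf S}_{28}\to{\bf S}_{29}\to{\bf S}_{24}$ (Table~\ref{tab:subdeg21}).
\end{itemize}
All intermediate superalgebras stay in $\mathfrak{Perm}_{(2,1)}$, since this variety is closed and $G$-stable. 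This shows $\mathfrak{Perm}_{(2,1)}=\bigcup_i\mathcal{C}_i$.

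\textbf{Non-inclusions.} By Section~\ref{sec:method}, it suffices to prove $A\not\to B$ for pairs of listed generators with ${\rm dim}\,A>{\rm dim}\,B$. Table~\ref{tab:subnondeg21} supplies:
\begin{itemize}
\item ${\bf R}_{01}\not\to\{{\bf S}_{06},{\bf S}_{11},{\bf S}_{12}\}$;
\item ${\bf R}_{02}\not\to\{{\bf S}_{06},{\bf S}_{11},{\bf S}_{12}\}$;
\item ${\bf S}_{22}\not\to\{{\bf S}_{06},{\bf S}_{11},{\bf S}_{12}\}$;
\item ${\bf S}_{28}\not\to\{{\bf S}_{06},{\bf S}_{11},{\bf S}_{12}\}$.
\end{itemize}
The remaining pairs are those from a $4$-dimensional generator (${\bf R}_{01}$, or possibly ${\bf S}_{22}$) to a $3$-dimensional one among ${\bf R}_{02}$, ${\bf S}_{22}$, ${\bf S}_{28}$, depending on the dimensions computed above.

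\textbf{Main obstacle.} The hardest step is to rule out ${\bf R}_{01}\to{\bf R}_{02}$, ${\bf R}_{01}\to{\bf S}_{22}$, ${\bf R}_{01}\to{\bf S}_{28}$, and any similar pairs if the dimensions turn out equal. For these I would use simple invariants that are preserved under degeneration, in the sense that the closed $\mathcal{B}$-stable condition $\mathcal{R}$ satisfied by the source has no representative in the target:
\begin{itemize}
\item ${\bf R}_{01}$ and ${\bf R}_{02}$ are supercommutative, which is a closed condition, while ${\bf S}_{22}$ and ${\bf S}_{28}$ are not. Hence ${\bf R}_{01}\not\to{\bf S}_{22},{\bf S}_{28}$, and likewise ${\bf R}_{02}\not\to{\bf S}_{22},{\bf S}_{28}$.
\item In ${\bf R}_{01}$ the odd part annihilates everything ($A_3A=AA_3=0$ with $x_3=f_1$), while ${\bf R}_{02}$ has $e_1f_1=f_1\neq0$. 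Hence ${\bf R}_{01}\not\to{\bf R}_{02}$.
\item Between ${\bf S}_{22}$ and ${\bf S}_{28}$, and toward ${\bf R}_{02}$: ${\bf S}_{28}$ has $A_{\bar1}A_{\bar1}\neq0$ while ${\bf S}_{22}$ and ${\bf R}_{02}$ have $A_{\bar1}A_{\bar1}=0$, so ${\bf S}_{22},{\bf R}_{02}\not\to{\bf S}_{28}$.
\item Any remaining comparisons I would handle with the sets $\mathcal{R}$ already given for ${\bf S}_{22}$ and ${\bf S}_{28}$ in Table~\ref{tab:subnondeg21}, checking via Lemma~\ref{main} that the targets have no basis satisfying the conditions.
\end{itemize}

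With covering and all non-inclusions established, the seven closures are exactly the irreducible components. All seven generators are single orbits rather than families, so all seven are rigid.
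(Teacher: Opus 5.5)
Your proposal is correct and follows the paper's proof: it uses the same nine covering degenerations and the same four rows of Table~\ref{tab:subnondeg21} for the non-inclusions. The step you flag as the ``main obstacle'' is actually unnecessary. A direct derivation count gives ${\rm dim}\,{\mathcal O}=4$ for ${\bf R}_{01}$, ${\bf R}_{02}$, ${\bf S}_{22}$, ${\bf S}_{28}$ and ${\rm dim}\,{\mathcal O}=2$ for ${\bf S}_{06}$, ${\bf S}_{11}$, ${\bf S}_{12}$, so the equal-dimension argument of Section~\ref{sec:method} excludes all the other pairs, and your extra invariants (valid as they are) are not needed.
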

\begin{proof} Every superalgebra of $\mathfrak{Perm}_{(2,1)}$ that is not listed above belongs to one of the components by the degenerations \begin{center}
${\bf R}_{01}\to {\bf R}_{09}$, 
${\bf R}_{02}\to {\bf R}_{15}$, 
${\bf R}_{01}\to {\bf R}_{17}$, 
${\bf R}_{02}\to {\bf R}_{21}$, 
${\bf R}_{09}\to {\bf R}_{37}$,\\ 
${\bf S}_{22}\to {\bf S}_{18}$, 
${\bf S}_{22}\to {\bf S}_{20}$, 
${\bf S}_{28}\to {\bf S}_{29}$, 
${\bf S}_{29}\to {\bf S}_{24}$
\end{center} (see Tables~\ref{tab:deg21} and~\ref{tab:subdeg21}) and transitivity; note that if $A\in\mathfrak{Perm}_{(2,1)}$ and $A\to B$, then the whole closure $\overline{{\mathcal O}(A)}$ lies in $\mathfrak{Perm}_{(2,1)}$, so all the intermediate superalgebras of such a chain of degenerations also lie in $\mathfrak{Perm}_{(2,1)}$. The non-degenerations $A\not\to B$ between the superalgebras and families $A$, $B$ defining different components with ${\rm dim}\,A>{\rm dim}\,B$ are given in Tables~\ref{tab:nondeg21} and~\ref{tab:subnondeg21}; the other inclusions between the components are impossible by Section~\ref{sec:method}.
\end{proof}

\subsubsection{Associative superalgebras}
The associative superalgebras of types $(1,2)$ and $(2,1)$ are listed in Corollaries~\ref{cor:as12} and~\ref{cor:as21}.
\begin{corollary}\label{th:as12} The variety $\mathfrak{As}_{(1,2)}$ has dimension $4$ and $12$ irreducible components:
$$
\begin{array}{@{}l@{\quad}l@{\quad}l@{\quad}l@{}}
\mathcal{C}_{01}=\overline{{\mathcal O}({\rm R}_{19})}; & \mathcal{C}_{04}=\overline{{\mathcal O}({\rm S}_{03})}; & \mathcal{C}_{07}=\overline{{\mathcal O}({\rm S}_{11})}; & \mathcal{C}_{10}=\overline{{\mathcal O}({\rm S}_{14})};\\[4pt]
\mathcal{C}_{02}=\overline{{\mathcal O}({\rm R}_{24})}; & \mathcal{C}_{05}=\overline{\bigcup_{\alpha}{\mathcal O}({\rm S}_{08}^{\alpha})}; & \mathcal{C}_{08}=\overline{{\mathcal O}({\rm S}_{12})}; & \mathcal{C}_{11}=\overline{{\mathcal O}({\rm S}_{15})};\\[4pt]
\mathcal{C}_{03}=\overline{\bigcup_{\alpha}{\mathcal O}({\rm S}_{01}^{\alpha})}; & \mathcal{C}_{06}=\overline{{\mathcal O}({\rm S}_{10})}; & \mathcal{C}_{09}=\overline{{\mathcal O}({\rm S}_{13})}; & \mathcal{C}_{12}=\overline{{\mathcal O}({\rm S}_{16})}.
\end{array}
$$
${\rm Rig}(\mathfrak{As}_{(1,2)})=\big\{{\rm R}_{19},\allowbreak\ {\rm R}_{24},\allowbreak\ {\rm S}_{03},\allowbreak\ {\rm S}_{10},\allowbreak\ {\rm S}_{11},\allowbreak\ {\rm S}_{12},\allowbreak\ {\rm S}_{13},\allowbreak\ {\rm S}_{14},\allowbreak\ {\rm S}_{15},\allowbreak\ {\rm S}_{16}\big\}.$
\end{corollary}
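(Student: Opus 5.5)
By Corollary~\ref{cor:as12}, the associative superalgebras of type $(1,2)$ are exactly those of Proposition~\ref{AC(1,2)} together with the superalgebras of Theorem~A1 other than ${\rm S}_{06}$, ${\rm S}_{07}$ and ${\rm S}_{09}^{\alpha}$. I will argue as in the proofs of Corollaries~\ref{th:perm12} and~\ref{th:perm21}. First I record the orbit dimensions of the candidate generators:
\begin{itemize}
\item dimension $4$: the family $\{{\rm S}_{01}^{\alpha}\}$ and the orbit of ${\rm S}_{03}$;
\item dimension $3$: ${\rm S}_{10},\dots,{\rm S}_{14}$;
\item dimension $1$: ${\rm R}_{19}$, ${\rm R}_{24}$, ${\rm S}_{15}$, ${\rm S}_{16}$.
\end{itemize}
These are taken from the proof of Theorem~B1. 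The family $\{{\rm S}_{08}^{\alpha}\}$ also has to be handled. Its members have orbits of dimension $2$ (the automorphism group of ${\rm A}_{06}$ is $3$-dimensional and stabilizes $\theta$, so $\mathfrak{Der}_{\bar0}$ is $3$-dimensional), so the closure of the family has dimension $3$.

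\textbf{Covering.} Each associative superalgebra not in the list lies in one of the twelve closures.
\begin{itemize}
\item ${\rm S}_{02}$ and ${\rm S}_{05}$ come from $\{{\rm S}_{01}^{\alpha}\}$ by Table~\ref{tab:deg12}, and then ${\rm S}_{05}\to{\rm R}_{04}^{0}$.
\item ${\rm S}_{04}$ and ${\rm R}_{17}$ come from ${\rm S}_{03}$, and then ${\rm R}_{17}\to{\rm R}_{06}^{0}$.
\end{itemize}
This covers everything outside the list. As in the earlier proofs, all intermediate superalgebras in these chains are associative, because the closure of an orbit inside $\mathfrak{As}_{(1,2)}$ stays inside $\mathfrak{As}_{(1,2)}$.

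\textbf{Non-inclusion.} By Section~\ref{sec:method}, it suffices to prove $A\not\to B$ for pairs of generators with ${\rm dim}\,A>{\rm dim}\,B$. Table~\ref{tab:nondeg12} already gives:
\begin{itemize}
\item ${\rm S}_{03}$ and $\{{\rm S}_{01}^{\alpha}\}$ do not degenerate to ${\rm R}_{19}$, ${\rm R}_{24}$, ${\rm S}_{10},\dots,{\rm S}_{16}$;
\item each of ${\rm S}_{10},\dots,{\rm S}_{14}$ does not degenerate to ${\rm R}_{19}$, ${\rm R}_{24}$, ${\rm S}_{15}$, ${\rm S}_{16}$.
\end{itemize}
The pairs involving ${\rm S}_{08}^{\alpha}$ are covered by Table~\ref{tab:subnondeg12}:
\begin{itemize}
\item $\{{\rm S}_{01}^{\alpha}\}\not\to{\rm S}_{08}^{\beta}$ and ${\rm S}_{03}\not\to{\rm S}_{08}^{\beta}$, so the closure of the family is not inside the two $4$-dimensional components;
\item $\{{\rm S}_{08}^{\alpha}\}\not\to{\rm R}_{19},{\rm R}_{24},{\rm S}_{15},{\rm S}_{16}$.
\end{itemize}

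The main obstacle is the pairs of equal dimension $3$, namely $\{{\rm S}_{08}^{\alpha}\}$ against ${\rm S}_{10},\dots,{\rm S}_{14}$. For these I use the argument of Section~\ref{sec:method}. An inclusion between two irreducible closed sets of the same dimension forces equality. Equality is impossible here, because the dense sets defining the two closures are disjoint unions of orbits of non-isomorphic superalgebras. Hence no component contains another, the twelve listed closures are exactly the irreducible components, and the variety has dimension $4$.

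For the rigid superalgebras, each component generated by a single orbit contributes its generator. The two family components, $\{{\rm S}_{01}^{\alpha}\}$ and $\{{\rm S}_{08}^{\alpha}\}$, contribute no rigid superalgebra. This gives the stated set ${\rm Rig}(\mathfrak{As}_{(1,2)})$.
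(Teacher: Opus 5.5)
Your proposal is correct and follows essentially the paper's route: you cover the non-generic associative superalgebras by degenerations from Tables~\ref{tab:deg12} and~\ref{tab:subdeg12}, you cite Tables~\ref{tab:nondeg12} and~\ref{tab:subnondeg12} for the pairs with ${\rm dim}\,A>{\rm dim}\,B$, and you use the equal-dimension argument of Section~\ref{sec:method} for the remaining pairs. The only difference is that you reach ${\rm R}_{06}^{0}$ via ${\rm S}_{03}\to{\rm R}_{17}\to{\rm R}_{06}^{0}$ rather than the paper's ${\rm S}_{08}^{t^{-1}}\to{\rm R}_{06}^{0}$, and this is equally valid since both chains consist of degenerations listed in the tables.
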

\begin{proof} Every superalgebra of $\mathfrak{As}_{(1,2)}$ that is not listed above belongs to one of the components by the degenerations 
\begin{center}
${\rm S}_{01}^{t^{-1}}\to {\rm S}_{05}$, ${\rm S}_{05}\to {\rm R}_{04}^{0}$, ${\rm S}_{08}^{t^{-1}}\to {\rm R}_{06}^{0}$, ${\rm S}_{03}\to {\rm R}_{17}$, ${\rm S}_{01}^{\alpha}\to {\rm S}_{02}$, ${\rm S}_{03}\to {\rm S}_{04}$\end{center} (see Tables~\ref{tab:deg12} and~\ref{tab:subdeg12}) and transitivity; note that if $A\in\mathfrak{As}_{(1,2)}$ and $A\to B$, then the whole closure $\overline{{\mathcal O}(A)}$ lies in $\mathfrak{As}_{(1,2)}$, so all the intermediate superalgebras of such a chain of degenerations also lie in $\mathfrak{As}_{(1,2)}$. The non-degenerations $A\not\to B$ between the superalgebras and families $A$, $B$ defining different components with ${\rm dim}\,A>{\rm dim}\,B$ are given in Tables~\ref{tab:nondeg12} and~\ref{tab:subnondeg12}; the other inclusions between the components are impossible by Section~\ref{sec:method}.
\end{proof}

\begin{corollary}\label{th:as21} The variety $\mathfrak{As}_{(2,1)}$ has dimension $5$ and $13$ irreducible components:
$$
\begin{array}{@{}l@{\quad}l@{\quad}l@{\quad}l@{\quad}l@{}}
\mathcal{C}_{01}=\overline{{\mathcal O}({\bf R}_{01})}; & \mathcal{C}_{04}=\overline{{\mathcal O}({\bf S}_{09})}; & \mathcal{C}_{07}=\overline{{\mathcal O}({\bf S}_{12})}; & \mathcal{C}_{10}=\overline{{\mathcal O}({\bf S}_{23})}; & \mathcal{C}_{13}=\overline{{\mathcal O}({\bf S}_{31})}.\\[4pt]
\mathcal{C}_{02}=\overline{{\mathcal O}({\bf S}_{05})}; & \mathcal{C}_{05}=\overline{{\mathcal O}({\bf S}_{10})}; & \mathcal{C}_{08}=\overline{{\mathcal O}({\bf S}_{17})}; & \mathcal{C}_{11}=\overline{{\mathcal O}({\bf S}_{27})};\\[4pt]
\mathcal{C}_{03}=\overline{{\mathcal O}({\bf S}_{06})}; & \mathcal{C}_{06}=\overline{{\mathcal O}({\bf S}_{11})}; & \mathcal{C}_{09}=\overline{{\mathcal O}({\bf S}_{22})}; & \mathcal{C}_{12}=\overline{{\mathcal O}({\bf S}_{30})};
\end{array}
$$
${\rm Rig}(\mathfrak{As}_{(2,1)})=\big\{{\bf R}_{01},\allowbreak\ {\bf S}_{05},\allowbreak\ {\bf S}_{06},\allowbreak\ {\bf S}_{09},\allowbreak\ {\bf S}_{10},\allowbreak\ {\bf S}_{11},\allowbreak\ {\bf S}_{12},\allowbreak\ {\bf S}_{17},\allowbreak\ {\bf S}_{22},\allowbreak\ {\bf S}_{23},\allowbreak\ {\bf S}_{27},\allowbreak\ {\bf S}_{30},\allowbreak\ {\bf S}_{31}\big\}.$
\end{corollary}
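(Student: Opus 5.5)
The plan follows the proofs of Corollaries~\ref{th:perm21} and~\ref{th:as12}. First, list the members of $\mathfrak{As}_{(2,1)}$ from Corollary~\ref{cor:as21}: all of Proposition~\ref{AC(2,1)} together with ${\bf S}_{05},{\bf S}_{06},{\bf S}_{09},{\bf S}_{10},{\bf S}_{11},{\bf S}_{12},{\bf S}_{17},\dots,{\bf S}_{31}$. Then show that every superalgebra not among the thirteen listed ones lies in one of the proposed closures, using degenerations between associative superalgebras from Tables~\ref{tab:deg21} and~\ref{tab:subdeg21}:
\begin{center}
${\bf R}_{01}\to{\bf R}_{09}\to{\bf R}_{37}$, ${\bf R}_{01}\to{\bf R}_{17}$, ${\bf S}_{27}\to{\bf R}_{02}$, ${\bf R}_{02}\to{\bf R}_{15}$, ${\bf R}_{02}\to{\bf R}_{21}$,\\
${\bf S}_{22}\to{\bf S}_{18}$, ${\bf S}_{22}\to{\bf S}_{20}$, ${\bf S}_{23}\to{\bf S}_{19}$, ${\bf S}_{17}\to{\bf S}_{21}$,\\
${\bf S}_{27}\to{\bf S}_{25},{\bf S}_{26},{\bf S}_{28}$, ${\bf S}_{28}\to{\bf S}_{29}\to{\bf S}_{24}$.
\end{center}
Combined with transitivity, these cover everything. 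As in the earlier proofs, note that if $A\in\mathfrak{As}_{(2,1)}$ and $A\to B$, then the whole closure $\overline{{\mathcal O}(A)}$ lies in $\mathfrak{As}_{(2,1)}$. This already gives $\mathfrak{As}_{(2,1)}=\bigcup_i\mathcal{C}_i$.

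Next, compute orbit dimensions via Lemma~\ref{lem_der}. The values ${\rm dim}\,{\mathcal O}({\bf S}_{27})=5$ and ${\rm dim}\,{\mathcal O}({\bf R}_{01})={\rm dim}\,{\mathcal O}({\bf S}_{17})={\rm dim}\,{\mathcal O}({\bf S}_{22})={\rm dim}\,{\mathcal O}({\bf S}_{23})=4$ are known from Theorem~B2. It remains to compute the even derivations of ${\bf S}_{05},{\bf S}_{06},{\bf S}_{09},{\bf S}_{10},{\bf S}_{11},{\bf S}_{12},{\bf S}_{30},{\bf S}_{31}$. These are the degenerations of the rigid ${\bf S}_{07},{\bf S}_{08},{\bf S}_{13},\dots,{\bf S}_{16},{\bf S}_{32},{\bf S}_{33}$, which have orbit dimension $3$, so I expect dimension $2$ for these eight. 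The variety then has dimension $5$.

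By Section~\ref{sec:method}, the only non-degenerations needed are $A\not\to B$ with ${\rm dim}\,A>{\rm dim}\,B$. For these I use Table~\ref{tab:subnondeg21}, which gives
\begin{itemize}
\item ${\bf S}_{27}\not\to$ all of the eight small ones;
\item ${\bf R}_{01},{\bf S}_{17},{\bf S}_{22},{\bf S}_{23}\not\to$ all of them as well.
\end{itemize}
Table~\ref{tab:nondeg21} gives ${\bf S}_{27}\not\to{\bf R}_{01},{\bf S}_{17},{\bf S}_{22},{\bf S}_{23}$. Among equal-dimensional algebras, no inclusion of closures is possible.

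The main obstacle is the dimension bookkeeping: I must confirm that each of the eight algebras of orbit dimension $2$ genuinely has that dimension, and that nothing in $\mathfrak{As}_{(2,1)}$ of equal dimension degenerates to it. If a derivation computation gave a different value, the argument of Section~\ref{sec:method} would need an extra non-degeneration among the small algebras. These are then resolved by $\mathcal{R}$-sets of the same shape as Table~\ref{tab:subnondeg21}, for example $c_{12}^{1}=c_{21}^{1}$ and $c_{12}^{2}=c_{21}^{2}$.
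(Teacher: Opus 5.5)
Your proposal is correct and follows the paper's proof essentially verbatim. It covers the non-listed algebras by degenerations from Tables~\ref{tab:deg21} and~\ref{tab:subdeg21}, using ${\bf S}_{22}\to{\bf S}_{20}$ and ${\bf S}_{28}\to{\bf S}_{29}$ where the paper uses ${\bf S}_{17}\to{\bf S}_{20}$ and ${\bf S}_{25}\to{\bf S}_{29}$ (both choices are valid), and it takes exactly the non-degenerations of Tables~\ref{tab:nondeg21} and~\ref{tab:subnondeg21}. The dimension check you flag does go through: for each of the eight small algebras the even derivations are exactly $e_1\mapsto ae_1$, $e_2\mapsto ce_1$, $f_1\mapsto gf_1$, so every such orbit has dimension $2$. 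A minor slip in your motivation is that ${\bf S}_{32},{\bf S}_{33}$ are not rigid in $\mathfrak{SA}_{(2,1)}$, being degenerations of ${\bf S}_{03},{\bf S}_{04}$; they merely have orbit dimension $3$.
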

\begin{proof} Every superalgebra of $\mathfrak{As}_{(2,1)}$ that is not listed above belongs to one of the components by the degenerations 
\begin{center}
${\bf S}_{27}\to {\bf R}_{02}$, ${\bf R}_{01}\to {\bf R}_{09}$, ${\bf R}_{02}\to {\bf R}_{15}$, ${\bf R}_{01}\to {\bf R}_{17}$, ${\bf R}_{02}\to {\bf R}_{21}$, ${\bf R}_{09}\to {\bf R}_{37}$, ${\bf S}_{22}\to {\bf S}_{18}$, ${\bf S}_{23}\to {\bf S}_{19}$, ${\bf S}_{17}\to {\bf S}_{20}$, ${\bf S}_{17}\to {\bf S}_{21}$, ${\bf S}_{27}\to {\bf S}_{25}$, ${\bf S}_{25}\to {\bf S}_{29}$, ${\bf S}_{29}\to {\bf S}_{24}$, ${\bf S}_{27}\to {\bf S}_{26}$, ${\bf S}_{27}\to {\bf S}_{28}$
\end{center} (see Tables~\ref{tab:deg21} and~\ref{tab:subdeg21}) and transitivity; note that if $A\in\mathfrak{As}_{(2,1)}$ and $A\to B$, then the whole closure $\overline{{\mathcal O}(A)}$ lies in $\mathfrak{As}_{(2,1)}$, so all the intermediate superalgebras of such a chain of degenerations also lie in $\mathfrak{As}_{(2,1)}$. The non-degenerations $A\not\to B$ between the superalgebras and families $A$, $B$ defining different components with ${\rm dim}\,A>{\rm dim}\,B$ are given in Tables~\ref{tab:nondeg21} and~\ref{tab:subnondeg21}; the other inclusions between the components are impossible by Section~\ref{sec:method}.
\end{proof}

\subsubsection{Assosymmetric superalgebras}
By the corollaries of Section~\ref{sec:cor}, the assosymmetric superalgebras of type $(1,2)$ are exactly the associative ones; hence $\mathfrak{AsSym}_{(1,2)}=\mathfrak{As}_{(1,2)}$ and its irreducible components are given in Corollary~\ref{th:as12}. The non-associative assosymmetric superalgebras of type $(2,1)$ are ${\bf S}_{07}$, ${\bf S}_{08}$, ${\bf S}_{13}$, ${\bf S}_{14}$, ${\bf S}_{15}$, ${\bf S}_{16}$, ${\bf S}_{32}$, ${\bf S}_{33}$.
\begin{corollary}\label{th:asy21} The variety $\mathfrak{AsSym}_{(2,1)}$ has dimension $5$ and $13$ irreducible components:
$$
\begin{array}{@{}l@{\quad}l@{\quad}l@{\quad}l@{\quad}l@{}}
\mathcal{C}_{01}=\overline{{\mathcal O}({\bf R}_{01})}; & \mathcal{C}_{04}=\overline{{\mathcal O}({\bf S}_{13})}; & \mathcal{C}_{07}=\overline{{\mathcal O}({\bf S}_{16})}; & \mathcal{C}_{10}=\overline{{\mathcal O}({\bf S}_{23})}; & \mathcal{C}_{13}=\overline{{\mathcal O}({\bf S}_{33})}.\\[4pt]
\mathcal{C}_{02}=\overline{{\mathcal O}({\bf S}_{07})}; & \mathcal{C}_{05}=\overline{{\mathcal O}({\bf S}_{14})}; & \mathcal{C}_{08}=\overline{{\mathcal O}({\bf S}_{17})}; & \mathcal{C}_{11}=\overline{{\mathcal O}({\bf S}_{27})};\\[4pt]
\mathcal{C}_{03}=\overline{{\mathcal O}({\bf S}_{08})}; & \mathcal{C}_{06}=\overline{{\mathcal O}({\bf S}_{15})}; & \mathcal{C}_{09}=\overline{{\mathcal O}({\bf S}_{22})}; & \mathcal{C}_{12}=\overline{{\mathcal O}({\bf S}_{32})};
\end{array}
$$
${\rm Rig}(\mathfrak{AsSym}_{(2,1)})=\big\{{\bf R}_{01},\allowbreak\ {\bf S}_{07},\allowbreak\ {\bf S}_{08},\allowbreak\ {\bf S}_{13},\allowbreak\ {\bf S}_{14},\allowbreak\ {\bf S}_{15},\allowbreak\ {\bf S}_{16},\allowbreak\ {\bf S}_{17},\allowbreak\ {\bf S}_{22},\allowbreak\ {\bf S}_{23},\allowbreak\ {\bf S}_{27},\allowbreak\ {\bf S}_{32},\allowbreak\ {\bf S}_{33}\big\}.$
\end{corollary}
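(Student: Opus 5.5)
The variety $\mathfrak{AsSym}_{(2,1)}$ consists of the associative superalgebras of type $(2,1)$ from Corollary~\ref{cor:as21} together with the eight non-associative ones ${\bf S}_{07},{\bf S}_{08},{\bf S}_{13},{\bf S}_{14},{\bf S}_{15},{\bf S}_{16},{\bf S}_{32},{\bf S}_{33}$. I would argue in the same way as for Corollary~\ref{th:as21}: first show that the thirteen orbit closures cover the variety, and then show that none of them is contained in another.

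\textbf{Covering.} Start from the components of $\mathfrak{As}_{(2,1)}$ in Corollary~\ref{th:as21}. Of these, ${\bf R}_{01},{\bf S}_{17},{\bf S}_{22},{\bf S}_{23},{\bf S}_{27}$ are kept. The others are absorbed by degenerations from the new assosymmetric superalgebras, all taken from Tables~\ref{tab:deg21} and~\ref{tab:subdeg21}:
\[
{\bf S}_{07}\to{\bf S}_{05},\quad {\bf S}_{08}\to{\bf S}_{06},\quad {\bf S}_{13}\to{\bf S}_{09},\quad {\bf S}_{14}\to{\bf S}_{10},
\]
\[
{\bf S}_{15}\to{\bf S}_{11},\quad {\bf S}_{16}\to{\bf S}_{12},\quad {\bf S}_{33}\to{\bf S}_{30},\quad {\bf S}_{32}\to{\bf S}_{31}.
\]
The remaining associative superalgebras are covered by the chains already used in the proof of Corollary~\ref{th:as21}. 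The new algebras ${\bf S}_{07},\dots,{\bf S}_{33}$ are themselves the defining algebras of components. As before, $\mathfrak{AsSym}_{(2,1)}$ is closed and $G$-stable, so every intermediate superalgebra in these chains lies in it.

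\textbf{Non-inclusions.} By Section~\ref{sec:method}, it is enough to prove $A\not\to B$ for pairs with ${\rm dim}\,\mathcal{O}(A)>{\rm dim}\,\mathcal{O}(B)$. The orbit dimensions of ${\bf R}_{01},{\bf S}_{07},\dots,{\bf S}_{27}$ are listed in the proof of Theorem~B2. For ${\bf S}_{32}$ and ${\bf S}_{33}$ I would compute $\mathfrak{Der}_{\bar0}$ and apply Lemma~\ref{lem_der}. Since ${\bf S}_{32}\to{\bf S}_{31}$ and ${\bf S}_{33}\to{\bf S}_{30}$ are proper degenerations, I expect orbit dimension $3$, which ties them with ${\bf S}_{07},\dots,{\bf S}_{16}$.

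Granting these dimensions, Table~\ref{tab:nondeg21} already gives what is needed from ${\bf S}_{27}$ and from the $4$-dimensional algebras ${\bf R}_{01},{\bf S}_{17},{\bf S}_{22},{\bf S}_{23}$ towards ${\bf S}_{07},{\bf S}_{08},{\bf S}_{13},\dots,{\bf S}_{16}$. Table~\ref{tab:subnondeg21} supplies the non-degenerations towards ${\bf S}_{32},{\bf S}_{33}$ from ${\bf R}_{01},{\bf S}_{17},{\bf S}_{22},{\bf S}_{23},{\bf S}_{27}$. Finally, ${\bf S}_{27}\not\to{\bf R}_{01},{\bf S}_{17},{\bf S}_{22},{\bf S}_{23}$ is again in Table~\ref{tab:nondeg21}. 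Pairs of equal orbit dimension need no argument.

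\textbf{Main obstacle.} The main risk is the dimension check for ${\bf S}_{32}$ and ${\bf S}_{33}$. If either orbit turned out to have dimension $4$, new non-degenerations would be required, for instance ${\bf S}_{32}\not\to{\bf S}_{07},\dots,{\bf S}_{16}$. In that case I would look for a $\mathcal{B}$-stable set $\mathcal{R}$ of the kind used in Tables~\ref{tab:nondeg21} and~\ref{tab:subnondeg21}, with ${\bf S}_{32}$ in $\mathcal{R}$ and the targets outside it.

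For the rigid list: each component is the closure of a single orbit, so all thirteen defining algebras are rigid. Their largest orbit, that of ${\bf S}_{27}$, has dimension $5$, which gives ${\rm dim}\,\mathfrak{AsSym}_{(2,1)}=5$.
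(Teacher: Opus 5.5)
Your proposal is correct and follows the paper's proof. You cover the variety by the degenerations of Tables~\ref{tab:deg21} and~\ref{tab:subdeg21}; the paper lists them all at once, while you route them through the components of $\mathfrak{As}_{(2,1)}$. You then handle non-inclusions by the non-degenerations of Tables~\ref{tab:nondeg21} and~\ref{tab:subnondeg21} for pairs of different orbit dimension, exactly as the paper does. The dimension check you flag comes out as you expect, which the paper uses without stating it: $\mathfrak{Der}_{\bar0}({\bf S}_{32})$ and $\mathfrak{Der}_{\bar0}({\bf S}_{33})$ are both spanned by the map $e_2\mapsto e_1$ (killing $e_1,f_1$) and the map $f_1\mapsto f_1$ (killing $e_1,e_2$), so both orbits have dimension $3$.
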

\begin{proof} Every superalgebra of $\mathfrak{AsSym}_{(2,1)}$ that is not listed above belongs to one of the components by the degenerations \begin{center}
${\bf S}_{27}\to {\bf R}_{02}$, ${\bf R}_{01}\to {\bf R}_{09}$, ${\bf R}_{02}\to {\bf R}_{15}$, ${\bf R}_{01}\to {\bf R}_{17}$, ${\bf R}_{02}\to {\bf R}_{21}$, ${\bf R}_{09}\to {\bf R}_{37}$, ${\bf S}_{07}\to {\bf S}_{05}$, ${\bf S}_{08}\to {\bf S}_{06}$, ${\bf S}_{13}\to {\bf S}_{09}$, ${\bf S}_{14}\to {\bf S}_{10}$, ${\bf S}_{15}\to {\bf S}_{11}$, ${\bf S}_{16}\to {\bf S}_{12}$, ${\bf S}_{22}\to {\bf S}_{18}$, ${\bf S}_{23}\to {\bf S}_{19}$, ${\bf S}_{17}\to {\bf S}_{20}$, ${\bf S}_{17}\to {\bf S}_{21}$, ${\bf S}_{27}\to {\bf S}_{25}$, ${\bf S}_{25}\to {\bf S}_{29}$, ${\bf S}_{29}\to {\bf S}_{24}$, ${\bf S}_{27}\to {\bf S}_{26}$, ${\bf S}_{27}\to {\bf S}_{28}$, ${\bf S}_{33}\to {\bf S}_{30}$, ${\bf S}_{32}\to {\bf S}_{31}$ 
\end{center}(see Tables~\ref{tab:deg21} and~\ref{tab:subdeg21}) and transitivity; note that if $A\in\mathfrak{AsSym}_{(2,1)}$ and $A\to B$, then the whole closure $\overline{{\mathcal O}(A)}$ lies in $\mathfrak{AsSym}_{(2,1)}$, so all the intermediate superalgebras of such a chain of degenerations also lie in $\mathfrak{AsSym}_{(2,1)}$. The non-degenerations $A\not\to B$ between the superalgebras and families $A$, $B$ defining different components with ${\rm dim}\,A>{\rm dim}\,B$ are given in Tables~\ref{tab:nondeg21} and~\ref{tab:subnondeg21}; the other inclusions between the components are impossible by Section~\ref{sec:method}.
\end{proof}





\end{document}